\newcommand*{\mailto}[1]{\href{mailto:#1}{\nolinkurl{#1}}}
\definecolor{darkgreen}{rgb}{0.5,0.25,0}
\definecolor{darkblue}{rgb}{0,0,1}
\definecolor{answerblue}{rgb}{0,0,0.75}
\newcommand{\ep}{\varepsilon}
\newcommand{\eps}{\varepsilon}
\newcommand{\px}{\partial_x}
\newcommand{\pd}{\partial}
\newcommand{\LL}{\langle}
\newcommand{\RR}{\rangle}
\newcommand{\Ex}{\mathbb{E}}
\renewcommand{\d}{\mathrm{d}}
\newcommand{\sgn}{\mathrm{sgn}}
\newcommand{\supp}{\mathrm{supp}}
\newcommand{\loc}{\mathrm{loc}}
\newcommand{\Z}{\mathfrak{Z}}
\newcommand{\dott}{\, \cdot\,}
\newcommand{\KK}{\bm{\sigma}}
\newcommand{\JJ}{\mathbf{j}^\ep}
\newcommand{\XXX}{X}
\DeclareMathOperator*{\esssup}{ess\,sup}
\newcommand{\marginlabel}[1]
{\mbox{}\marginpar{\raggedleft\hspace{0pt}\tiny{\textcolor{red}{#1}}}
}
\newcommand{\R}{\mathbb{R}}
\newcommand{\pt}{\partial_t}
\newtheorem{theorem}{Theorem}[section]
\newtheorem{proposition}[theorem]{Proposition}
\newtheorem{lemma}[theorem]{Lemma}
\newtheorem{definition}[theorem]{Definition}
\newtheorem{corollary}[theorem]{Corollary}
\theoremstyle{definition}
\newtheorem{remark}[theorem]{Remark}
\numberwithin{equation}{section}     
\title[Hunter--Saxton Equation with Noise]
{The Hunter--Saxton Equation with Noise}
\author[H. Holden]{Helge Holden}
\address[H. Holden]{Department of Mathematical Sciences\\
  NTNU Norwegian University of Science and Technology\\
  NO-7491 Trondheim\\ Norway}
\email{\mailto{helge.holden@ntnu.no}}
\urladdr{\url{https://www.ntnu.edu/employees/holden}}
\author[K. H. Karlsen]{Kenneth H. Karlsen}
\address[K. H. Karlsen]{Department of Mathematics\\
   University of Oslo\\
  NO-0316 Oslo\\ Norway}
\email{\mailto{kennethk@math.uio.no}}
\author[P.H.C. Pang]{Peter H.C. Pang}
\address[P.H.C. Pang]{Department of Mathematical Sciences\\
  NTNU Norwegian University of Science and Technology\\
  NO-7491 Trondheim\\ Norway}
\email{\mailto{peter.pang@ntnu.no}}
\keywords{stochastic solutions, Hunter--Saxton equation, nonlocal wave equations, wave-breaking, well-posedness, characteristics}
\subjclass[2010]{35A01,35L60,35R60,60H15}
\date{\today}
\thanks{This research was jointly and partially supported 
by the Research Council of Norway Toppforsk 
project {\em Waves and Nonlinear Phenomena} (250070) 
and the Research Council of Norway project 
{\em Stochastic Conservation Laws} (250674/F20).}
\begin{document}
\begin{abstract}
In this paper we develop an existence theory for the 
Cauchy problem to the stochastic Hunter--Saxton equation
 \eqref{eq:sHS1}, and prove several properties of 
 the blow-up of its solutions. An important part of
  the paper is the continuation of solutions to the
   stochastic equations beyond blow-up (wave-breaking).
    In the linear noise case, using the method of 
    (stochastic) characteristics, we also study 
    random wave-breaking and stochastic effects 
    unobserved in the deterministic problem. Notably,
     we derive an explicit law for the random 
     wave-breaking time.
\end{abstract}
\maketitle

\setcounter{tocdepth}{1}
\tableofcontents
\section{Introduction}

We consider the Hunter--Saxton equation \cite{MR1135995} with noise:
\begin{equation}\label{eq:sHS1}
\begin{aligned}
\pd_tq + \pd_x (u q) +\pd_x (\sigma q) \circ \dot{W} &= \frac{1}{2}q^2,\\
\pd_x u&= q.
\end{aligned}
\end{equation}
Here evolution occurs on $[0,T]\times \R$, and over
 the stochastic basis $(\Omega, \mathscr{F},\{\mathscr{F}_t\}_{t \ge 0}, \mathbb{P})$,
 the process $W$ is a standard one-dimensional 
 Brownian motion and $\circ$ denotes Stratonovich multiplication. We also point out that in this
 paper we ultimately limit ourselves to the assumption
 that $ \sigma = \sigma(x)$ is {\em linear}. 
 This assumption simplifies the analysis considerably,
 but still allows the equation  to manifest
 some stochastic effects. 
 The Cauchy problem is posed with an initial 
 condition $q|_{t=0}=q_0 \in L^1(\R) \cap L^2(\R)$.
 
 Other stochastic versions of the stochastic Hunter--Saxton equation
 exist, see \cite{MR3434476,MR3688413}, where the noise is introduced as a source term.

In the It\^o formulation the stochastic Hunter--Saxton equation reads:
\begin{align}
\pd_t q + \pd_x(u  q) + \pd_x (\sigma q) \dot{W} - \frac{1}{2} \pd_x(\sigma \pd_x(\sigma q) )&= \frac{1}{2} q^2.
\label{eq:ito_formulation}
\end{align}

The primary aim of this paper is to develop an existence
 theory for the stochastic Hunter--Saxton equation 
 under the assumptions above. 
 Our main theorem is Theorem~\ref{thm:main_existence}, stating that the equation \eqref{eq:sHS1} has both conservative and
 dissipative global solutions when $\sigma$ is linear. (The notions of conservative and dissipative solutions are discussed below.)
 
Our line of attack relies on the method of characteristics.
 Stochastic characteristics are used widely in the
  analysis of transport type equations in fluid dynamics
   and other applications (see \cite{MR2593276} and 
   \cite[Ch. 4]{MR2796837} and references there), 
   where corresponding deterministic dynamics are
    perturbed by introducing noise to the characteristics.
 As explained in Appendix \ref{sec:variation_hamiltonian}, the physical relevance of this noise derives
 from its being a perturbation on the 
associated Hamiltonian of the system, following a 
discussion in \cite{MR3488697} for stochastic soliton
 dynamics, so that the resulting equation follows 
from a variational principle applied to the 
stochastically perturbed Hamiltonian. 

The method of characteristics as applied to \eqref{eq:sHS1},
departs from the regime treated by \cite{MR2593276}, however, 
as the transport term depends on the solution. 
 This type of equation also falls outside the scope of the
 related investigation \cite{MR3927370}, which extended \cite{MR2593276}
 in their use of the kinetic formulation. 
 The non-locality  of the dynamics of \eqref{eq:sHS1}
  means that the transport term depends not only on the values of 
the solution at a point, but on the integral thereof,
 precluding a ``kinetic'' treatment of well-posedness.
 A substantial part of this work will be devoted to
 showing that the characteristics can be extended 
beyond a blow-up that inevitably happens, 
also in the deterministic case. 
This blow-up, termed ``wave-breaking'', 
is explained in Section \ref{sec:deterministic_background} below.

 It turns out that on properly defined characteristics,
 it is possible to derive explicit solutions. As we
 are employing characteristics and solving equations
 on characteristics, it is also imperative that we
 reconcile ``solutions-along-characteristics'' with
 solutions as usually defined, and which reduces to
 the familiar weak solutions \cite{MR1361013} in the 
deterministic case $\sigma = 0$. Relying on this 
explicit representation of solutions on characteristics,
 along the way we shall develop other aspects of 
the phenomenology for various solutions to these 
equations, including a connection between the 
distribution of blow-up times and exponential 
Brownian processes.

The organisation of this paper is as follows: In 
the remainder of this section, we describe the 
deterministic theory both to develop intuition about
 the dynamics of the stochastic Hunter--Saxton equation,
 and to give ourselves a template by which to 
understand corresponding features of the stochastic
 dynamics. Some pertinent calculations in the
 deterministic theory have been relegated to
 Appendix \ref{sec:appendixB}. Physical arguments 
behind our particular choice of the noise, which 
suggest that the case we consider is of 
physical relevance, are contained in Appendix \ref{sec:variation_hamiltonian}.

In the next section we give precise definitions of
 solutions, and state a-priori bounds. These bounds
 are proven in Appendix \ref{sec:aprioribounds}.
 In Section \ref{sec:lagrange}, we set up the method of
 characteristics framework used in subsequent 
sections. In particular, we show {\em how} the 
quantity $q$ experiences  finite-time blow-up in 
$L^\infty$. We also describe how this blowup in $q$
 is reflected by the behaviour of the evolution of
 its antiderivative, $u$. In Section \ref{sec:linear} we
 specialise to the case $\sigma'' \equiv 0$. We derive an explicit
 distribution for the wave-breaking stopping time in certain cases, and
 describe how characteristics behave up to the blow-up
 of $q$. In Section~\ref{sec:existence} we first describe
 strategies to continue characteristics and solutions
 beyond blow-up. We then prove global well-posedness
 of characteristics and well-posedness of solutions
 defined along characteristics, first on special 
initial data for clarity, before extending this 
 to general data in $L^1(\R) \cap L^2(\R)$ in 
Section \ref{sec:pbt}. Finally in Section \ref{sec:characteristics_sols},
 we reconcile various notions of solutions that we
 use in the article and show that the solutions 
defined along characteristics are included in more
 traditional partial differential equation-type (PDE-type) weak solutions. We postpone 
details of discussions on uniqueness and maximal 
dissipation that we shall mention in passing in Sections \ref{sec:setup}
 and \ref{sec:characteristics_sols} to  upcoming work.

\subsection{Background and the deterministic setting}\label{sec:deterministic_background}


We shall provide here a rough sketch of the 
deterministic theory of the Hunter--Saxton equation
 by which our intuitions are driven and against 
which our results can be benchmarked. We will focus on the analysis of the
characteristics following Dafermos \cite{MR2796054}.  Most of the 
material in this subsection can be found in classical
 papers by Hunter--Zheng \cite{MR1361013,MR1361014},
 and also in \cite{MR2182833}.


Solutions in the weak sense to the equations
\begin{equation}
\begin{aligned}
\pd_t q + u \pd_x q + \frac{1}{2}q^2 &= 0, \\
\pd_x u&= q,
\end{aligned}
\end{equation}
can be constructed quite explicitly by approximation
 with step functions. Approximating an initial 
function $q_0 \in L^2(\R)$ by 
\begin{align*}
q^n_0(x) =\sum_{-\infty}^\infty V^n_k \mathds{1}_{[k/n,(k+1)/n)}(x), \qquad
V^n_k = \fint_{k/n}^{(k+1)/n} q_0(x)\;\d x,
\end{align*}
we can confine our discussion to the ``box''-type 
initial condition ${q}_0 = V_0\mathds{1}_{[0,1)}$. This is true in spite of the equation being non-linear, see \cite{MR1361013}. Here $ \mathds{1}_{A}$ denotes the characteristic, or indicator, function of a set $A$, and $\fint_{A}$ denotes the average over a set $A$, i.e., $\fint_{A}\psi(x)\;\d x=\frac1{|A|} \int_{A}\psi(x)\;\d x$.

The equation with initial data ${q}_0$ is solved uniquely for at least a finite time by
\begin{equation*}
{q}(t,x) = \frac{2V_0}{2 + V_0 t}\mathds{1}_{\{2 + V_0 t > 0\}} \mathds{1}_{\{X(t,0) \le x < X(t,1)\}},
\end{equation*}
where $X(t,x)$ with $x \in [0,1)$ are the characteristics 
\begin{align}\label{eq:deterministic_characteristics}
X(t,x) = x +  \int_0^t {u}(s,X(s,x))\;\d s &= x + \int_0^t \int_0^{X(s,x)} {q}(s,y)\;\d y \,\d s \\
&= x+ \frac14(2 + V_0 t)^2, \notag
\end{align}
with ${u}$ being the function almost everywhere
 satisfying $\pd_ x{u} = {q}$, and the 
 final equality established by solving the linear 
 ordinary differential equation using the form of ${q}$ postulated.  A calculation gives
\begin{equation*}
{u}(t,x) =\mathds{1}_{\{2 + V_0 t > 0\}} \begin{cases}
0, & x\le \frac14(2+V_0t)^2, \\
\frac{2V_0x}{2 + V_0 t}, &\frac14(2+V_0t)^2<  x\le  1+\frac14(2+V_0t)^2, \\
\frac{2V_0}{2 + V_0 t}\big(1+\frac14(2+V_0t)^2 \big), &  x> 1+\frac14(2+V_0t)^2.
\end{cases}
\end{equation*} 

The general solution to the $n$th approximation 
can be recovered by summing up these ``boxes'' 
defined on disjoint intervals at every $t$, see \cite{MR1361013}.

From the above we see that where $V_0 \ge 0$,
 this solution exists uniquely and globally. 
 If $V_0 < 0$, however, there is a break-down time
 $t^*$ at which ${u}$ remains just absolutely continuous
 in the sense of the Lebesgue decomposition as it
 develops a steeper and steeper gradient over a 
 smaller and smaller interval around $x = 0$, and 
$\|{q}\|_{L^\infty}$ tends to infinity. This phenomenon,
 where $\|{u}\|_{L^\infty}$ remains bounded but 
$\|{q}\|_{L^\infty}=\|\pd_x{u}\|_{L^\infty} \to \infty$ is known
 as {\em wave-breaking}.

Up to wave-breaking, the energy $\|q(t)\|_{L^2}$ 
is conserved. This means that the characteristics 
$X(t,x)$ starting between $x = 0$ and $x = 1$ 
contract to a point. The failure of $X(t)$ in remaining
 a homeomorphism on $\R$ at wave-breaking leads 
to uncountably many possible ways of continuing 
solutions past wave-breaking, even under the requirement
 that $\|q(t)\|_{H^{-1}_\loc}$ remains continuous
 in time.


At the point of wave-breaking $q^2(t)$ passes from
 $L^1(\R)$ into a measure. We can think of this 
measure as a ``defect'' measure storing up the energy
 (or $L^2_x$-mass of $q$). It is possible to continue
 solutions in various ways past wave-breaking by 
releasing various amounts of this mass over various
 durations. The two extremes are generally termed 
``conservative'' and ``dissipative'' solutions \cite[p.~320]{MR1361013}.
Intermediates between these extremes 
when dissipation is not mandated everywhere, 
entirely, or eternally are also possible \cite{MR3860266}, 
as are more non-physical solutions exhibiting 
spontaneous energy generation. We relegate 
calculations showing this defect measure to 
Appendix \ref{sec:appendixB}.

Conservative solutions are constructed by releasing 
all the mass stored in the defect measure instantaneously 
after wave-breaking. That is, noticing that  
the formula for ${q}$ (less the characteristic 
function $\mathds{1}_{\{2 + V^1_0 t > 0\}} $) returns 
to a bounded function of the same --- conserved 
--- $L^2(\R)$-mass immediately post wave-breaking, 
and continues to satisfy the equation weakly, it 
is accepted that the formula defines a reasonable 
notion of solution. In particular:
\begin{equation}
\label{eq:dtm_H-1_continuity}
\begin{aligned}
{q} &\in L^\infty([0,T];L^2(\R)) \cap \mathrm{Lip}([0,T];H^{-1}_\loc(\R)), \\
{u} &\in C([0,T]\times \R),\\
0&=\pd_t ({q}^2) + \pd_x({u}{q}) \mbox{ in the sense of distributions}.
\end{aligned}
\end{equation}

Dissipative solutions arise when the ``defect measure'' 
stores up all mass eternally, and ${q}$ 
is simply set to nought after the wave-breaking 
time $t^*$. In this case the equations remain 
satisfied, and the previous inclusions remain valid, but
\begin{equation*}
0 \ge  \pd_t ({q}^2) 
	+ \pd_x({u}{q}) \mbox{ in the sense of distributions}.
\end{equation*}
reflecting the dissipation characterised by the 
defect measure.

These can be compared to continuation in the general 
stochastic setting, see Section~\ref{sec:continuations}.


We propose to approach the problem of well-posedness
 via the method of characteristics. 
As solutions are non-local, 
even though we have equations for 
characteristics $\d X(t,x)$ dependent on $u(t,X(t,x))$, 
and for $\d (q(t,X))$, there is no independent 
equation for $\d u(t,X(t,x))$. One of the aspects 
of this article is making sure that characteristics 
and functions constructed along them are 
defined without circularity, up to and beyond
wave-breaking, where non-uniqueness is necessarily
introduced into the problem.
Whilst our approach reduces to that of \cite{MR2796054}
in the deterministic case,
our analysis in the stochastic setting is 
complicated by the fact that at wave-breaking, where
a choice must be made as to the way that
characteristics should be continued beyond 
wave-breaking, the set of wave-breaking times
are dependent on the spatial variable $x$ and 
on the probability space. This means that 
wave-breaking occurs on a significantly 
more complicated set, and whereas in \cite{MR3451933,MR2796054,MR2852218}, for example,
translating between a wave-breaking time
and the set of initial points with characteristics 
leading up to a wave-breaking point at those times 
is a fairly straightforward affair, this operation
is much more delicate in the stochastic setting.
Even the measurability of wave-breaking
times in the filtration of the stochastic basis 
needs to be established in order to start a characteristic at wave-breaking
and match it up properly to characteristics leading up to
that wave-breaking time (on those particular sample-paths). 
Moreover the characteristics themselves are rough, 
and it is standard that there are correction terms compensating for
this roughness in evaluating functions on these characteristics. 
These issues compel us to set forth various notions of solutions
 to handle different aspects of the problem,
and then later to reconcile them. We shall do this in the next section.

\section{Solutions and a-priori estimates}\label{sec:setup}

\subsection{Definition of Solutions}\label{sec:pde_sols}
In this subsection we give definitions of different 
types of solutions and state our main theorem. 
As in the deterministic setting, there are two extreme 
notions of solution on which we shall focus. 
Whereas we have discussed how these arise in the 
deterministic setting both in Section \ref{sec:deterministic_background}
above (supplemented by Appendix \ref{sec:appendixB} below), 
we shall postpone the discussion regarding
 continuation beyond wave-breaking 
in the stochastic setting and the resultant non-uniqueness 
to Section \ref{sec:continuations}, after we have developed 
the theory sufficiently before and up to wave-breaking, 
with their supporting calculations.

We are working on a fixed stochastic basis 
\begin{equation}\label{eq:basis}
(\Omega, \mathscr{F},\{\mathscr{F}_t\}_{t \ge 0}, \mathbb{P})
\end{equation}
to which the process $W$ in \eqref{eq:sHS1} is 
adapted as a Brownian motion. Next we define weak 
solutions in the PDE sense in the usual way: 
Note that in Definition \ref{def:weak_sol}, we only consider time-independent test functions. 
\begin{definition}[Weak Solution]\label{def:weak_sol}
A weak solution to the stochastic Hunter--Saxton 
equation \eqref{eq:sHS1} with 
$\sigma \in (C^2\cap \dot{W}^{1,\infty}\cap \dot{W}^{2,\infty})(\R)$ 
and with initial condition $(u_0,q_0)$ where 
$q_0 \in L^1(\R)\cap L^2(\R)$ and $u_0$ are  related by 
\begin{equation*}
u_0(x) = \int_{-\infty}^x q_0(y)\;\d y,
\end{equation*}
is a pair $(u,q)$ of $\{\mathscr{F}_t\}_{t \ge 0}$-adapted 
processes, with  $u\in L^2(\Omega \times [0,T]; \dot{H}^1(\R))$ 
being absolutely continuous in $x$, and in 
$C([0,T]\times \R)\cap L^\infty([0,T];\dot{H}^1(\R))$, 
$\mathbb{P}$-almost surely, and $q \in L^2(\Omega \times [0,T]\times \R)$ 
and  in $C([0,T];H^{-1}_\loc(\R))\cap L^\infty([0,T];L^2(\R)))$, 
$\mathbb{P}$-almost surely. The solution $(u,q)$ satisfies, 
for any $\varphi \in C^\infty_0(\R)$ and for any $t \in [0,T]$,  $\mathbb{P}$-almost surely,
\begin{align}
0&=\int \varphi q\;\d x\bigg|_0^t - \int_0^t  \int \big(\pd_x  \varphi\, u q 
+ \frac{1}{2}\varphi q^2\big)\;\d x \,\d s 
- \int_0^t \int \pd_x \varphi\, \sigma q \,\d x \circ \d W,\label{eq:weak_form}\\
q &= \pd_x u \quad \text{in $L^2([0,T]\times \R)$}. \notag
\end{align}
In addition, we require that $\mathbb{P}$-almost surely, $\lim_{r \to -\infty }u(r) = 0$.
\end{definition}

\begin{remark}[The It\^o formulation of the noise]\label{rem:strat_to_ito}
Using the defintion of a weak solution (Def.\ref{def:weak_sol}), we have the temporal integrability to ensure that
the stochastic integral of \eqref{eq:weak_form} is a martingale.

From the definition of the Stratonovich integral we have
\begin{equation}\label{eq:S_to_I}
\int_0^t  \int \pd_x \varphi\, \sigma q \;\d x\circ \d W
 = \int_0^t \int \pd_x \varphi\, \sigma q \;\d x \,\d W  
+ \frac{1}{2}\int_0^t  \;\d\left\LL \int \sigma q \,\pd_x \varphi \;\d x, \;W\right\RR_s.
\end{equation}
Consider now  
$\psi = \sigma \pd_x \varphi$ as a time-independent test function in \eqref{eq:weak_form} ($\sigma$ is assumed to be at least once 
continuously differentiable), we find, $\mathbb{P}$-almost surely, that
\begin{align*}
\int (\sigma \pd_x \varphi\, q)(t,\dott) \;\d x& = \int \psi q \;\d x \bigg|_{t=0} +\int_0^t  \int \big(\pd_x  \psi\, u q 
+ \frac{1}{2}\psi q^2\big)\;\d x \,\d s\\  
& \quad+ \int_0^t \int \sigma q\, \pd_x \psi \;\d x \circ \d W  \\
&= \int \psi q \;\d x \bigg|_{t=0}+\int_0^t  \int \big(\pd_x  \psi\, u q 
+ \frac{1}{2}\psi q^2\big)\;\d x \,\d s   \\
&\quad + \int_0^t \int \sigma q \,\pd_x \big(\sigma \pd_x \varphi \big)\;\d x \, \d W +\frac{1}{2}\int_0^t 
\;\d\left\LL \int \sigma q \,\pd_x \psi \;\d x, \;W\right\RR_s.
\end{align*}
As all terms on the right-hand side except for the stochastic integral, are of
finite variation, we also have
\begin{align*}
\int_0^t  \;\d\left\LL \int \sigma q\, \pd_x \varphi \;\d x, \;W\right\RR_s&= 
\int_0^t  \;\d\left\LL  \int_0^{(\dott)} \int \sigma q \,\pd_x \psi \;\d x \, \d W, \;W\right\RR_s \\
&=  \int_0^t  \int \sigma q \,\pd_x \big(\sigma \pd_x \varphi \big)\;\d x \,\d s .
 \end{align*}
 Inserting this is in \eqref{eq:S_to_I}, we find
 \begin{equation}\label{eq:S_to_I_final}
\int_0^t  \int \pd_x \varphi \,\sigma q \;\d x\circ \d W
 = \int_0^t \int \pd_x \varphi \sigma q \;\d x \,\d W  
+ \frac{1}{2} \int_0^t  \int \sigma q\, \pd_x \big(\sigma \pd_x \varphi \big)\;\d x \,\d s.
\end{equation}

We can put this directly 
back into \eqref{eq:weak_form} and conclude that 
the weak solution as given can also be understood 
as a weak formulation of the It\^o equation 
\eqref{eq:ito_formulation}:
\begin{align*}
\pd_t q + \pd_x (u q) + \pd_x (\sigma q) \dot{W} 
- \frac{1}{2} \pd_x(\sigma \pd_x(\sigma q) )
&= \frac{1}{2} q^2.
\end{align*}
\end{remark}

Weak solutions are non-unique, a fact that shall 
be further expounded upon in Section~\ref{sec:continuations}. 
We can refine Definition~\ref{def:weak_sol} by concentrating 
on two types with additional properties as in the 
deterministic setting:
\begin{definition}[Conservative Weak Solutions]\label{def:cons_sols}
A conservative weak solution is a weak solution of 
\eqref{eq:sHS1} satisfying the energy equality 
\begin{align}
	\pt q^2+ \pd_x \Big(  \big(u-\frac14\pd_x\sigma^2 \big) q^2\Big)
		+ \pd_x \left( \sigma q^2\right) \dot{W}
		+& \pd_x \sigma q^2 \dot{W}
		-\frac12 \pd_{xx}^2\left(\sigma^2 q^2\right)\notag\\
				 &= q^2 \Big(\big(\pd_x\sigma\big)^2
		-\frac14 \pd_{xx}^2\sigma^2\Big),\label{eq:sol_cons}
\end{align}
in the sense of distributions on $[0,\infty)\times \R$,
 $\mathbb{P}$-almost surely.
\end{definition}
\begin{remark}
Equation \eqref{eq:sol_cons} is derived in
 Appendix \ref{sec:aprioribounds},  
 for $S \in W^{2,\infty}(\R)$. 
 Taking $S = S_\ell(q_\ep) = q_\ep^2 \wedge (2 \ell |q| - \ell^2)$ 
 for a mollified solution $q_\ep$, and taking 
 $\ep \to 0$ before $\ell \to \infty$ (when $S_\infty(q) = q^2$), 
 the conservation in the definition above follows from \eqref{eq:entropy}. 
 The full calculation can be found in 
 Lemma \ref{thm:mollifying_errors} and the proof of 
 Prop. \ref{thm:aprioriestimates} (also housed in 
 Appendix \ref{sec:aprioribounds}).
\end{remark}

\begin{remark}[Energy conservation identity]\label{rem:energy_balance_cons}
We shall prove in Theorem \ref{thm:gen_cons} that in 
the case $\sigma'' = 0$,  conservative weak solutions
 that are also solutions-along-characteristics (Def. \ref{def:sol_on_characteristics})
also satisfy the energy identity that, $\mathbb{P}$-almost surely,
\begin{align}\label{eq:max_energ_cons1}
\int_\R q^2(t,x)\;\d x = \int_\R q_0^2(x) \exp(-\sigma' W(t)) \;\d x.
\end{align}
In particular, for a deterministic initial value 
$q_0\in L^2(\R)$,
\begin{align}
\Ex \int_\R q^2(t,x)\;\d x 
&= \Ex \int_\R q_0^2(x) \exp(-\sigma' W(t)) \;\d x \notag\\
& = \iint_{\R^2} q^2_0(x) \exp(-\sigma' y)\; \gamma_t(\d y)\,\d x 
= \|q_0\|_{L^2(\R)} e^{(\sigma')^2t/4},
\end{align}
where $\gamma_t$ is the one-dimensional Gaussian measure at $t$.

This shows both that $q \in L^\infty([0,T];L^2(\R))$, 
$\mathbb{P}$-almost surely, and, in fact, also the 
additional integrability information in $\omega$, namely 
 that $q \in L^\infty([0,T];L^2(\Omega \times \R))$. 
This inclusion holds for more general noise 
(see Proposition \ref{thm:aprioriestimates}).
\end{remark}

\begin{definition}[Dissipative Weak Solutions]\label{def:weak_diss_sols}
A dissipative weak solution is a weak solution 
of \eqref{eq:sHS1} satisfying the condition that 
$q(t,x)$ is almost surely bounded from above on 
every compact subset of $(0,\infty)\times \R$, i.e., 
on every compact $E \subseteq (0,\infty) \times \R$, 
for $\mathbb{P}$-almost every $\omega$ there exists 
$M_{\omega,E} < \infty $ such that $q(t,x) < M_{\omega,E}$ 
for any $(t,x) \in E$,  in particular, $M$ is allowed 
to depend on $\omega$.
\end{definition}

 \begin{remark}[Energy dissipation identity and maximal energy dissipation]\label{rem:energy_balance_dissp}
 We shall show in Prop. \ref{thm:aprioriestimates} 
 that weak dissipative solutions also satisfy the 
 energy inequality
\begin{align}\label{eq:sol_dissp}
\pt q^2+ \pd_x \Big(  \big(u-\frac14\pd_x\sigma^2 \big) q^2\Big)
		+ \pd_x \left( \sigma q^2\right) \dot{W}
		+& \pd_x \sigma q^2 \dot{W}
		-\frac12 \pd_{xx}^2\left(\sigma^2 q^2\right)\notag\\
		 &\le q^2 \Big(\left(\pd_x\sigma\right)^2
		-\frac14 \pd_{xx}^2\sigma^2\Big),
\end{align}
in the sense of distributions (when integrated 
against non-negative test functions) on 
$[0,\infty)\times \R$, $\mathbb{P}$-almost surely.
 
 Defining the random variable $t^*_x$ parameterised 
 by every $x \in \R$ that is a Lebesgue point of 
 $q_0$ via the equation
\begin{equation}\label{eq:t_break}
-q_0(x) \int_0^{t^*_x} \exp\big(-\sigma' W( s)\big)\;\d s  = 2,
\end{equation}
or set $t^*_x = \infty$ if this is equality never holds. 
 In the case $\sigma'' = 0$, we shall prove 
 additionally in Theorem \ref{thm:gen_dissp} that 
 $\mathbb{P}$-almost surely, dissipative weak 
 solutions  that are also solutions-along-characteristics 
 (Def. \ref{def:sol_on_characteristics}) satisfy the energy identity
\begin{align}\label{eq:max_energ_dissp}
\int_\R q^2(t,x)\;\d x 
= \int_\R q_0^2(x) \exp(-\sigma' W(t)) \mathds{1}_{\{t \le t^*_x\}}\;\d x.
\end{align}
This formula similarly shows that a dissipative 
weak solution solution in the $\sigma'' = 0$ case 
is in $L^\infty([0,T]; L^2(\Omega \times \R))$ 
as the integrand on the right is non-negative 
and cannot be greater than \eqref{eq:max_energ_cons1} 
(again, see Proposition \ref{thm:aprioriestimates} for 
a more general statement).

It was shown in Cie\'slak--Jamar\'oz \cite{MR3451933} 
that this final requirement, in the deterministic setting, 
is implied by an Oleinik-type bound from above on 
$q$, and equivalent to a maximal energy dissipation 
admissibility criterion \`a la Dafermos \cite{MR328368,MR2796054,MR2852218}. 
The energy (in)equality is derived as part of the 
$L^2$-estimate worked out in the next subsection. 
As we also mention at the end of the paper, we 
shall show in an upcoming work that maximal 
energy dissipation is given by \eqref{eq:max_energ_dissp}, 
as well as uniqueness of these (maximally) 
dissipative solutions.

Taking $\sigma \equiv 0$, we recover the well-known 
conservative and dissipative solutions, respectively, 
of \cite{MR1361013}.
\end{remark}

The main aim of this paper is to establish the following theorem:
\begin{theorem}\label{thm:main_existence}
There exists conservative and dissipative weak 
solutions to the stochastic Hunter--Saxton equation 
\eqref{eq:sHS1} with $\sigma$ for which 
$\sigma'' = 0$ and $q_0 \in L^1(\R) \cap L^2(\R)$.
\end{theorem}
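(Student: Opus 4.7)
The plan is to follow the blueprint laid out by the deterministic theory in Section \ref{sec:deterministic_background}, replacing the deterministic characteristics by the stochastic ones to be constructed in Sections \ref{sec:lagrange} and \ref{sec:linear}. Since the equation is non-linear but the superposition of ``box''-type solutions was shown (in the deterministic setting) to yield the general weak solution, I would first prove the result for a single box $q_0 = V_0 \mathds{1}_{[a,b)}$, then assemble countable superpositions to reach general $q_0 \in L^1(\R)\cap L^2(\R)$.

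\textbf{Step 1: Box-type data.} With $\sigma$ linear (so $\sigma' \in \R$ is constant and $\sigma'' = 0$), the It\^o formulation \eqref{eq:ito_formulation} simplifies: the second-order Stratonovich-to-It\^o correction reduces to a first-order transport-like term. Exploiting this, I would solve the stochastic ODE for characteristics $X(t,x)$ and for $q(t,X(t,x))$ along them. By analogy with the deterministic ansatz $2V_0/(2+V_0t)$, I would guess and verify an explicit formula of the shape
\begin{equation*}
q(t,X(t,x)) = \frac{2 V_0 \exp(-\sigma' W(t))}{2 + V_0 \int_0^t \exp(-\sigma' W(s))\,\d s}
\end{equation*}
inside the image of the initial box under $X(t,\cdot)$, and zero outside. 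This is consistent with the random wave-breaking time \eqref{eq:t_break} and with the energy identities in Remarks \ref{rem:energy_balance_cons} and \ref{rem:energy_balance_dissp}. Conservative continuation corresponds to using this formula (with $|V_0 \int_0^t e^{-\sigma' W}\,\d s|$ possibly crossing $2$) past wave-breaking, with the characteristic image allowed to re-expand after collapse; dissipative continuation is defined by truncating $q$ to zero after $t^*_x$. In both cases, I would check directly by It\^o's formula and an integration by parts against a test function $\varphi \in C^\infty_0(\R)$ that the weak formulation \eqref{eq:weak_form} is satisfied.

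\textbf{Step 2: Measurability and adaptedness.} A key technical step, as emphasized in the introduction, is verifying that $t^*_x$ is a stopping time for each Lebesgue point $x$ and jointly $\mathscr{F}\otimes \mathcal{B}(\R)$-measurable; this is where the ``significantly more complicated set'' issue enters. Because \eqref{eq:t_break} defines $t^*_x$ implicitly through the monotone time-integral $\int_0^t e^{-\sigma' W(s)}\,\d s$, $t^*_x$ is the hitting time of $-2/q_0(x)$ by a continuous adapted process, which yields the required measurability and adaptedness of $(u,q)$.

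\textbf{Step 3: Superposition and passage to the limit.} Approximating $q_0$ by piecewise-constant $q_0^n = \sum_k V_k^n \mathds{1}_{[k/n,(k+1)/n)}$ and summing up the box solutions from Step 1, I would obtain candidate solutions $(u^n,q^n)$. The energy identities \eqref{eq:max_energ_cons1} and \eqref{eq:max_energ_dissp}, combined with the $L^1$-norm conservation expected along characteristics, provide uniform bounds in $L^\infty_t L^2_x(\Omega \times \R)$ together with the temporal $H^{-1}_{\loc}$-equicontinuity in \eqref{eq:dtm_H-1_continuity}. This lets me extract weak-$\star$ limits; an Aubin--Lions/Skorokhod-type argument handles the nonlinear $uq$ and $q^2$ terms, since $u^n$ converges locally uniformly (its $L^\infty$ bound is inherited from $\|q^n\|_{L^2}$). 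Passing to the limit in \eqref{eq:weak_form} yields a weak solution, and the inequalities/equalities \eqref{eq:sol_dissp}, \eqref{eq:sol_cons} are preserved under the corresponding (respectively monotone and linear) limits. A technical subtlety is that Section \ref{sec:pbt} is specifically devoted to extending the construction from boxes to general $L^1 \cap L^2$ data.

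\textbf{Main obstacle.} The hardest part will not be the box case itself, for which the explicit formulas do all the work, but rather the reconciliation between ``solutions-along-characteristics'' and the PDE-weak sense of Definition \ref{def:weak_sol} in the superposed and limiting solutions --- deferred in the paper to Section \ref{sec:characteristics_sols}. In particular, after wave-breaking the map $X(t,\cdot)$ ceases to be a homeomorphism on a random $(\omega,x)$-dependent set, so changing variables between Lagrangian and Eulerian formulations must be done carefully to ensure that the It\^o correction terms in \eqref{eq:S_to_I_final} assemble correctly and that the conservative/dissipative distributional identities hold pathwise.
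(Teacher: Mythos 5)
Your Step 1 and Step 2 follow the paper closely: the explicit Verhulst-type formula you guess is exactly \eqref{eq:q_on_char_lin_sigma} (Lemma \ref{thm:qX_behaviours}), the measurability of $t^*_x$ as the hitting time of $-2/q_0(x)$ by the monotone continuous adapted process $\int_0^t e^{-\sigma' W(s)}\,\d s$ is established in Section \ref{sec:asian_options}, and the box-data construction of $U$, of the characteristics, and of the two continuations is the content of Theorems \ref{thm:box_cons} and \ref{thm:box_dissp}. You also correctly identify the reconciliation of solutions-along-characteristics with weak solutions (Section \ref{sec:characteristics_sols}) as the step that actually closes the proof of the theorem.

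Where you depart from the paper, and where your argument has a genuine gap, is Step 3. The paper does \emph{not} pass to a limit over step-function approximations. Because $\sigma''=0$, the Lagrangian quantities $\mathfrak{Q}(t,x)$ and $\mathfrak{u}(t,x)$ in \eqref{eq:q_on_char_lin_sigma} and \eqref{eq:u_simple_lin_sigma} depend on $x$ only through $q_0(x)$ and not through the (yet unknown) characteristic $X(t,x)$; this lets one define $\Psi(t,y)=\int_{-\infty}^y\mathfrak{u}(t,x)\,\d x$ directly for arbitrary $q_0\in L^1(\R)\cap L^2(\R)$, solve the characteristic SDE \eqref{eq:tildeX_L2} driven by this explicit $\Psi$, and only then read off $U$ and $Q$ (Theorems \ref{thm:gen_cons} and \ref{thm:gen_dissp}). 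Your compactness route is precisely the one the paper relegates to an unproven remark, and as sketched it fails at two points. First, $q^n\weak q$ weakly in $L^2$ does not give $(q^n)^2\to q^2$ in the sense of distributions: concentration of $(q^n)^2$ into a defect measure at the random, $x$-dependent wave-breaking times is exactly the phenomenon at issue, and excluding it would require the equi-integrability furnished by the $L^{2+\alpha}$ bound of Proposition \ref{thm:aprioriestimates} together with an argument that wave-breaking occurs on a temporal null set in the limit; Aubin--Lions supplies none of this, since there is no compactness in $t$ for $q^n$ beyond $H^{-1}_\loc$-continuity. Second, the Jakubowski--Skorohod step transports you to a new stochastic basis, so a priori you obtain only a probabilistically weak (martingale) solution, whereas Definition \ref{def:weak_sol} demands adaptedness to the given $W$ on the fixed basis \eqref{eq:basis}; an additional identification step would be needed to return. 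The direct construction for general data avoids both difficulties and is what permits the pathwise energy identities \eqref{eq:energy_consv} and \eqref{eq:energy_dissp}.
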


As we shall be working on characteristics, in 
Section \ref{sec:solving_on_characteristics} below 
we adopt yet another notion of solutions. 

\begin{definition}[Solution-along-characteristics]\label{def:sol_on_characteristics}
On the stochastic basis  \eqref{eq:basis}, an $\{\mathscr{F}_t\}$-adapted 
process $Q\in L^2(\Omega \times [0,T] \times \R)$ and $Q\in C([0,T]; H^{-1}_\loc(\R)) \cap L^\infty([0,T];L^2(\R))$, 
$\mathbb{P}$-almost surely, is a solution-along-characteristics
 to the stochastic Hunter--Saxton equation \eqref{eq:sHS1} 
 if there exists an $\{\mathscr{F}_t\}$-adapted process 
 $U \in L^2(\Omega \times [0,T] ; \dot{H}^1(\R))$ and 
 in $C([0,T]\times \R)$, $\mathbb{P}$-almost surely,
  for which the following stochastic differential equations (SDEs) are satisfied strongly 
in the probabilistic sense and a.e.~on $[0,T]\times \R$:
\begin{align}
Q(t,x) :&= \pd_x U(t,x),\notag\\
 Q(t,\XXX(t,x)) &= q_0(x) 
 -  \frac{1}{2}\int_0^t Q^2(s,\XXX(s,x))\;\d s  
 -\int_0^t \sigma'(\XXX(s,x)) Q(s,\XXX(s,x))\circ \d W,\label{eq:QSDE_Lagrange}
\end{align}
where $q_0 \in L^1(\R) \cap L^2(\R)$, and where,
\begin{align*}
\XXX(t,x) = x + \int_0^t & U(s,\XXX(s,x))\,\d s 
+\int_0^t \sigma(\XXX(s,x)) \circ \d W(s).
\end{align*}
\end{definition}

\begin{remark}[Conservative and dissipative solutions-along-characteristics]
The solutions so defined are individualised into 
conservative and dissipative solutions-along-characteristics 
according to how $U(t,\XXX(t,x))$ 
(equivalently, $\XXX$) are extended past a 
(unique) wave-breaking time $t^*_x$ indexed by 
the initial point $x = \XXX(0,x)$,  cf.~Theorems~\ref{thm:gen_cons} 
and \ref{thm:gen_dissp}. 
We will in 
Section~\ref{sec:characteristics_sols} provide theorems showing that solutions-along-characteristics 
are weak solutions.
\end{remark}

As we shall see, the SDE \eqref{eq:QSDE_Lagrange} 
above is the Lagrangian formulation of the stochastic 
Hunter--Saxton equation \eqref{eq:sHS1}. In the linear 
case $\sigma'' = 0$ ($\sigma'$ is a constant) there is 
an explicit formula for the process 
$\mathfrak{Q} = \mathfrak{Q}(t,x)$ satisfying
\begin{equation*}
\d \mathfrak{Q} =  - \frac{1}{2}\mathfrak{Q}^2\;\d t  -\sigma'(X(t,x)) \mathfrak{Q}\circ \d W,
\end{equation*}
 as we shall demonstrate in 
 Section \ref{sec:solving_on_characteristics}. Importantly, 
 this SDE does not depends explicitly on 
 $t$ and $x$ (cf.~Remark \ref{rem:meaning_of_q(X)}).

This definition reflects our strategy of proof, 
which is to postulate a $U(t,x)$, and, using this 
function, define $Q(t,x) := \pd_x U(t,x)$ and the 
characteristics $\XXX(t,x)$ for which 
\begin{equation*}
\d \XXX(t,x) = U(t,\XXX(t,x))\;\d t + \sigma(\XXX(t,x))\circ \d W,
\end{equation*} 
and then show that $Q(t,\XXX(t,x))$ coincides with the 
explicit formula for the process $\mathfrak{Q}(t,x)$. 
A schematic diagram for our construction is as follows:

\begin{tikzcd}[column sep={5.2mm}, row sep=1.2cm]
&\mbox{construct }U(t,x) \arrow{dl}\arrow{dr}&\\
Q(t,x) := \pd_x U(t,x) \arrow{dr}&&\d \XXX = U(t,\XXX)\;\d t + \sigma(\XXX)\circ \d W\arrow{dl}\\
&Q(t,\XXX(t,x)) \overset{?}{=}\mathfrak{Q}(t,x) &
\end{tikzcd}

\subsection{A-priori bounds}
In the deterministic setting \cite[Section 4]{MR1361013} 
(see also \cite[Section 2.2.4]{MR2182833}, and references included there) 
it is known that weak conservative and dissipative 
solutions satisfy the following bounds:
\begin{align*}
\esssup_{t \in [0,T]}\| q(t)\|_{L^2(\R)} &\le \|q_0\|_{L^2(\R)},\\
\|q\|_{L^{2 + \alpha} ([0,T]\times \R)}^{2 + \alpha} 
&\le  C_{T,\alpha}\|q_0\|_{L^2(\R)}^2,
\end{align*}
for $t \in [0,T]$ and $0 \le \alpha < 1$.  
In the stochastic setting, the same types of bounds 
are generally available only in expectation. In fact, we have the following result. 

\begin{proposition}[A-priori bounds]\label{thm:aprioriestimates}
Let $q$ be a conservative or dissipative weak 
solution to the stochastic Hunter--Saxton equation \eqref{eq:sHS1}, 
with $\sigma \in (C^2 \cap \dot{W}^{1,\infty}\cap \dot{W}^{2,\infty})(\R)$, 
and initial condition $q(0) = q_0 \in L^1(\R) \cap L^2(\R)$. 
The following bounds hold:
\begin{align}
\esssup_{t \in [0,T]} \Ex\| q(t)\|_{L^2(\R)}^2 &\le C_T \|q_0\|_{L^2(\R)}^2, \label{eq:q_L2bound}\\
\Ex \|q\|_{L^{2 + \alpha} ([0,T]\times \R)}^{2 + \alpha} 
&\le  C_{T,\alpha} \|q_0\|_{L^2(\R)}^2\label{eq:q_Lalphabound},
\end{align}
for any $\alpha \in [0,1)$.
\end{proposition}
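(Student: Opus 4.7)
The plan is to use the energy (in)equalities \eqref{eq:sol_cons}/\eqref{eq:sol_dissp} (cf.\ Remarks \ref{rem:energy_balance_cons}/\ref{rem:energy_balance_dissp}) as the main tool, integrating in space against a cutoff, taking expectation to kill the martingale term, and closing with Gronwall. The derivations of \eqref{eq:sol_cons}--\eqref{eq:sol_dissp} themselves, via truncated entropies $S_\ell(q_\ep) = q_\ep^2 \wedge (2\ell|q_\ep|-\ell^2)$ applied to a mollification $q_\ep$ (with $\ep \to 0$ followed by $\ell \to \infty$), are provided in Appendix \ref{sec:aprioribounds} and in Lemma \ref{thm:mollifying_errors}; I would re-use the same mollification/truncation machinery for the $L^{2+\alpha}$ estimate.

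For \eqref{eq:q_L2bound}, integrate \eqref{eq:sol_cons} (conservative case) or \eqref{eq:sol_dissp} (dissipative case) against a smooth cutoff $\chi_R(x)\uparrow 1$. The divergence terms $\pd_x((u-\tfrac14(\sigma^2)')q^2)$, $\pd_x(\sigma q^2)\dot W$, and $\tfrac12 \pd_{xx}^2(\sigma^2q^2)$ produce only boundary-type contributions which vanish as $R\to\infty$; this uses $q\in L^2$, $u$ being absolutely continuous with $\pd_x u = q$, and the linear growth of $\sigma$ permitted by $\sigma \in \dot W^{1,\infty} \cap \dot W^{2,\infty}$. Taking expectation converts $\int \sigma' q^2 \,dx\,dW$ into a true martingale with zero mean. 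What remains is the Gronwall-ready inequality
\[
\Ex\|q(t)\|_{L^2}^2 \;\le\; \|q_0\|_{L^2}^2 + C_\sigma \int_0^t \Ex\|q(s)\|_{L^2}^2\,ds,
\]
where $C_\sigma$ controls the forcing $\tfrac12(\sigma')^2-\tfrac12\sigma\sigma''$ appearing on the right of \eqref{eq:sol_cons}; here one needs a moment bound on $q$ to handle the $\sigma\sigma''$ piece, which is itself propagated alongside the $L^2$ estimate or dealt with directly in the $\sigma'' = 0$ case that motivates the paper.

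For \eqref{eq:q_Lalphabound}, the plan is to run the mollification/truncation argument of the appendix once more, but with entropy $S(q) = |q|^{2+\alpha}$ (in truncated form $S_M = S\wedge M$ to stay within $W^{2,\infty}$, then taking $M\to\infty$). The algebraic identities $qS'(q)-S(q) = (1+\alpha)|q|^{2+\alpha}$ and $\tfrac12 q^2 S'(q) = \tfrac{2+\alpha}{2}q\,|q|^{2+\alpha}$, combined with $\pd_x u=q$, give (in the deterministic skeleton of the balance law)
\[
\pt |q|^{2+\alpha} + \pd_x(u\,|q|^{2+\alpha}) + \tfrac{\alpha}{2}\,q\,|q|^{2+\alpha} = 0.
\]
Splitting $q = q_+-q_-$, the positive part is controlled via the Oleinik-type bound $q_+ \le 2/t$ (the characteristic ODE $\dot q = -\tfrac12 q^2$ forces this for weak solutions of the Hunter--Saxton dynamics): $\int q_+^{2+\alpha}\,dx \le (2/t)^\alpha \|q\|_{L^2}^2$, and integrating in $t\in[0,T]$ using $\alpha<1$ together with the already-proven \eqref{eq:q_L2bound} gives the contribution bounded by $C_{T,\alpha}\|q_0\|_{L^2}^2$. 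The negative part $q_-^{2+\alpha}$ is absorbed into the dissipation (from \eqref{eq:sol_dissp}, for the dissipative case) or by a Lagrangian change-of-variables computation up to the blow-up time \eqref{eq:t_break}, which yields a contribution of order $\|q_0\|_{L^{1+\alpha}}^{1+\alpha}$ absorbed by the $L^1\cap L^2$ hypothesis via $|q_0|^{1+\alpha} \le |q_0|\mathds{1}_{\{|q_0|\le 1\}} + q_0^2\mathds{1}_{\{|q_0|>1\}}$.

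The main obstacle is the $L^{2+\alpha}$ estimate: unlike \eqref{eq:q_L2bound} it is not a pure energy estimate, relying on the asymmetric behaviour of $q_+$ versus $q_-$ under the Hunter--Saxton dynamics, i.e.\ on an Oleinik-type one-sided bound that is most transparent in the Lagrangian picture of Section \ref{sec:lagrange} but must here be justified at the weak-solution level. A secondary delicacy is the linear growth of $\sigma$ interacting with the coefficient $\sigma\sigma''$ in the energy balance: unless $\sigma''=0$, it is necessary to propagate a spatial moment on $q$ simultaneously with the $L^2$ estimate to make sense of the Gronwall inequality.
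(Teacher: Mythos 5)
Your treatment of \eqref{eq:q_L2bound} is essentially the paper's: one renormalises the mollified equation with the truncated entropies $S_\ell(v)=v^2\wedge(2\ell|v|-\ell^2)$, lets $\ep\to0$ then $\ell\to\infty$ to reach the energy (in)equality, kills the martingale term in expectation, and closes with Gronwall. Your worry about the unboundedness of $\sigma\sigma''$ in the coefficient $(\pd_x\sigma)^2-\frac14\pd_{xx}^2\sigma^2=\frac12(\sigma')^2-\frac12\sigma\sigma''$ is a fair observation, but it is shared by the paper (which tacitly treats this coefficient as bounded) and is vacuous in the $\sigma''=0$ case that the rest of the paper uses.

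The $L^{2+\alpha}$ argument, however, contains a genuine gap. Renormalising with the \emph{even}, degree-$(2+\alpha)$ entropy $S(v)=|v|^{2+\alpha}$ produces the source $-\frac{\alpha}{2}q|q|^{2+\alpha}$, whose dangerous contribution $+\frac{\alpha}{2}\int q_-^{3+\alpha}$ has degree $3+\alpha\ge 3$ and is \emph{not} space--time integrable near wave-breaking: on a box, $\int|q|^p\,\d x\sim|2+V_0t|^{2-p}$, so $\int_0^T\!\!\int|q|^p\,\d x\,\d t$ diverges for every $p\ge3$ (cf.\ Remark \ref{rem:high_integ}); no ``absorption into the dissipation'' or Lagrangian computation makes it finite. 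Your fallback of splitting the target itself into $q_\pm$ also does not close: the Oleinik bound $q\le 2/t$ is \emph{false} for conservative solutions (just past wave-breaking $q=2/(t-t^*)$), is unproven at the weak-solution level for dissipative ones, and the Lagrangian representation you invoke for $q_-$ is only constructed later in the paper and only for $\sigma''=0$, whereas the proposition is an a priori bound for arbitrary weak solutions and general $\sigma$. The missing idea is the paper's (Hunter--Zheng) choice of the \emph{odd}, lower-degree entropy $S(v)=v|v|^\alpha$: then
\begin{equation*}
vS(v)-\tfrac12 v^2S'(v)=\tfrac{1-\alpha}{2}\,|v|^{2+\alpha}
\end{equation*}
appears with a definite sign and \emph{is} the coercive space--time $L^{2+\alpha}$ quantity, while all remaining terms are either time-boundary terms $\int q|q|^\alpha\,\d x$ or zeroth-order terms in $q|q|^\alpha$, each controlled by interpolating $L^{1+\alpha}$ between the $L^1$ datum and the already-proved $L^2$ bound. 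This gives \eqref{eq:2plusalpha_prep} with no one-sided estimate and no characteristics, and it is exactly here that the restriction $1-\alpha>0$ enters.
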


Therefore we  have 
\begin{equation*}
q \in L^\infty([0,T]; L^2(\Omega \times \R)) 
\cap L^{2 + \alpha}(\Omega \times [0,T]\times \R)
\end{equation*}
 for any $\alpha \in [0, 1)$. These bounds are not 
 expected to hold for general weak solutions, 
 because, as we shall see, spontaneous energy generation 
 (spontaneous increase in $L^2$-mass even in expectation) 
 in $q$ is permissible under Definition \ref{def:weak_sol}.

We shall prove this proposition using renormalisation 
techniques. Calculations can be found in Appendix 
 \ref{sec:aprioribounds}. More precisely, 
we have the $t$-almost everywhere bounds:
\begin{align}\label{eq:energy_consv_ap}
\Ex \int_\R |q|^2 \;\d x\bigg|_0^t  
&\le  \Ex \int_0^t  \int_\R q^2 \Big( (\pd_x \sigma)^2
 - \frac{1}{4} \pd_{xx}^2\sigma^2 \Big) \;\d x\,\d s
\end{align}
for $L^2_{\omega, x}$-control, and 
\begin{align}
\frac{1 - \alpha}{2} \Ex \int_0^t \int_\R |q|^{2 + \alpha}\;\d x\,\d s 
&\le \Ex\int_\R q|q|^\alpha \;\d x\bigg|_0^t
- \frac{\alpha (\alpha + 1)}{2}\Ex \int_0^t \int_\R q|q|^\alpha (\pd_x \sigma )^2\;\d x \,\d s\notag\\
&\qquad + \frac{\alpha}{4} \Ex \int_0^t \int_\R \pd_{xx}^2 \sigma^2 q|q|^\alpha\;\d x\,\d s
\label{eq:2plusalpha_prep}
\end{align}
for control in $L^{2 + \alpha}_{\omega,t,x}$, by interpolation.

Because of the first term on the right-hand side 
of \eqref{eq:2plusalpha_prep} and the use of 
interpolation/H\"older's inequality, and because 
we only have pointwise almost everywhere-in-time 
bounds for $\Ex\|q(t)\|_{L^p_x}$ with $p = 2$, we 
cannot extend these estimates past $\alpha <1$ 
(but see Remark \ref{rem:high_integ} regarding 
possible higher integrability as a manifestation 
of regularisation-by-noise).

\begin{remark}[Energy conservation]
With respect to \eqref{eq:energy_consv_ap}, 
the equation $(\pd_x \sigma)^2 = \pd_{xx}^2 \sigma^2/4$, 
which implies energy conservation, can be solved 
explicitly by $\sigma(x) = A e^{\pm x}$ or $\sigma(x) \equiv C$, 
the first of which does not satisfy 
our linearity assumption except with $A = 0$.

This is nevertheless a noise of particular interest 
as shown by Crisan and Holm \cite[Thm. 10]{MR3815211}. 
The related stochastic Camassa--Holm equation 
derived via a stochastic perturbation of the 
associated Hamiltonian can be understood as a 
compatibility condition for the deterministic 
Camassa--Holm isospectral problem and a stochastic 
evolution equation for its eigenvalue if the noise 
takes the form $\sigma(x) = Ae^x + Be^{-x} + C$ 
for $A,B,C \in \R$. (Note that there is a calculation 
error in (2.13) of \cite{MR3815211} that invalidates 
Theorem 16 there --- see also Remark \ref{rem:sigma_constant} 
below, and Section \ref{sec:asian_options} for genuinely 
stochastic wave-breaking.)
\end{remark}

\section{The Lagrangian Formulation and Method of Characteristics}\label{sec:lagrange}

\subsection{Solving \texorpdfstring{$q$}{q} on characteristics}\label{sec:solving_on_characteristics} 
Even though the Hunter--Saxton equation is not 
spatially local, in the deterministic setting, 
characteristics  
\begin{equation*}
\pd_t X(t,x) = {u}(t,X(t,x)) 
\end{equation*}
essentially fix the evolution of the 
equations because functions constant-in-space 
between two characteristics remain constant-in-space, 
and $\|q(t)\|_{L^2}$ is conserved up to wave-breaking 
(and also beyond --- this being {\it one} way to characterise 
continuation of solutions past wave-breaking). In 
the stochastic setting the behaviour between 
characteristics is more complicated and there 
is no conserved quantity. Nevertheless, taking 
cue from the classical construction of characteristics, 
much can still be deduced for solutions to the stochastic equations.

The ``characteristic equations'' from which the 
stochastic Hunter--Saxton equation arise are written 
with Stratonovich noise, as pointed out by \cite{ABT2019}:
\begin{align}\label{eq:characteristics}
X(t,x) = x + \int_0^t & u(s,X(s,x))\,\d s 
+\int_0^t \sigma(X(s,x)) \circ \d W(s).
\end{align}

Assuming that these characteristics are well-posed, via a general 
It\^o--Wentzell formula \cite{MR2800911}, since 
$q(t;\omega)$ takes values in $L^2(\R)$, one can 
derive from \eqref{eq:sHS1} the simpler (Lagrangian 
variables) equation:
\begin{align}
\d q(t,X(t)) &=  - \frac{1}{2}q^2(t,X(t))\;\d t  
-\sigma'(X(t)) q(t,X(t))\circ \d W.\label{eq:qX_sde}
\end{align}

As mentioned after Definition \ref{def:sol_on_characteristics} 
above, the SDE \eqref{eq:qX_sde} satisfied by $q(t,X(t))$
 (if suitably well-defined), can be written without reference
 to $x$ or to compositions of solution with characteristics as:
\begin{align}
\d \mathfrak{Q} = -\frac{1}{2} \mathfrak{Q}^2 \;\d t
 - \sigma' \mathfrak{Q}\circ \d W,\label{eq:qX_sde2}
\end{align}
and can in fact be solved explicitly without dependence 
on $X$, in the case $\sigma'' = 0$. We shall see this
 in \eqref{eq:q_on_characteristics} of Lemma \ref{thm:qX_behaviours}.

As in the previous section, since we are working 
presently on the assumption of well-posedness, in 
this section we do not restrict ourselves to $\sigma'' = 0$. 
We shall do so starting in Section \ref{sec:linear}.  
We postpone resolving the issue of the well-posedness 
of the characteristics equation \eqref{eq:qX_sde} to 
section Section \ref{sec:continuations}, but record here 
some properties of the composition $q(t,X(t,x))$ 
if it exists and is a strong solution of 
the SDE \eqref{eq:qX_sde}: 

\begin{lemma}\label{thm:qX_behaviours}
\begin{itemize}
\item[(i)] Assume that $X(t,x)$ is a collection 
of adapted processes with $\mathbb{P}$-almost surely 
continuous paths for each $x$ in the collection of 
Lebesgue points of $q_0$. Suppose that the composition 
$q(t,X(t,x))$ is a strong solution to the SDE \eqref{eq:qX_sde}
with $\sigma \in C^2(\R) \cap \dot{W}^{2,\infty}(\R)$
 (i.e., $u$ is $C^2$ with bounded second derivative), 
for each $x$ in the same set. Then $q(t,X(t,x))$ 
can be expressed by the formula
\begin{align}\label{eq:q_on_characteristics}
q(t,X(t,x))&= \frac{Z(t,x)}{\frac1{q_0(x)} 
+\frac12 \int_0^t Z(s,x) \;\d s},
\end{align}
where $Z(t,x) = \exp\big( - \int_0^t \sigma'(X(s,x)) \circ \d W\big)$, 
up to  the random time $t = t^*_x$ defined by 
\begin{align}\label{eq:blow-up_asianoption}
- \frac{1}{2}q_0(x)\int_0^{t^*_x}\exp\Big( - \int _0^s \sigma'(X(r,x)) \circ \d W(r)\Big) \;\d s = 1.
\end{align}

\item[(ii)] For $X$ as above assume further that 
$X(t)\colon \R \to \R $ is a homeomorphism of $\R$. 
If $q_0(x)$ can be written as a sum $q_1(0,x) + q_2(0,x)$
 of functions of disjoint support, then  
\begin{equation*}
q(t,x) = q_1(t,X(t,X(t)^{-1}(x))) +  q_2(t,X(t,X(t)^{-1}(x))),
\end{equation*}
and $q_1(t)$ and $q_2(t)$ have $\mathbb{P}$-almost surely 
disjoint supports.

\end{itemize}

\end{lemma}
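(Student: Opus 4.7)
The plan for part (i) is to exploit the fact that Stratonovich calculus obeys the ordinary chain rule: the Lagrangian SDE \eqref{eq:qX_sde} is of Bernoulli type (the $-q^2/2$ drift being the nonlinear obstruction), and the reciprocal substitution $P := 1/q(t,X(t,x))$ linearises it. Applying the Stratonovich chain rule to $P = 1/q$ using \eqref{eq:qX_sde} produces the linear equation $\d P = \tfrac{1}{2}\,\d t + \sigma'(X)\, P \circ \d W$, which I propose to solve by the standard integrating-factor method: multiplying by $Z(t,x) = \exp\!\big(-\int_0^t \sigma'(X(s,x))\circ \d W\big)$ (which satisfies $\d Z = -\sigma'(X)\, Z \circ \d W$) and invoking the Stratonovich product rule collapses the stochastic integral to give $\d(PZ) = (Z/2)\,\d t$. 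Integrating with $P(0)Z(0) = 1/q_0(x)$ and inverting $P \mapsto 1/P$ produces \eqref{eq:q_on_characteristics}; the blow-up time $t^*_x$ emerges as the first instant at which the denominator vanishes, coinciding with \eqref{eq:blow-up_asianoption}. Alternatively one may verify by direct application of the Stratonovich quotient rule that the right-hand side of \eqref{eq:q_on_characteristics} satisfies \eqref{eq:qX_sde} with the correct initial datum, and appeal to local uniqueness of the SDE (whose drift is locally Lipschitz in $q$) up to explosion.

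For part (ii), the key observation is that \eqref{eq:qX_sde} is pointwise in the label $x$: for each initial point $x$ the evolution of $q(t,X(t,x))$ depends only on $q_0(x)$ and on the sample path of $X(\cdot,x)$, with no coupling across different $x$. The explicit formula \eqref{eq:q_on_characteristics} furthermore forces the null initial datum to give the identically zero solution. Since $q_1(0,\cdot)$ and $q_2(0,\cdot)$ have disjoint supports, at each $x$ at most one of them is nonzero, so pointwise uniqueness identifies $q(t,X(t,x))$ with $q_i(t,X(t,x))$ whenever $x \in \supp q_i(0,\cdot)$, where $q_i(t,X(t,x))$ denotes the Lagrangian solution starting from $q_i(0,x)$; this yields the Lagrangian sum formula. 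Composing with the homeomorphism $X(t)^{-1}$ translates this into the stated Eulerian identity, and the supports $X(t)(\supp q_i(0,\cdot))$ are disjoint precisely because $X(t)$ is a bijection.

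The main obstacle I anticipate is controlling the interval on which the reciprocal substitution in (i) is meaningful: one needs $q(t,X(t,x))$ to remain strictly nonzero on $[0,t^*_x)$. This is in fact transparent from the formula itself, since $Z > 0$ almost surely, so the denominator retains the sign of $1/q_0(x)$ throughout $[0,t^*_x)$ and $q$ inherits the sign of $q_0(x)$; in particular $t^*_x = \infty$ whenever $q_0(x) \ge 0$, and finite wave-breaking only arises when $q_0(x) < 0$. A secondary technical point is the legitimacy of Stratonovich calculus applied to compositions with $X(\cdot,x)$, which is ensured by the continuous-semimartingale nature of $X(\cdot,x)$ and by the hypothesis $\sigma \in C^2(\R)\cap \dot{W}^{2,\infty}(\R)$.
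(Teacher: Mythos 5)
Your proposal is correct and follows essentially the same route as the paper: the reciprocal substitution $h=1/q(t,X(t,x))$ reducing \eqref{eq:qX_sde} to a linear (stochastic Verhulst) equation, the integrating factor $Z(t,x)=\exp\big(-\int_0^t\sigma'(X(s,x))\circ\d W\big)$, inversion to obtain \eqref{eq:q_on_characteristics}, and reading off $t^*_x$ from the vanishing of the denominator, with part (ii) handled by the pointwise-in-$x$ decoupling and positivity of $Z$. Your sign discussion justifying that $q$ stays nonzero on $[0,t^*_x)$ is a welcome explicit supplement to what the paper leaves implicit.
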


\begin{remark}[Non-associativity of the Stratonovich product]\label{rem:non_assoc_stratonovich_prod}
Before we proceed to the proof we point out two obvious 
distinctions
\begin{itemize}
\item[(i)] $(\d q)(t,X(t))$ is not $\d(q(t,X(t)))$; 
these are related by the It\^o--Wentzell formula:
\begin{equation*}
\d(q(t,X(t))) = (\d q)(t,X(t)) + (\pd_xq)(t,X(t))\circ \d X;
\end{equation*}
to avoid the over-proliferation of parentheses, we take
$\d q(t,X(t))$ always to mean $\d (q(t,X(t)))$. 
\item[(ii)]Also, $(AB)\circ \d C$, for three processes 
$A$, $B$, and $C$ with finite quadratic variation, 
is not $A (B \circ \d C)$. The difference is
\begin{equation*}
(AB) \circ \d C - A (B \circ \d C) = \frac{1}{2} B \LL A, C \RR.
\end{equation*}
For notational convenience $AB\circ \d C$ will 
always denote $(AB) \circ \d C$, which, as especially 
pointed out in \cite[Lemma 3.1]{ABT2019}, 
is also equivalent to $A\circ (B \circ \d C)$.
\end{itemize}
\end{remark}

\begin{proof}
No requirements on linearity need be made here, but we 
remark after the end of this proof how formulas
derived simplify in an important way in this special case.

Using the change-of-variable $q(t,X(t)) \mapsto h(t) = 1/q(t,X(t))$
 reduces the above to a linear SDE in $h(t)$:
\begin{align*}
\d h =  \d \frac{1}{q(t,X(t))}&= \frac{-1}{q^2(t,X(t))} \circ \,\d q(t,X(t))\\
&= \frac{-1}{q^2(t,X(t))}\circ
 \bigg[ -\frac{1}{2} q^2(t,X(t))\,\d t -  \sigma'(X(t))q(t,X(t)) \circ \,\d W \bigg] \\
&= \frac{1}{2}\,\d t + \sigma'(X(t)) h(t) \circ\,\d W.
\end{align*}

From \cite[Eq. IV.4.51]{MR1214374}, the equation for 
$h(t)$, and hence for $q(t,X(t))$, can be solved 
explicitly, being the solution of the 
\emph{stochastic Verhulst equation}.
Setting
\begin{align}\label{eq:verhulst_integrating_factor}
Z(t) = Z(t,x) = \exp\bigg(-\int_0^t \sigma'(X(s,x)) \circ \d W\bigg), 
\end{align}
the linear equation for $h$ and $q(t,X(t))$ can be solved explicitly:
\begin{align*}
h(t)&= \frac{1}{Z(t)} \Big(h(0) +\frac12 \int_0^t Z(s)\;\d s\Big),
\end{align*}
because
\begin{align*}
\d \bigg[\frac{1}{Z(t)}\Big(h(0) & + \frac12\int_0^t Z(s) \;\d s\Big) \bigg] \\
&= \frac{1}{Z(t)} \circ \frac{1}{2}Z(t) \;\d t
-  \Big(h(0) +\frac12 \int_0^t Z(s) \;\d s\Big) \circ (\frac{1}{Z^2(t)} \circ \d Z(t))\\
&= \frac{1}{2} \;\d t-  \Big(h(0) + \frac12\int_0^t Z(s) \;\d s\Big) 
\frac{1}{Z(t)} \circ (-\sigma'(X(t)) \circ \d W)\\
&= \frac{1}{2} \;\d t + \sigma'(X(t)) h \circ \d W
\end{align*}
as sought. Here we used the rule $A \circ (B \circ \d C) = (AB) \circ \d C$ repeatedly.
 And consequently,
\begin{align*}
q(t,X(t,x))&= \frac{Z(t,x)}{\frac1{q_0(x)} +\frac12 \int_0^t Z(s,x) \;\d s},
\end{align*}
proving \eqref{eq:q_on_characteristics}.

Since $Z > 0$ everywhere, and $X(0,x) = x$, 
blow-up of $q(t,X(t,x))$ occurs at $t = t^*_x$ at which 
\begin{align}\label{eq:wave_breaking_time}
-\frac{1}{2}q_0(x)\int_0^{t^*_x} \exp\Big( - \int _0^s \sigma'(X(r,x)) \circ \d W(r)\Big) \;\d s = 1.
\end{align}

It is immediate that if $q_0(x) = 0$,  then $q(t,X(t,x)) = 0$. 
This implies that initial conditions with disjoint 
support give rise to solutions that have disjoint 
support, up to wave-breaking.

\end{proof}

\begin{remark}[Pathwise formulation for constant $\sigma$]\label{rem:sigma_constant}
 It is similarly immediate that if $\sigma' = 0$ 
 ($\sigma$ constant), then the blow-up time 
 coincides with that arising from deterministic 
 dynamics.  In fact, before we proceed to the next 
 section, we point out that the case $\sigma' = 0$ 
 is effectively the deterministic equations because 
 in a ``frame-of-reference'' given via a path-wise 
 transformation $x \mapsto x + \sigma W$, see 
 \cite[Prop. 2.6]{MR3927370} and \cite[Section 6.2]{MR2593276}, 
 then modulo measurability concerns,
\begin{align*}
U(t,x) = u(t,x + \sigma W(t)), \quad V(t,x) 
= q(t,x + \sigma W(t))
\end{align*}
solve the deterministic Hunter--Saxton equation
\begin{align*}
0&= \pd_t V + U\pd_x V + \frac{1}{2} V^2, \\
V&= \pd_x U,
\end{align*}
exactly when $q$ and $u$ solve \eqref{eq:sHS1} 
with constant $\sigma$. In fact, this is true for 
all equations of the form
\begin{equation*}
0 = \pd_t u + \mathcal{B}[u] + \sigma \pd_x u \circ \dot{W},
\end{equation*}
in which $\mathcal{B}$ is an integro-differential 
functional in the spatial variable 
(but not directly dependent on the same) as these 
operations are invariant in $x$-translations. 
See also Remark \ref{rem:alium}.
\end{remark}

\begin{remark}[The special case $\sigma'' = 0$]\label{rem:meaning_of_q(X)}
Referring to \eqref{eq:q_on_characteristics},
\eqref{eq:blow-up_asianoption}, 
and \eqref{eq:wave_breaking_time}, consider the case of linear 
$\sigma$. Since then $\sigma'$ is a constant,
we conclude that $q(t,X)$ and the wave-breaking 
time depend on $x$ only through $q_0$ --- 
{\em and not also cyclically through} $X(t,x)$, and 
in \eqref{eq:q_on_characteristics}, 
$Z(t,x) = \exp(-\sigma' W(t))$ is independent 
of $x$ altogether.

The expression \eqref{eq:q_on_characteristics} 
 can this case be written as 
\begin{align}\label{eq:q_on_char_lin_sigma}
\mathfrak{Q}(t,x) = \frac{e^{-\sigma' W(t)}}{\frac1{q_0(x)} + \frac12\int_0^t e^{-\sigma' W(s)}\;\d s }.
\end{align}

As mentioned after Definition \ref{def:sol_on_characteristics}, 
we shall define $\mathfrak{Q}(t,x)$ up to $t^*_x$ 
in subsequent discussions where $\sigma'' = 0$, as a 
family of processes indexed by $x$ by equation 
\eqref{eq:q_on_char_lin_sigma}, and  
{\it not as the composition} of some yet 
unknown $q(t,x)$ with a yet unknown $X(t,x)$ 
(that is, for example, the expression $q(t,X(s,x))$ 
has no meaning for us yet where $s \not=t$) .
\end{remark}

\begin{remark}[An application of the theory of Bessel processes/Ray--Knight theorems]

As an aside, we mention that it is possible to 
represent $\mathfrak{Q}$ as (a simple function of) a 
time-changed squared Bessel process of dimension 
$1$ when $\sigma''\equiv 0$ (that is, as the absolute value
of some Brownian motion $\tilde{W}$).

A result of Lamperti \cite{MR307358}, see also \cite[XI.1.28]{MR1725357},
showed that there exists a Bessel process 
$R^{(\nu)}$ of index $\nu$, i.e., of dimension $ d = 2 (\nu + 1)$,  
for which
\begin{equation*}
\exp(W(t) + \nu t) = R^{(\nu)}\Big(\int_0^t \exp\big(2(W(s) + \nu s)\big)\;\d s\Big).
\end{equation*}

By a slight modification of Lamperti's result, it 
can be shown that there exists a squared Bessel 
process $\Z^{(\delta)}(t)$ of dimension 
$d = 1 + 2c/(\sigma')^2$ for which 
\begin{align*}
\frac{2}{(\sigma')^2}\exp(-\sigma' W + c t) 
&= \Z^{(\delta)}(\LL M, M \RR(t)), \\
 M(t) &
= -\int_0^t \frac{1}{\sqrt{2}}
	\exp\Big(\frac12\big(-\sigma' W(s) + cs)\Big)\;\d W(s).
\end{align*}

We can see this as follows.
A squared Bessel process of dimension 
$d$ (starting at $\lambda$) satisfies:
\begin{equation*}
\Z^{(\delta)}(t)= \lambda + 2 \int_0^t \sqrt{\Z^{(\delta)}}\;\d B + \delta t.
\end{equation*}
Letting $B$ be the Brownian motion for which
\begin{equation*}
B({\LL M, M\RR(t)}) = M(t)
\end{equation*}
under the Dambis--Dubins--Schwarz theorem,
\begin{align}\label{eq:misc1}
\Z^{(\delta)}({\LL M, M \RR(t) }) 
= \lambda + 2\int_0^t \sqrt{\Z^{(\delta)}({\LL M, M \RR(s) })}\;\d M(s)
 + \delta {\LL M, M \RR(t) }.
\end{align}
Expanding ${\LL M, M \RR(t) } =\frac12 \int_0^t \exp(-\sigma' W(s) + cs)\;\d s$, 
we find that with 
\begin{equation*}
\lambda = \frac{2}{(\sigma')^2},\qquad \delta 
= \frac{2c}{(\sigma')^2} + 1,
\end{equation*}
the ansatz $Y(t)= \lambda \exp(-\sigma' W(t) + c t) $ 
satisfies the equation
\begin{equation*}
\d Y(t) = \frac{-2}{\sqrt{2}} \sqrt{Y(t)} \exp\big(\frac{-\sigma' W(t) + ct }{2}\big) \;\d W(t)
 + \frac{\delta}{2} \exp(-\sigma' W(t) + c t)\;\d t,
\end{equation*}
which is \eqref{eq:misc1} above with 
$Y(t) =\Z^{(\delta)}({\LL M, M \RR(t) })$.

Therefore choosing $c = 0$ above, there exists a 
squared Bessel process $\Z$ of dimension one
(the absolute value of a Brownian motion) for which
\begin{equation*}
\exp(-\sigma' W(t)) = \Z(\frac12\int_0^t \exp(-\sigma' W(s))\;\d s), 
\end{equation*}
and hence,
\begin{equation*}
q(t,X(t,x)) 
	= \frac{\Z(\frac12\int_0^t \exp(-\sigma' W(s))\;\d s)}{\frac1{q_0(x)}
		 +\frac12 \int_0^t \exp(-\sigma' W(s))\;\d s}.
\end{equation*}
\end{remark}

Finally we prove our main technical lemma, which 
will be useful in establishing well-posedness later. 
This lemma is important because it describes the 
main feature of wave-breaking --- that $u$ gets 
steeper and steeper as $q$ nears wave-breaking, 
but the jump is actually smaller and smaller, so 
that in the limit, around the point of wave-breaking, 
$u$ remains absolutely continuous, but 
$(\pd_x u)^2 = q^2$ passes into a measure.
\begin{lemma}[Absolute continuity of $u$ at wave-breaking]\label{thm:technical}
 Let $t^*_x$ be the wave-breaking time defined by 
 \eqref{eq:blow-up_asianoption} indexed by the 
 Lebesgue points $x$ of $q_0$. Assume that $X(t,x)$ 
 is a collection of adapted processes with 
 $\mathbb{P}$-almost surely continuous paths for 
 each $x$ in the collection of Lebesgue points of $q_0$. 
 Suppose that the composition $q(t,X(t,x))$ is a 
 strong solution to the SDE \eqref{eq:qX_sde} for 
 each $x$ in the same collection. Set
\begin{align}
\mathfrak{u}(t,x;\omega) 
&= \mathfrak{u}(t,x) \notag \\
&:= q(t,X(t,x))\exp\Big(\int_0^t q(s,X(s,x))\;\d s 
	+ \int_0^t \sigma'(X(s,x)) \circ\d W(s)\Big).\label{eq:pre_u}
\end{align}
It holds that for such $x \in \R$ as aforementioned,
\begin{equation*}
\mathbb{P}-a.s.,\qquad  \lim_{t \nearrow t^*_x} \mathfrak{u}(t,x) = 0.
\end{equation*}
\end{lemma}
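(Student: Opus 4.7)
The plan is to substitute the explicit formula \eqref{eq:q_on_characteristics} into the definition of $\mathfrak{u}(t,x)$, observe two cancellations that collapse the Stratonovich integral and the running integral of $q$ into a single elementary quantity, and then conclude by continuity at $t^*_x$.

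First I would record the convenient shorthand $h(t,x) := \tfrac{1}{q_0(x)} + \tfrac12 \int_0^t Z(s,x)\,\d s$, so that Lemma \ref{thm:qX_behaviours}(i) reads $q(t,X(t,x)) = Z(t,x)/h(t,x)$ up to $t^*_x$, and note that the defining relation \eqref{eq:blow-up_asianoption} for $t^*_x$ is precisely $h(t^*_x,x) = 0$. The first cancellation is immediate from \eqref{eq:verhulst_integrating_factor}: $\exp\bigl(\int_0^t \sigma'(X(s,x)) \circ \d W\bigr) = 1/Z(t,x)$, so the stochastic exponential in \eqref{eq:pre_u} kills the numerator in $q(t,X(t,x))$, leaving
\begin{align*}
\mathfrak{u}(t,x) = \frac{1}{h(t,x)} \exp\!\Bigl(\int_0^t q(s,X(s,x))\,\d s\Bigr).
\end{align*}

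Next I would exploit the logarithmic derivative structure of $h$ in $t$. Since $\partial_t h(t,x) = \tfrac12 Z(t,x)$ is a classical (not stochastic) derivative in $t$ for each sample path, one has $q(t,X(t,x)) = 2\,\partial_t h(t,x)/h(t,x) = 2\,\partial_t \log|h(t,x)|$ on $[0,t^*_x)$, where $h(\cdot,x)$ keeps a definite sign (and $t^*_x<\infty$ forces $q_0(x)<0$). Integrating in $t$ from $0$, using $h(0,x) = 1/q_0(x)$,
\begin{align*}
\int_0^t q(s,X(s,x))\,\d s = 2\log\bigl|q_0(x)\,h(t,x)\bigr|, \qquad \exp\!\Bigl(\int_0^t q(s,X(s,x))\,\d s\Bigr) = q_0(x)^2\,h(t,x)^2.
\end{align*}
Substituting back yields the clean identity $\mathfrak{u}(t,x) = q_0(x)^2\,h(t,x)$.

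From here the conclusion is immediate: $h(\cdot,x)$ is $\mathbb{P}$-almost surely continuous on $[0,t^*_x]$ (being the integral of the continuous process $Z(\cdot,x)$), and $h(t^*_x,x)=0$ by \eqref{eq:wave_breaking_time}, so $\mathfrak{u}(t,x) \to 0$ as $t \nearrow t^*_x$, $\mathbb{P}$-a.s. The cases $q_0(x)=0$ (where $\mathfrak{u}\equiv 0$) and $q_0(x)>0$ (where $t^*_x=\infty$) are trivial or vacuous. The only delicate point in the whole argument is the justification that $q(s,X(s,x))$ may be differentiated in $s$ pathwise as $2\,\partial_s \log|h|$ despite the stochastic nature of $Z$; this is legitimate because $h$ is, for almost every sample path, a continuously differentiable function of $t$ alone — the Stratonovich integral lives inside $Z$, not inside the time-integral $\int_0^t Z\,\d s$. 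I do not foresee any significant technical obstacle beyond this bookkeeping.
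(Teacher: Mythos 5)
Your proposal is correct and follows essentially the same route as the paper: both substitute the explicit Verhulst formula \eqref{eq:q_on_characteristics}, use the cancellation $\exp\bigl(\int_0^t \sigma'(X)\circ\d W\bigr)=1/Z$, recognise $q(s,X(s,x))$ as $2\,\partial_s\log|h|$ to evaluate the time-integral in closed form, and arrive at the same identity $\mathfrak{u}(t,x)=q_0(x)\bigl(1+\tfrac12 q_0(x)\int_0^t Z(s,x)\,\d s\bigr)=q_0(x)^2 h(t,x)$, which vanishes at $t^*_x$ by \eqref{eq:wave_breaking_time}. The pathwise differentiability remark at the end is the right justification and matches the paper's implicit use of the same fact.
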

\begin{remark} The quantity \eqref{eq:pre_u} ought 
to be thought of heuristically as 
$$
q(t,X(t,x))\frac{\pd X}{\pd x},
$$ 
and will be integrated in $x$
to construct a function $U(t,x)$, defined on 
characteristics (cf.~\eqref{eq:Psi_auxU}). The 
exponential is a $\mathbb{P}$-almost surely finite 
quantity up to blow-up because we assume that 
$\sigma'$ is bounded (and then constant in Section \ref{sec:linear}). 
Furthermore up to blow-up (if there is blow-up) there 
is always an upper bound on $q(t,X(t,x))$ depending 
on $q_0(x)$ and $\sigma'$. In the case $\sigma'' = 0$, 
we can define $\mathfrak{u}$ as a well-defined 
quantity with $\mathfrak{Q}(t,x)$ given by 
\eqref{eq:q_on_char_lin_sigma} in the place of 
$q(t,X(t,x))$, sans assumptions on $q$ and $X$, so 
that $\mathfrak{u}$ is expressible as
\begin{align}\label{eq:pre_u_lin_sigma}
\mathfrak{u}(t,x)
	 := \mathfrak{Q}(t,x)
	 	\exp\Big(\int_0^t \mathfrak{Q}(s,x)\;\d s + \sigma' W(t)\Big),
\end{align}
which, as we shall see in the proof, 
cf.~\eqref{eq:u_simple}, reduces to
\begin{align}\label{eq:u_simple_lin_sigma}
q_0(x)\Big(1 + \frac12 q_0(x) \int_0^t e^{-\sigma' W(s)} \;\d s\Big).
\end{align}
It is easily seen from the preceding formula that 
in the deterministic case, where the integral reduces 
further to $t/2$, we recover the linear term 
familiar in the deterministic theory.
\end{remark}

\begin{proof} 
Let $Z(t,x) = \exp(-\int_0^t\sigma'(X(s,x))\circ \d W(s))$. 
Using the expression \eqref{eq:q_on_characteristics}, 
we have
\begin{align}
 \mathfrak{u}(t,x) 
 &= q(t,X(t,x)) \exp\bigg(\int_0^t q(s,X(s,x))\;\d s 
 	+\int_0^t \sigma'(X(s,x))\circ\d W(s)\bigg)\notag\\
 &= \frac{Z(t,x)}{\frac1{q_0(x)} + \frac12\int_0^t Z(s,x)\;\d s} \notag\\
  &\qquad\times\exp\Big(\int_0^t \frac{Z(s,x)}{\frac1{q_0(x)} + \frac12\int_0^s Z(r,x)\;\d r} \;\d s 
  	 +\int_0^t \sigma'(X(s,x))\circ\d W(s)\Big)\notag\\
  &=  \frac{Z(t,x)}{\frac1{q_0(x)} + \frac12\int_0^t Z(s,x)\;\d s} \notag\\
  & \quad\times\exp\Big(2 \int_0^t \frac{\d }{\d s} \log\big(-\frac1{q_0(x)} 
  		-\frac12 \int_0^s Z(r,x)\;\d r\big)\;\d s  +\int_0^t \sigma'(X(s,x))\circ\d W(s)\Big)\notag\\
  &=  \frac{Z(t,x)}{\frac1{q_0(x)} + \frac12\int_0^t Z(s,x)\;\d s} 
  	\Big(-1 - \frac12q_0(x)\int_0^t Z(s,x)\;\d s\Big)^2 
  		e^{ \int_0^t \sigma'(X(s,x))\circ\d W(s)}\notag\\
  &= Z(t,x) \exp\big( \int_0^t \sigma'(X(s,x))\circ\d W(s)\big)
  	 q_0(x) \Big(1+ \frac12q_0(x)\int_0^t Z(s,x)\;\d s\Big)\notag\\
	 &=  q_0(x) \Big(1+ \frac12q_0(x)\int_0^t Z(s,x)\;\d s\Big)
	 \label{eq:u_simple}.
\end{align}
By the definition of $t^*_x$ given in \eqref{eq:wave_breaking_time}, 
this quantity vanishes exactly at $t = t^*_x$.

\end{proof}

Although the result derived above holds for general 
$\sigma \in W^{1,2}$, we emphasize again that 
whenever $\sigma'$ is a constant, $Z(t,x)$ only 
depends on $x$ through $q_0$.  In the case $\sigma'$ 
is constant, a closer look at \eqref{eq:verhulst_integrating_factor} 
and \eqref{eq:q_on_characteristics} confirms that 
$Z(t,x)$ is independent of $x$, so if $q_0$ is 
constant over an interval $I\subseteq \R$, then 
for $x,y \in I$, until the blow-up time,
\begin{align}\label{eq:injective_on_stepfunctions}
\mathfrak{Q}(t,x) = \mathfrak{Q}(t,y),
\end{align}
just as in the deterministic setting. Therefore 
the point of the Lemma \ref{thm:technical} is that 
where we start with $q_0 = V_0\,\mathds{1}_{x \in [0,1]}$,  we have
$\mathfrak{Q}(t,x) = \mathfrak{Q}(t,\frac12)$ for $x \in [0,1]$, 
and $\mathfrak{u}(t,x)$ should be a constant 
multiple of the value of $u(t,x)$. We next explore 
finer properties concerning blow-up time.

\section{Wave-Breaking Behaviour}\label{sec:linear}

\subsection{Explicit calculation of the law of wave-breaking time using exponential Brownian motion}\label{sec:asian_options}

In this section we provide an expression for the 
distribution of the blow-up time $t^*_x$ defined 
in \eqref{eq:blow-up_asianoption}, under the 
condition that $\sigma'' = 0$, from which we are 
also assured of its measurability. This is of independent 
interest as it describes the (random) time of 
wave-breaking precisely.

Where $\sigma'$ is a constant, the blow-up condition 
\eqref{eq:blow-up_asianoption} simplifies to
\begin{equation*}
-\frac{1}{2}q_0(x)\int_0^{t^*_x} \exp\left( - \sigma' W(s)\right) \;\d s = 1.
\end{equation*}
Exponential Brownian functionals such as the one 
above have been studied in detail by Yor \cite{MR1854494} 
and others (see also the surveys \cite{MR2203675, MR2203676}).
The distribution for the blow-up can be explicitly 
computed:

Let
\begin{align}
A(t) &:=\frac{1}{2}\int_0^t \exp\left( - \sigma' W(s)\right) \;\d s,\notag\\
A^{(\mu)}(t) &:=\int_0^t \exp( 2 \mu s + 2 W(s) )\;\d s \label{eq:A_mu}.
\end{align}

In \cite[Theorem 4.1]{MR2203675} (originally derived 
in another form in \cite{MR1174378}) it was shown that 
\begin{align}\label{eq:misc2}
\mathbb{P}(A^{(\mu)}(t) \in \d \chi) 
= \frac{\d \chi}{\chi}\int_\R e^{\mu r - \mu^2 t/2} 
	\exp\Big(-\frac{1 + e^{2r}}{2\chi}\Big) \vartheta(e^r/\chi,t) \;\d r,
\end{align}
where the integral is taken against $\d x$, and 
\begin{equation*}
\vartheta(y,t) = \frac{y}{\sqrt{2 \pi^3 t}} 
	e^{\pi^2/(2t)}\int_0^\infty e^{-\xi^2/(2t)}
		e^{-y \cosh(\xi)}\sinh(\xi) \sin\bigg(\frac{\pi \xi}{t}\bigg)\;\d \xi.
\end{equation*}

We shall apply the explicit formula for the 
distribution of $A^{(\mu)}$ to give a similarly 
explicit formula for the distribution of the blow-up 
time $t^*_x$.

\begin{proposition}\label{thm:asianop_distb}
Let $t^*_x$ be defined as in \eqref{eq:blow-up_asianoption},
 and let $A^{(\mu)}$ be defined as in \eqref{eq:A_mu}. Then
\begin{align}\label{eq:blowuptime_distribution}
 \mathbb{P}(\{t^*_x \ge t\}) 
 =\mathbb{P}\bigg(\bigg\{A^{(0)}\bigg(\frac{(\sigma')^2t}{4}\bigg) 
 \le \frac{ -(\sigma')^2}{2q_0(x)}\bigg\}\bigg).
\end{align}
\end{proposition}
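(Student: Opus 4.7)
The plan is to invert the monotone definition of $t^*_x$ so that $\{t^*_x \ge t\}$ becomes a sublevel-set event for the Brownian functional $A(t)$, and then use Brownian scaling to match this functional in distribution with a rescaling of $A^{(0)}$.

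First I would note that \eqref{eq:blow-up_asianoption} has a finite solution only when $q_0(x)<0$, so I restrict to this case (otherwise $t^*_x = \infty$ by convention and both sides of \eqref{eq:blowuptime_distribution} equal $1$). The integrand $\exp(-\sigma' W(s))$ is $\mathbb{P}$-a.s.\ strictly positive and continuous in $s$, so $s \mapsto A(s)$ is strictly increasing pathwise; consequently the defining equation admits a pathwise unique root and
\[
\{t^*_x \ge t\} \;=\; \Big\{\, A(t) \le -\tfrac{1}{q_0(x)}\,\Big\}.
\]
This also exhibits $t^*_x$ as a first-passage time of a continuous $\{\mathscr{F}_t\}$-adapted process, establishing the measurability that is asserted just before the proposition.

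Second, I would perform the change of variables $u = (\sigma')^2 s/4$ inside $A(t)$ and apply Brownian scaling. Setting $\tilde W(u) := -\tfrac{\sigma'}{2}\, W(4u/(\sigma')^2)$ yields a standard Brownian motion (zero-mean Gaussian process with continuous paths, independent increments, and variance $u$), with the sign of $\sigma'$ harmlessly absorbed into the symmetry $W \mapsto -W$. Since $-\sigma' W(4u/(\sigma')^2) = 2\tilde W(u)$, I would obtain
\[
A(t) \;\stackrel{d}{=}\; \frac{2}{(\sigma')^2}\int_0^{(\sigma')^2 t/4} \exp\big(2\tilde W(u)\big)\,\d u \;=\; \frac{2}{(\sigma')^2}\, A^{(0)}\!\Big(\tfrac{(\sigma')^2 t}{4}\Big),
\]
with $A^{(0)}$ as in \eqref{eq:A_mu}. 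Substituting this identity into the sublevel-set probability from the previous step produces \eqref{eq:blowuptime_distribution} at once. There is no real obstacle; the entire argument is bookkeeping, the only delicate point being the sign of $\sigma'$ under scaling, which is handled by the $W \leftrightarrow -W$ symmetry.
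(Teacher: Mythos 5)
Your proposal is correct and follows essentially the same route as the paper: both arguments use the strict monotonicity and continuity of $A$ to rewrite $\{t^*_x \ge t\}$ as the sublevel event $\{A(t)\le -1/q_0(x)\}$, and then apply Brownian scaling to identify $A(t)$ in law with $\tfrac{2}{(\sigma')^2}A^{(0)}\bigl(\tfrac{(\sigma')^2 t}{4}\bigr)$. The additional remarks on the case $q_0(x)\ge 0$ and on the sign of $\sigma'$ are harmless refinements of the same computation.
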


\begin{proof}

In the following we use ``$\sim$'' to denote 
equality in law under $\mathbb{P}$.

We can use the scaling invariance of Brownian motion 
to show that
\begin{align}\label{eq:laws_scaling}
A(t) \sim \frac{2}{(\sigma')^2} A^{(0)}\bigg(\frac{(\sigma')^2t}{4}\bigg),
\end{align}
which gives us the distribution of $A(t)$ explicitly:
\begin{align*}
A^{(0)}(t) = \int_0^t \exp(2 W(s))\;\d s 
&= \int_0^{4t/(\sigma')^2} \exp\Big(-\sigma'\frac{-2}{\sigma'} W(\tau/(2/\sigma')^2) \Big) \;\d \frac{\tau}{4/(\sigma')^2}\\
&\sim  \frac{(\sigma')^2}{4}\int_0^{4t/(\sigma')^2}\exp(-\sigma' \tilde{W}(\tau)) \;\d {\tau}\\
&= \frac{(\sigma')^2}{2} A\big(4t/(\sigma')^2\big).
\end{align*}
Here $\tilde{W}$ is another standard Brownian motion, 
by the scaling invariance of the process.

We know that $A(0) = 0$ because it is an integral 
of a continuous process. It is also an increasing 
process because the integrand is positive. This 
implies that the supremum process 
$A^*(t) = \sup_{s \le t} A(s)$ is simply $A(t)$. 
Finally, $-\frac1{q_0(x)} > 0$. Therefore,
\begin{align*}
 \mathbb{P}(\{t^*_x \ge t\}) =\mathbb{P}(\{A(t) \le -\frac1{q_0(x)}\}).
\end{align*}
\end{proof}

\begin{remark}[Consistency in the limit 
$\sigma' \to 0$.]\label{rem:alium}
With regards to Remark \ref{rem:sigma_constant}, 
it is instructive to see that if $(\sigma')^2/4$ is 
treated as a parameter and taken to nought, then of course
\begin{equation*}
A(t) = \frac{1}{2}t,
\end{equation*}
or alternatively,
\begin{align*}
\lim_{(\sigma')^2/4 \to 0} \frac{2}{(\sigma')^2} 
	A^{(0)}\bigg(\frac{(\sigma')^2t}{4}\bigg) 
	= \frac{1}{2} \lim_{c \to 0} \frac{1}{c}\int_0^{ct} \exp(2W(s))\;\d s 
		= \frac{1}{2}t,
\end{align*}
by the Lebesgue differentiation theorem, and this 
matches the deterministic dynamics of wave-breaking 
exactly. This again verifies that the $\sigma' = 0$ 
setting cannot result in random blow-up.
\end{remark}

\subsection{Meeting time of characteristics}
We turn our attention now to the characteristics 
themselves, described by \eqref{eq:characteristics} 
and reproduced below:
\begin{align*}
X(t,x)
	 = x + \int_0^t & u(s,X(s,x))\,\d s 
	 	+\int_0^t \sigma(X(s,x)) \circ \d W(s).
\end{align*}

Consider again the explicit ``box'' example with initial condition
\begin{align}\label{eq:classic_box}
q_0 = V_0\, \mathds{1}_{[0,1]} \le 0, \qquad V_0 \in (-\infty,0).
\end{align}

We seek to prove that in the case $\sigma'' = 0$, 
wave-breaking only occurs when characteristics meet, 
and when characteristics meet, wave-breaking occurs. 
This allows us later to use characteristics to capture 
precisely the behaviour of wave-breaking.

As mentioned after \eqref{eq:injective_on_stepfunctions}, 
in the case of  ``box'' initial conditions \eqref{eq:classic_box}, 
by \eqref{eq:q_on_char_lin_sigma} and reproduced here:
\begin{align*}
\mathfrak{Q}(t,x) 
	 = \frac{e^{-\sigma' W(t)}}{\frac1{q_0(x)} + \frac12\int_0^t e^{-\sigma' W(s)}\;\d s },
\end{align*}
we see from the dependence on $x$ only via $q_0(x)$ 
that $\mathfrak{Q}$ is piecewise constant over $x$. 
In particular, this means $\mathfrak{Q}(t,x) = \mathfrak{Q}(t,\frac12)$ 
over $x \in (0,1)$. 

We shall show that it is possible to construct a 
function $U(t,x)$ from this information, and 
characteristics from $U(t,x)$ in the next section. 
For now we assume that characteristics as defined 
by $\d X = U(t,X) \;\d t + \sigma(X) \circ \d W$ 
exist 
and that $(\pd_x U)(t,X(t))$ --- the composition 
of $(\pd_x U)$ with a characteristic at the same time 
--- is equal to the process $\mathfrak{Q}$ above.
We shall establish this existence in 
Section \ref{sec:well_posedness1} below.

\begin{proposition}[Characteristics meet at wave-breaking]\label{thm:char_wave_breaking}
Let $\sigma'' = 0$, and suppose $X(t,x)$ is a strong solution
to the equation \eqref{eq:characteristics}, 
for which $(\pd_x u)(s,X(t,x)) =\mathfrak{Q}(t,x)$
for each $x \in \R$,  with $q_0 = V_0 \mathds{1}_{[0,1]}$.
Then the first meeting time of any two 
characteristics $X(t,x)$ and $X(t,y)$,
\begin{align*}
\tau_{x,y} := \inf\{t > 0  :    X(t,x) = X(t,y)\}, \qquad \quad x,y\in [0,1],
\end{align*}
 is $\mathbb{P}$-almost surely equal to the 
 wave-breaking time $t^*_{1/2}$ defined by 
 \eqref{eq:blow-up_asianoption}.
\end{proposition}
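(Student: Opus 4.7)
The plan is to derive a linear Stratonovich SDE for the spatial difference $D(t) := X(t,x) - X(t,y)$ between two characteristics starting in $(0,1)$ and to show that its explicit solution vanishes precisely at $t = t^*_{1/2}$.

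First I would subtract the defining equations for $X(t,x)$ and $X(t,y)$. The noise term simplifies immediately by $\sigma''=0$: writing $\sigma(z)=\sigma' z + c$, one has $\sigma(X(t,x)) - \sigma(X(t,y)) = \sigma' D$. For the drift $U(t,X(t,x)) - U(t,X(t,y))$, let $\tau_*$ denote the first time at which any two characteristics starting in $(0,1)$ coincide. For $t < \tau_*$, $X(t,\cdot)$ is injective on $(0,1)$ and, by pathwise continuity in the initial datum, maps $(0,1)$ onto an interval containing both $X(t,x)$ and $X(t,y)$. Combining the defining relation $(\pd_x U)(t,X(t,w)) = \mathfrak{Q}(t,w)$ with the piecewise-constancy \eqref{eq:injective_on_stepfunctions} of $\mathfrak{Q}(t,\cdot)$ on $(0,1)$ yields $\pd_x U(t,\cdot) \equiv \mathfrak{Q}(t,1/2)$ on this image, whence
\[
U(t,X(t,x)) - U(t,X(t,y)) = \int_{X(t,y)}^{X(t,x)} \pd_x U(t,z)\,\d z = \mathfrak{Q}(t,1/2)\, D(t).
\]

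This produces the linear Stratonovich equation $\d D = \mathfrak{Q}(t,1/2)\, D\,\d t + \sigma' D \circ \d W$, whose solution on $[0,\tau_*)$ reads
\[
D(t) = (x-y) \exp\!\Big(\int_0^t \mathfrak{Q}(s,1/2)\,\d s + \sigma' W(t)\Big).
\]
Recognising the exponential as $\mathfrak{u}(t,1/2)/\mathfrak{Q}(t,1/2)$ through \eqref{eq:pre_u_lin_sigma}, and then substituting \eqref{eq:u_simple_lin_sigma} for $\mathfrak{u}(t,1/2)$ and \eqref{eq:q_on_char_lin_sigma} for $\mathfrak{Q}(t,1/2)$, cancellation gives
\[
D(t) = (x-y)\, e^{\sigma' W(t)} \Big(1 + \tfrac{V_0}{2}\int_0^t e^{-\sigma' W(s)}\,\d s\Big)^2,
\]
which, by the defining equation \eqref{eq:blow-up_asianoption} for $t^*_{1/2}$, is strictly positive for $t < t^*_{1/2}$ and vanishes exactly at $t = t^*_{1/2}$.

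To conclude, were $\tau_* < t^*_{1/2}$ on an event of positive probability, the explicit formula above --- valid on $[0,\tau_*)$ --- would extend continuously to $\tau_*$ and give $D(\tau_*) \neq 0$ for every $x\neq y$ in $(0,1)$, contradicting the definition of $\tau_*$. Hence $\tau_* \ge t^*_{1/2}$, while the vanishing displayed above forces $\tau_* \le t^*_{1/2}$, so $\tau_{x,y} = t^*_{1/2}$ $\mathbb{P}$-almost surely for all $x,y \in (0,1)$; the endpoints $\{0,1\}\subset[0,1]$ are handled by a further continuity argument. The main obstacle is the circular interdependence between injectivity of $X(t,\cdot)$ on $(0,1)$ and the drift identity: justifying $\pd_x U(t,\cdot) \equiv \mathfrak{Q}(t,1/2)$ on the image $X(t,\cdot)\big|_{(0,1)}$ presupposes the non-meeting result one is trying to prove, so the analysis must be carried out on the stochastic interval $[0,\tau_*)$ and only afterwards upgraded by continuity at the stopping time $\tau_*$.
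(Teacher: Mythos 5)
Your proposal is correct and follows essentially the same route as the paper: subtract the two characteristic equations, use the constancy of $\mathfrak{Q}(t,\cdot)$ on $[0,1]$ and the linearity of $\sigma$ to obtain the linear Stratonovich SDE for $D(t)=X(t,x)-X(t,y)$, solve it explicitly, and identify the vanishing time of the exponential with $t^*_{1/2}$ via \eqref{eq:blow-up_asianoption}. Your explicit reduction of the exponential to $e^{\sigma' W(t)}\bigl(1+\tfrac{V_0}{2}\int_0^t e^{-\sigma' W(s)}\,\d s\bigr)^2$ (cf.~\eqref{eq:X1-X0}) and your localisation of the argument to $[0,\tau_*)$ make explicit two points the paper's proof leaves implicit, but the underlying argument is the same.
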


\begin{remark}
In particular, the explicit formula for the distribution 
of the meeting time of characteristics is also 
given by \eqref{eq:blowuptime_distribution}. 
In the case $\sigma'' \not \equiv 0$, we cannot 
immediately extract an explicit form for $u$ and 
thereby one for $X$ as in \cite{ABT2019}, 
because of nonlocality. 
\end{remark}

\begin{proof}

Recall that in the linear case, $\mathfrak{Q}$ is 
given via \eqref{eq:q_on_char_lin_sigma} as the process 
\begin{align*}
\mathfrak{Q}(t,x) 
	= \frac{e^{-\sigma' W(t)}}{\frac1{q_0(x)} +\frac12 \int_0^t e^{-\sigma' W(s)}\;\d s }.
\end{align*}
If $x,y \in [0,1]$, then
\begin{align}\label{eq:u_on_characteristicsI}
\frac{u(s,X(s,x)) - u(s,X(s,y))}{X(s,x) - X(s,y)} 
= \mathfrak{Q}(s,x)  = \mathfrak{Q}(s,y) 
	= \mathfrak{Q}(s,\frac12),
\end{align}
and similarly,
\begin{equation*}
\frac{\sigma(X(s,x)) - \sigma(X(s,y))}{X(s,x) - X(s,y)} 
		= \sigma',
\end{equation*}
as both $q$ and $\sigma'$ are constant in space 
over the interval $[X(s,0),X(s,1)]$. This leads us to
\begin{align*}
X(t,x) - X(t,y) &= (x - y) 
	+ \int_0^t \mathfrak{Q}(s,\frac12)\,(X(s,x) - X(s,y))\;\d s\\
			&\quad + \sigma' \int_0^t (X(s,x) - X(s,y))\circ\d W(s), 
\end{align*}
for $x,y \in [0,1]$. This is eminently solvable:
\begin{align}\label{eq:characteristics_difference}
X(t,x) - X(t,y)  
	= (x - y)\exp\Big( \int_0^t \mathfrak{Q}(s,\frac12) \;\d s + \sigma' W(t)\Big).
\end{align}

Since $\|q(t)\|_{L^2}^2$ is $\mathbb{P}$-almost 
surely bounded, the first meeting time $\tau_{0,1}$ 
cannot occur after the blow-up time $t^*_x$ of 
$\mathfrak{Q}(t,x)$ on the characteristic $X(t,x)$ 
(which, again, by \eqref{eq:injective_on_stepfunctions} 
is the same for any $x \in [0,1]$ --- we have chosen $x = \frac12$ for concreteness).
 The meeting time also cannot occur before the 
blow-up time, so that dissipation (instantaneous 
in the conservative case) cannot occur without wave-breaking. 

To see this it suffices to ask how the exponential in 
\eqref{eq:characteristics_difference} can possibly 
become nought --- it cannot become so before 
$\mathfrak{Q}(t,\frac12)$ blows up to $-\infty$.
\end{proof}

The fact that the exponential {\em does} become 
nought when this happens gives us a rate in time 
at which $ \mathfrak{Q}(s,x)$ blows up, which may 
otherwise have been difficult to extract from 
\eqref{eq:q_on_char_lin_sigma}.

\section{Existence of Solutions}\label{sec:existence}
\subsection{Solutions post wave-breaking: a discussion}\label{sec:continuations}

This subsection consists solely of a discussion on 
different ways characteristics, and solutions 
defined along them, can be continued past wave-breaking. 
We shall not limit ourselves to $\sigma'' = 0$. 
This is a question of cardinal importance because 
here as in the deterministic setting, non-uniqueness 
turns on there being various ways in which to 
continue solutions past wave-breaking. 
Accurately prescribing this continuation will allow 
us both to prove global existence of individual 
characteristics and thereby, on them, of $q$.

As noted following \eqref{eq:wave_breaking_time} 
in Lemma \ref{thm:qX_behaviours}, if $q_0(x) =0$, 
then along a characteristic starting at $x$, we expect $q(t,X(t,x)) = 0$. 
Therefore as in the deterministic setting, 
it should be possible to patch solutions together: 
That is, if $q_1(0)$, $q_2(0)$ are two $L^2(\R)$-valued 
random variables (or simply $L^2(\R)$ functions, 
if invariant over all but a measure zero set of $\Omega$) 
of compact and disjoint support on $\R$, then the 
solution $q$ with initial condition $q_0 = q_1(0) + q_2(0)$
 is simply $q(t) = q_1(t) + q_2(t)$. Furthermore, 
 from \eqref{eq:blow-up_asianoption}, the non-negativity 
 of the exponential function also shows that there 
 ought not to be blow-up along $X(t,x)$ if $q_0(x) \ge 0$. 
These heuristics imply that, as in the deterministic setting,
  ``box''-type initial conditions given \eqref{eq:classic_box}
 should retain special interest in the stochastic setting.

As discussed in Section \ref{sec:deterministic_background}
 there are two extreme ways by which solutions are 
 continued past wave-breaking. They give rise to 
 ``conservative'' and ``dissipative'' solutions.

In the deterministic setting, conservative solutions 
are constructed by simply extending the definition 
by explicit formulas to times $t > t^*_x$, as, in 
the example of the box, the explicit formula is 
undefined only at the point of wave-breaking, 
and reverts immediately to being well-defined 
thereafter. Seeing as $ t \mapsto \int_0^t \exp(-\sigma'(X(s,x)) W(s))\;\d s $ 
is $\mathbb{P}$-almost surely an increasing function 
in $t$ for each fixed $x$, simply allowing $q(t,X(t))$ 
to be be defined by \eqref{eq:q_on_characteristics} 
is similarly admissible in the stochastic setting 
(if the characteristics $X(t,x)$ are properly defined). 
Of course, continuity of $q(t)$ in suitable norms, 
and that of $X(t)$, requires proof. We also stress 
that there is no conservation of $L^2(\R)$ even 
in expectation in the general stochastic setting 
--- however, on taking $\sigma = 0$, we shall be able to recover 
the well-studied deterministic conservative solutions.

Alternatively, one can mandate dissipation by 
setting all concentrating $L^2(\R)$-mass to 
nought at wave-breaking. This is the 
``dissipative solution''.  In the stochastic setting 
(complete) dissipation can also be replicated, 
though this is again predicated on proofs of continuity, 
for example, of the ${H}^{-1}_\loc$ norms of $q$. 
Suppose all characteristics $X(t,z)$ for $z \in [x,y]$ 
meet at the stopping time $t^*_z$. This is a stopping 
time by Prop. \ref{thm:asianop_distb}. Assuming 
$\sigma'$ is locally bounded, as we always do, by 
the standard existence and uniqueness theorem for 
SDEs, these can be continued as  
\begin{align}\label{eq:dissipative_characteristics}
\d X(t^*_x + t, X(t^*_x,x)) 
	= \sigma (X(t^*_x + t, X(t^*_x,x))) \circ \d \tilde{W},
\end{align}
where $\tilde{W}$ is the Brownian motion starting 
at $t^*_x$, at the initial point $W(t^*_x)$.

\subsection{Well-posedness for box initial data}\label{sec:well_posedness1}

We focus again on the $\sigma''  =0$ case. 
Here we use the  ``box''-type initial condition 
\eqref{eq:classic_box} to illustrate the derivation 
of well-posedness, and the chief aspects of the 
general well-posedness theorem will appear here. 
We shall extend these results to the general data 
case in Section \ref{sec:pbt}. In this subsection all 
solutions refer to conservative or dissipative 
solutions-along-characteristics.

Recall that by \eqref{eq:injective_on_stepfunctions}, 
for the case described by \eqref{eq:classic_box} 
the wave-breaking time $t^*_x$ defined in 
\eqref{eq:wave_breaking_time} is uniform in 
$x \in[0,1]$. Thus, we denote this time simply by $t^*$:
\begin{align}\label{eq:wave_breaking_time_box}
-\frac{1}{2}q_0(\frac12)\int_0^{t^*} \exp\Big( - \int _0^s \sigma' \circ \d W(r)\Big) \;\d s = 1.
\end{align}
 The result of Lemma \ref{thm:technical} then states 
 that $\mathbb{P}$-almost surely, as $t \to t^*$ 
 from below,
\begin{align}\label{eq:ufrak_lin_sigma}
\mathfrak{u}(t;\omega) = \mathfrak{u}(t) 
:= \mathfrak{Q}(t,\frac12) \exp\Big(\int_0^t \mathfrak{Q}(s,\frac12)\;\d s + \sigma' W(t)\Big) \to 0,
\end{align}
where $\mathfrak{Q}(s,x)$, given explicitly by 
\eqref{eq:q_on_char_lin_sigma}, is also uniform 
in $x \in [0,1]$ because it only depends on $x$ 
through the initial condition.

Next we proceed to the focus of this subsection 
--- to resolve the primary questions of existence 
and uniqueness concerning the characteristics 
defined in \eqref{eq:characteristics}, including 
the continuation of them past wave-breaking. 
This will in turn lead us to different ways of 
continuing $\mathfrak{Q}(t,x)$ (given by 
\eqref{eq:q_on_char_lin_sigma} in the ``box''-type
 initial data case) past wave-breaking.

Our plan of attack is as follows 
(cf.~diagram at the end of Section \ref{sec:pde_sols}):
\begin{itemize}
\item[(i)] Postulate a $U(t,x)$, and use it to 
find characteristics $\XXX(t,x)$ satisfying 
\begin{align}\label{eq:postulated_characteristics}
\XXX(t,x) = x +  \int_0^t U(s,\XXX(s,x))\;\d s 
+ \int_0^t \sigma(\XXX(s,x)) \circ \;\d W(s).
\end{align}
\item[(ii)] Show that for $Q(t,x) = \pd_x U(t,x)$, 
the process $Q(t,\XXX(t,x))$ agrees with 
$\mathfrak{Q}(t,x)$, $\mathbb{P}$-almost surely, 
up to $t = t^*$, and remains a strong solution to 
\eqref{eq:qX_sde}: 
\begin{equation*}
\d \tilde{Q}(t)= - \frac{1}{2} \tilde{Q}^2(t)\;\d t 
		+ \sigma' \tilde{Q}(t)\circ \d W.
\end{equation*}

\item[(iii)] Finally we extend $U$ and $Q$ past 
wave-breaking in ways that preserve their continuity 
pointwise and in $H^{-1}_\loc(\R)$, respectively.
\end{itemize}

Our goal in this subsection is to prove the following two theorems:
\begin{theorem}[Conservative Solutions: Box Initial Data]\label{thm:box_cons}
Suppose $\sigma'' = 0$ and $q_0 = V_0\,\mathds{1}_{[0,1]}$, 
$V_0 \in \R$. There exists a $U \in C([0,\infty)\times\mathbb{R})$,
 $\mathbb{P}$-almost surely, absolutely continuous in $x$, 
 such that for each $x \in \R$, the following SDE is globally well-posed:
\begin{align*}
\XXX(t,x) 
	= x +  \int_0^t U(s,\XXX(s,x))\;\d s 
		+ \int_0^t \sigma(\XXX(s,x)) \circ \;\d W(s).
\end{align*}

For $Q(t,x) = \pd_x U(t,x)$, the process 
$Q(t,\XXX(t,x))$ agrees $\mathbb{P}$-almost surely 
with $\mathfrak{Q}(t,x)$, defined in \eqref{eq:q_on_char_lin_sigma}, 
up to $t = t^*$ and can be represented globally as
\begin{equation*}
Q(t,\XXX(t,x)) 
	= \frac{\exp(-\sigma' W(t))}{\frac1{V_0} + \frac12\int_0^t \exp(-\sigma' W(s))\;\d s}  \mathds{1}_{[0,1]}(x).
\end{equation*}
We have $Q(0,x)=q_0(x)$.
In particular, $\tilde{Q}(t)= Q(t,\XXX(t,x))$ 
satisfies \eqref{eq:qX_sde2} strongly and globally: 
\begin{equation*}
\d \tilde{Q}(t)= - \frac{1}{2} \tilde{Q}^2(t)\;\d t 
+ \sigma' \tilde{Q}(t)\circ \d W.
\end{equation*}
\end{theorem}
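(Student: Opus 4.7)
The plan is to implement the schematic at the end of Section~\ref{sec:pde_sols}, constructing $U$ explicitly out of the processes $\mathfrak{Q}(t,\tfrac12)$ of \eqref{eq:q_on_char_lin_sigma} and $\mathfrak{u}(t)$ of \eqref{eq:ufrak_lin_sigma}, pinned to the line by a single ``left-edge'' characteristic. First, $X_0(t)$ should satisfy $dX_0 = \sigma(X_0)\circ dW$ with $X_0(0) = 0$, a linear Stratonovich SDE that is globally well-posed since $\sigma$ is linear. Writing $D(t) := 1/V_0 + \tfrac12 \int_0^t e^{-\sigma' W(s)}\,ds$, so that $\mathfrak{Q}(t,\tfrac12) = e^{-\sigma' W(t)}/D(t)$ and $D(t^*) = 0$, define the right edge by $X_1(t) := X_0(t) + e^{\sigma' W(t)}(D(t)/D(0))^2$. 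A short Stratonovich calculation shows that $X_1 - X_0$ is $\mathbb{P}$-a.s.\ continuous, non-negative, vanishes only at $t = t^*$, and satisfies $d(X_1 - X_0) = \mathfrak{Q}(t,\tfrac12)(X_1 - X_0)\,dt + \sigma'(X_1 - X_0)\circ dW$, in accord with \eqref{eq:characteristics_difference}. I then set
\[
U(t,x) := \begin{cases} 0, & x \le X_0(t), \\ (x - X_0(t))\,\mathfrak{Q}(t,\tfrac12), & X_0(t) < x \le X_1(t), \\ \mathfrak{u}(t), & x > X_1(t), \end{cases}
\]
with the middle and top pieces matching at $X_1$ exactly via the simplification \eqref{eq:u_simple_lin_sigma} of Lemma~\ref{thm:technical}, so $U \in C([0,\infty) \times \mathbb{R})$ is piecewise linear (hence absolutely continuous) in $x$, and $Q(t,x) := \partial_x U(t,x) = \mathfrak{Q}(t,\tfrac12)\mathds{1}_{(X_0(t), X_1(t))}(x)$ with $Q(0,\cdot) = q_0$.

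With $U$ in hand, I would analyse the SDE \eqref{eq:postulated_characteristics} in three regimes. For $x \le 0$ the drift vanishes once $X(t,x)\le X_0(t)$ (which is automatic by comparison for linear $\sigma$), and the SDE is noise-only linear, hence globally well-posed; for $x \ge 1$ the drift equals the adapted continuous process $\mathfrak{u}(t)$, and the SDE is again globally well-posed by linearity of $\sigma$. For $x \in (0,1)$ I would verify directly, using the displayed SDE for $X_1 - X_0$ and the Stratonovich product rule, that the ansatz $X(t,x) = X_0(t) + x(X_1(t) - X_0(t))$ solves \eqref{eq:postulated_characteristics} (uniqueness on $[0,t^*)$ is standard from local Lipschitz-in-$x$ of $U$ inside the strip). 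By construction $Q(t,X(t,x)) = \mathfrak{Q}(t,\tfrac12)\mathds{1}_{[0,1]}(x)$, which by \eqref{eq:injective_on_stepfunctions} agrees with $\mathfrak{Q}(t,x)$ on $[0,t^*)$ and yields the explicit global representation asserted. Finally, $\tilde Q(t) = Q(t,X(t,x))$ satisfying \eqref{eq:qX_sde2} strongly and globally follows by running the Stratonovich computation in the proof of Lemma~\ref{thm:qX_behaviours} in reverse on the explicit formula.

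The main obstacle will be the conservative continuation through $t = t^*$: at that instant the strip $[X_0(t), X_1(t)]$ collapses to a point, so the ansatz merges all characteristics originating in $[0,1]$, and $\mathfrak{Q}(t,\tfrac12)\to -\infty$. The rescue is threefold: (i) $X_1 - X_0 = e^{\sigma' W}(D/D(0))^2$ is continuous through $t^*$ and reopens immediately thereafter; (ii) $\mathfrak{u}(t)$ vanishes at $t^*$ by Lemma~\ref{thm:technical}, so the three pieces of $U$ collapse smoothly to $0$ at $t^*$, preserving joint continuity of $U$; and (iii) although $\mathfrak{Q}(t,\tfrac12)$ is unbounded near $t^*$, the product $\mathfrak{Q}(t,\tfrac12)(X_1(t) - X_0(t))$ stays bounded, so the Stratonovich integrals entering the SDEs for $X(t,x)$ and for $\tilde Q$ remain $\mathbb{P}$-a.s.\ finite through the instantaneous degeneracy. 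Measurability of $t^*$ in the filtration, needed so that this restart is adapted, is inherited from Proposition~\ref{thm:asianop_distb}.
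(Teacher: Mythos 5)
Your construction is essentially the paper's own proof: you build the two edge characteristics first, define $U$ as the piecewise-linear interpolant with slope $\mathfrak{Q}(t,\tfrac12)$ vanishing at $t^*$ by Lemma~\ref{thm:technical}, verify the affine ansatz $\XXX(t,x)=X_0(t)+x(X_1(t)-X_0(t))$ for the interior characteristics, and continue through the collapse exactly as in \eqref{eq:fraction_constant}. The only cosmetic difference is that you give $X_1$ by the closed formula $X_0+e^{\sigma'W}(D/D(0))^2$ and check the difference SDE, whereas the paper defines $X(t,1)$ as the strong solution of \eqref{eq:pre_tilde_X} and then derives the same expression \eqref{eq:X1-X0}; these are equivalent.
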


Similarly, for the dissipative solutions-along-characteristics, we have:
\begin{theorem}[Dissipative Solutions: Box Initial Data]\label{thm:box_dissp}
Suppose $\sigma'' = 0$ and $q_0 = V_0\,\mathds{1}_{[0,1]}$, 
$V_0 \in \R$. There exists a $U \in C((0,\infty)\times \mathbb{R})$, 
$\mathbb{P}$-almost surely, absolutely continuous in $x$, 
such that for each $x \in \R$, the SDE
\begin{align*}
\XXX(t,x) = x +  \int_0^t U(s,\XXX(s,x))\;\d s +
 		\int_0^t \sigma(\XXX(s,x)) \circ \;\d W(s)
\end{align*}
 is globally well-posed.

For $Q(t,x) = \pd_x U(t,x)$, the process $Q(t,\XXX(t,x))$ 
agrees $\mathbb{P}$-almost surely with $\mathfrak{Q}(t,x)$ 
as given by \eqref{eq:q_on_char_lin_sigma}, 
up to $t = t^*$ and can be represented globally in time as
\begin{align}\label{eq:box_dissp_sol}
Q(t,\XXX(t,x)) 
	= \begin{cases}\frac{\exp(-\sigma' W(t))}{\frac1{V_0} + \frac12\int_0^t \exp(-\sigma' W(s))\;\d s} 
		\mathds{1}_{[0,1]}(x), & t < t^*,\\
		0,&t > t^*.\end{cases}
\end{align}
We have $Q(0,x)=q_0(x)$.
In particular, $\tilde{Q}(t)= Q(t,\XXX(t,x))$ 
satisfies \eqref{eq:qX_sde2} strongly and globally (in time):
\begin{equation*}
\d \tilde{Q}(t)= - \frac{1}{2} \tilde{Q}^2(t)\;\d t + \sigma' \tilde{Q}(t)\circ \d W.
\end{equation*}
\end{theorem}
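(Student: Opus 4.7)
The proof follows the three-step construction laid out before Theorem~\ref{thm:box_cons}: postulate $U(t,x)$, derive characteristics $\XXX(t,x)$ from it, and verify that $Q(t,\XXX(t,x)) = \mathfrak{Q}(t,x)$ up to the wave-breaking time $t^*$. The dissipative prescription from Section~\ref{sec:continuations} dictates that past $t^*$ we simply set $U \equiv 0$ and hence $Q \equiv 0$. Since $\mathfrak{Q}(t,x)$ is independent of $x \in [0,1]$ (equal to $\mathfrak{Q}(t,\tfrac12)$ there) for box data, we expect $U(t,\cdot)$ to be piecewise affine on three regions separated by two distinguished characteristics $X^L(t) := \XXX(t,0)$ and $X^R(t) := \XXX(t,1)$ before $t^*$, and identically zero afterwards.

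First, I would construct $X^L$ and $X^R$ directly, without reference to $U$ elsewhere. Since $U$ should vanish for $x \le X^L$, the process $X^L$ must satisfy $\d X^L = \sigma(X^L) \circ \d W$ with $X^L(0)=0$; this is globally well-posed because $\sigma$ is affine. Since $U$ should equal the $\{\mathscr{F}_t\}$-adapted process $\mathfrak{u}(t) := V_0\bigl(1 + \tfrac{V_0}{2}\int_0^t e^{-\sigma' W(s)}\,\d s\bigr)$ for $x \ge X^R$ by Lemma~\ref{thm:technical}, the process $X^R$ must solve $\d X^R = \mathfrak{u}(t)\,\d t + \sigma(X^R) \circ \d W$ with $X^R(0)=1$, which is again globally well-posed. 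A direct computation using affinity of $\sigma$ yields
\begin{equation*}
X^R(t) - X^L(t) = \exp\!\Bigl(\int_0^t \mathfrak{Q}(s,\tfrac12)\,\d s + \sigma' W(t)\Bigr),
\end{equation*}
which, by Lemma~\ref{thm:technical}, vanishes exactly at $t^*$ together with $\mathfrak{u}(t^*) = 0$.

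I then define
\begin{equation*}
U(t,x) := \begin{cases} 0, & x \le X^L(t), \\ \mathfrak{Q}(t,\tfrac12)\bigl(x - X^L(t)\bigr), & X^L(t) \le x \le X^R(t), \\ \mathfrak{u}(t), & x \ge X^R(t), \end{cases}
\end{equation*}
for $t < t^*$, and $U(t,x) \equiv 0$ for $t \ge t^*$. Continuity of $U$ on $(0,\infty)\times\R$ across $t = t^*$ follows from $\mathfrak{u}(t) \to 0$ and $X^R(t) - X^L(t) \to 0$, while absolute continuity in $x$ is built into the construction. The derivative $Q(t,x) := \partial_x U(t,x)$ equals $\mathfrak{Q}(t,\tfrac12)\mathds{1}_{[X^L(t), X^R(t)]}(x)$ for $t < t^*$ and $0$ for $t \ge t^*$, so in particular $Q(0,x) = V_0 \mathds{1}_{[0,1]}(x) = q_0(x)$. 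Because $U$ is pathwise Lipschitz in $x$, the characteristic SDE $\d\XXX(t,x) = U(t,\XXX)\,\d t + \sigma(\XXX)\circ \d W$ admits a unique strong solution for each $x \in \R$ on $[0,t^*)$; a comparison argument using affinity of $\sigma$ and the piecewise affinity of $U$ confirms $\XXX(t,x) \in [X^L(t), X^R(t)]$ for $x \in [0,1]$, so $Q(t,\XXX(t,x)) = \mathfrak{Q}(t,\tfrac12)$ agrees with \eqref{eq:q_on_char_lin_sigma}. Past $t^*$ the SDE reduces to $\d\XXX = \sigma(\XXX)\circ \d W$, which is globally well-posed, and $Q(t,\XXX(t,x)) = 0$.

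The main obstacle is handling the filtration-compatibility at $t^*$ and interpreting the ``strongly and globally'' statement for the SDE \eqref{eq:qX_sde2} correctly. Measurability of $t^*$ as an $\{\mathscr{F}_t\}$-stopping time is supplied by Proposition~\ref{thm:asianop_distb}; the strong Markov property then allows the continuation of $\XXX$ past $t^*$ via $\d\XXX = \sigma(\XXX)\circ \d W$ from the random merging point $X^L(t^*) = X^R(t^*)$. For the SDE \eqref{eq:qX_sde2}, because $\mathfrak{Q}(t,\tfrac12) \to -\infty$ as $t \nearrow t^*$, the equation cannot hold pointwise across the singular time; the statement must be understood in the piecewise strong sense, holding separately on $[0,t^*)$ (by Lemma~\ref{thm:qX_behaviours}, since $\mathfrak{Q}$ is by construction the explicit strong solution there) and on $(t^*,\infty)$ (trivially, both sides vanishing), which is precisely the dissipative prescription.
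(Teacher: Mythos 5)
Your proposal is correct and follows essentially the same route as the paper: construct the two endpoint characteristics $\XXX(t,0)$ and $\XXX(t,1)$ first, define the piecewise affine $U$ from them (the paper's \eqref{eq:U_construction}), truncate $U$ to zero at the stopping time $t^*$ using Lemma \ref{thm:technical} for continuity and Proposition \ref{thm:asianop_distb} for measurability, and continue $\XXX$ past $t^*$ by the pure noise SDE via the strong Markov property. The only cosmetic difference is that the paper pins down the interior characteristics by the explicit formula \eqref{eq:fraction_constant} rather than your comparison argument, and your closing remark that \eqref{eq:qX_sde2} holds in a piecewise sense across $t^*$ is consistent with the paper's own phrasing.
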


We relegate the computation of $H^{-1}_\loc$ to 
Section \ref{sec:pbt} where it is done in the general context 
(see also Remark \ref{rem:temp_cont_QH-1}). 
Theorems \ref{thm:box_cons} and \ref{thm:box_dissp} 
are proved in similar fashion and we shall present 
one in full and sketch out the other. In both of them 
the bulk of the work rests on a proper construction 
of $U$. Obviously in both proofs we shall be making 
heavy use of \eqref{eq:q_on_char_lin_sigma} and 
on our main technical result, Lemma \ref{thm:technical}.

For dissipative solutions we can also show the 
one-sided Oleinik-type estimate (cf.~discussion 
following Definition \ref{def:weak_diss_sols}):
\begin{corollary} [One-Sided Estimate: Box Initial Data]\label{thm:one_sided_estm}
Suppose $\sigma'' = 0$ and $q_0= V_0\,\mathds{1}_{[0,1]}$, $V_0 \in \R$. 
Then the dissipative solution $Q(t,x)$ with initial 
condition $Q(0) = q_0$ satisfies $\mathbb{P}$-almost 
surely the following one-sided bound:
\begin{align*}
Q(t,x) \le \frac{\exp(-\sigma' W(t))}{\frac1{\max({V_0},0)} +\frac12 \int_0^t \exp(-\sigma' W(s))\;\d s}.
\end{align*}
\end{corollary}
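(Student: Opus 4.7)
The plan is to read off the statement directly from the explicit representation of $Q$ provided by Theorem~\ref{thm:box_dissp}, together with a sign analysis of the denominator governed by $V_0$, interpreting the right-hand side with the convention that $1/\max(V_0,0) = +\infty$ whenever $V_0 \le 0$ (so that the stated bound is then $0$).

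First, I would record the spatial structure of $Q(t,\cdot)$ inherited from Theorem~\ref{thm:box_dissp}. Outside the image $\XXX(t,[0,1])$ the solution vanishes (characteristics started at points where $q_0=0$ carry $Q\equiv 0$), while on this image the value is given pointwise by the explicit formula \eqref{eq:box_dissp_sol}, with the formula being nontrivial only up to the common wave-breaking time $t^\ast$ determined by \eqref{eq:wave_breaking_time_box}.

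Second, I would split according to the sign of $V_0$. In the case $V_0>0$, the defining equation \eqref{eq:wave_breaking_time_box} has no solution, since its integrand is $\mathbb{P}$-a.s.\ positive; hence $t^\ast=\infty$ and for every $x$ in the evolved box,
\[
Q(t,x) \;=\; \frac{\exp(-\sigma'W(t))}{\tfrac{1}{V_0} + \tfrac12\int_0^t \exp(-\sigma'W(s))\,\d s},
\]
which is exactly the asserted right-hand side (and off the box $Q=0$ is dominated by this positive quantity). The case $V_0=0$ is trivial since $q_0\equiv 0$ forces $Q\equiv 0$. In the case $V_0<0$, the right-hand side equals $0$, so the bound to establish is $Q(t,x)\le 0$: inside the evolved box and before $t^\ast$ the denominator $\tfrac{1}{V_0}+\tfrac12\int_0^t \exp(-\sigma'W(s))\,\d s$ starts at $1/V_0<0$ and, by the very definition of $t^\ast$ in \eqref{eq:wave_breaking_time_box}, is strictly negative on $[0,t^\ast)$, so $Q<0$ there; after $t^\ast$, the dissipative rule gives $Q=0$; outside the evolved box, $Q=0$. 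In all three cases the inequality holds.

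The only mild obstacle is notational: interpreting the factor $1/\max(V_0,0)$ as $+\infty$ when $V_0\le 0$ so that the inequality is well-posed in that regime. Once the convention is fixed, the argument is a direct sign analysis of \eqref{eq:box_dissp_sol}, with no further estimation needed.
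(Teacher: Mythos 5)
Your proof is correct and takes essentially the same route as the paper, which simply reads the bound off the explicit representation \eqref{eq:box_dissp_sol} from Theorem~\ref{thm:box_dissp}. Your added case analysis on the sign of $V_0$ (with the convention $1/\max(V_0,0)=+\infty$ for $V_0\le 0$) and the observation that the denominator stays strictly negative on $[0,t^\ast)$ when $V_0<0$ just makes explicit what the paper leaves implicit.
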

Because of \cite[Theorem 4.1]{MR2203675}, 
the law of the right-hand side is known.

\medskip
We now present the proofs of the above theorems, starting with the conservative case.
\begin{proof}[Proof of Theorem \ref{thm:box_cons}]
We divide the proof into two parts:
\begin{itemize}
\item[(1)] We postulate $U$ and construct globally 
(in time) extant characteristics $\XXX(t,x)$.
\item[(2)] We show that $(\pd_x U)(t,\XXX(t,x))$ 
satisfies \eqref{eq:qX_sde2}.
\end{itemize}

\smallskip
{\em 1. Construction of $U$ and global characteristics.}

\smallskip
Using \eqref{eq:q_on_char_lin_sigma}, 
$\mathfrak{Q}(t,x)$ is constant over $x \in [0,1]$ 
for time up to $t = t^*_{1/2}$ 
($= t^*_0 = t^*_1$ by this constancy). 
Therefore we simply construct $U(t, \dott)$ to be 
the piecewise linear function taking the value 
$U(t,x) = 0$ for  $x < \XXX(t,0)$ and 
$U(t,x) = \mathfrak{Q}(t,\frac12) (\XXX(t,1) - \XXX(t,0))$
 for $ x > \XXX(t,1)$.
(Because $U(t)$ is piecewise linear by construction, 
$Q(t)$ will be constant between $\XXX(t,0)$ and 
$\XXX(t,1)$.) This definition can be extended 
to all times $t \ge 0$ by taking $\mathfrak{Q}(s,\frac12)$ 
in the definition of $\mathfrak{u}$ 
(cf.~\eqref{eq:ufrak_lin_sigma}) to mean:
\begin{align*}
\mathfrak{Q}(t,\frac12) 
	= \frac{\exp(-\sigma' W(t))}{\frac1{V_0} + \frac12\int_0^t \exp(-\sigma' W(s))\;\d s}.
\end{align*}

The only difficulty is that $U$ so defined 
depends on $\XXX(t,0)$ and $\XXX(t,1)$ 
in a circular fashion. To rectify this circularity, 
we take one more step back and define characteristics 
$\XXX(t,0)$ and $\XXX(t,1)$, 
which will later self-evidently be solutions to 
\eqref{eq:postulated_characteristics} at 
$x = 0$ and $x = 1$.

For $x \le  0$ or $x \ge 1$, set
\begin{align}\label{eq:pre_tilde_X}
\XXX(t,x) 
= x + \mathds{1}_{\{x \ge 1\}} \int_0^t  \mathfrak{u}(s) \;\d s 
	+ \int_0^t \sigma(\XXX(s,x)) \circ \;\d W(s),
\end{align}
which has a global unique strong solution 
in the space of adapted process with $\mathbb{P}$-almost surely 
continuous paths by the basic theorem 
on well-posedness of SDEs (see, e.g., \cite[Thm. IX.II.2.4]{MR1725357}), and by the boundedness of 
$\mathfrak{u}$ ensured by the formula 
\eqref{eq:u_simple_lin_sigma}. The function $\mathfrak{u}$ here
has been defined explicitly in \eqref{eq:u_simple_lin_sigma}.

We now postulate the ansatz $U(t,x)$ for $u(t,x)$:
\begin{align}\label{eq:U_construction}
U(t,x) 
	= \begin{cases}0, & x \le \XXX(t,0),\\
		\frac{x - \XXX(t,0)}{\XXX(t,1) - \XXX(t,0)} \mathfrak{u}(t),
			 &x \in (\XXX(t,0),\XXX(t,1)),\\
			\mathfrak{u}(t), & x \ge \XXX(t,1),\end{cases}
\end{align}
where $U$ is defined pointwise in $(t,x)$,
 $\mathbb{P}$-almost surely.

In the $\sigma'' = 0$ case, $\mathfrak{u}(t)$ 
(given in \eqref{eq:pre_u_lin_sigma}) 
does {\em not} depend on any characteristic.

Now we define $\XXX(t,x)$ by the equation
\begin{align} \label{eq:chararcteristics_tilde}
\XXX(t,x) 
	= x +  \int_0^t U(s,\XXX(s,x))\;\d s 
		+ \int_0^t \sigma(\XXX(s,x))\circ\d W.
\end{align}
(We re-use the symbol $\XXX$ from above as 
this equation simply augments equation \eqref{eq:pre_tilde_X}.) 
By taking a spatial derivative, we see that this 
SDE also has an explicit solution: for $x \in [0,1]$, 
$t < t^*$,
\begin{align*}
\frac{\partial \XXX(t,x)}{\partial x} 
= e^{\sigma' W(t)} \Big(1 
	+ \int_0^t \frac{e^{-\sigma' W(s)}}{\XXX(s,1) 
		- \XXX(s,0)}\mathfrak{u}(s) \;\d s\Big),
\end{align*}
and consequently,
\begin{align*}
\XXX(t,x) = \XXX(t,0) 
	+ x e^{\sigma' W(t)}\Big(1 
	+ \int_0^t \frac{e^{-\sigma' W(s)}}{\XXX(s,1) 
		- \XXX(s,0)}\mathfrak{u}(s) \;\d s\Big) 
			\qquad x \in [0,1],\, t < t^*.
\end{align*}

Again, by direct differentiation of the equation 
above, we can see that the derivative 
$\pd \XXX/\pd x$ is independent of $x$,
\begin{align}\label{eq:dxdx21}
\frac{\pd \XXX(t,x)}{\pd x} &= \XXX(t,1) - \XXX(t,0) .
\end{align}
It is also signed, since alternatively to 
\eqref{eq:u_simple} we also have
\begin{align}
\mathfrak{u}(s) 
&= \mathfrak{Q}(s,\frac12) 
	\exp\Big(\int_0^s \mathfrak{Q}(r,\frac12)\;\d r + \sigma'W(s)\Big)\notag\\
 &= e^{\sigma' W(s)}\frac{\d}{\d s} 
 	\exp\Big(\int_0^s \mathfrak{Q}(r,\frac12)\;\d r\Big),\label{eq:ufrak_alternative}
\end{align}
so solving the SDE for $\XXX(t,1) - \XXX(t,0)$,
\begin{align}
\XXX(t,1) - \XXX(t,0)  
&= e^{\sigma' W(t)} \Big( 1 + \int_0^t e^{-\sigma' W(s)} \mathfrak{u}(s)\;\d s\Big) \notag\\
&=  \exp\Big(\int_0^t \mathfrak{Q}(s,\frac12)\;\d s  + \sigma' W(t)\Big) \ge  0\label{eq:X1-X0},
\end{align}
with strict inequality except at $t = t^*$.

We record the fact that characteristics do not 
cross except at wave-breaking as a lemma, see Lemma \ref{thm:stoch_dafermos_box} after this proof. 

The global well-posedness for the end-point 
characteristics $\XXX(t,0)$ and $\XXX(t,1)$, 
and  \eqref{eq:dxdx21}, allow us to extend 
$\XXX(t,x)$ globally (beyond $t^*$) via
\begin{align}\label{eq:fraction_constant}
\frac{\XXX(t,x) - \XXX(t,0)}{\XXX(t,1) - \XXX(t,0)}  = x.
\end{align}

\smallskip
{\em 2. Verifying properties of $\pd_x U$.}

Setting
\begin{equation*}
Q(t,x) = \pd_x U(t,x),
\end{equation*}
we shall proceed to show that up to $t = t^*$,
 $\mathbb{P}$-almost surely,
\begin{equation*}
Q(t,\XXX(t,x)) = \mathfrak{Q}(t,x),
\end{equation*} 
and that we have the (global) explicit formula:
\begin{equation*}
Q(t,\XXX(t,x)) 
	= \frac{\exp(-\sigma' W(t))}{\frac1{V_0} +\frac12 \int_0^t \exp(-\sigma' W(s))\;\d s} 
			\mathds{1}_{x \in [0,1]}.
\end{equation*}

By construction $U$ was built by integrating 
$\mathfrak{Q}(t,\frac12)$ in time. 
Using \eqref{eq:U_construction}, 
\eqref{eq:ufrak_lin_sigma}, and 
\eqref{eq:X1-X0} directly, it comes as no surprise that:
\begin{align*}
\pd_x U(t,x) 
 &= \begin{cases}0, & x \le \XXX(t,0),\\
 	\frac{\mathfrak{u}(t)}{\XXX(t,1) - \XXX(t,0)}, &x \in (\XXX(t,0),\XXX(t,1)),\\
 		0, & x \ge \XXX(t,1),\end{cases} \\
&=\begin{cases}0, & x \le \XXX(t,0),\\
	\mathfrak{Q}(t,\frac12), &x \in (\XXX(t,0),\XXX(t,1)),\\
		0, & x \ge \XXX(t,1),\end{cases}
\end{align*}
which, by differentiating directly, yields
\begin{equation*}
\d Q(t,\XXX(t,x)) 
	= -\frac{1}{2}Q^2(t,\XXX(t,x))\;\d t 
		+ \sigma' Q(t,\XXX(t,x))\circ \d W.
\end{equation*}

We emphasise once again that no conservation 
of any norms of $Q$ is proven or even claimed. 
\end{proof}

We state for clarity the following result, which simply re-establishes 
Prop. \ref{thm:char_wave_breaking} without the unproven assumption concerning 
the existence of characteristics.
\begin{lemma}[Stochastic Flow of Diffeomorphisms before Wave-breaking]\label{thm:stoch_dafermos_box}
Let $q_0 = V_0\,\mathds{1}_{[0,1]}$ be a 
``box''-type initial data. Let $\sigma'' = 0$ 
and $\{\XXX(t,x)\}_{x \in \R}$ be defined by 
\eqref{eq:U_construction}, \eqref{eq:chararcteristics_tilde}. 
Then up to $t^*$ defined by \eqref{eq:wave_breaking_time_box}, 
$\phi_t \colon x \mapsto \XXX(t,x)$ is a flow 
(i.e., a one-parameter semi-group in $t$) of 
diffeomorphisms of $\R$.

And for given $(t,x)$, $t \not= t^*$, there is a 
unique random variable $y:\Omega \to \R$ for 
which $\XXX(t,y) = x$.
\end{lemma}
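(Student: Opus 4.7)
My plan is to verify the conclusions in three steps corresponding to the three regions $(-\infty, 0]$, $[0,1]$, $[1, \infty)$ on which $U$ takes different forms by \eqref{eq:U_construction}, and then to glue the pieces together by continuity in $x$.

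On the middle region, I will invoke the explicit formula derived in the proof of Theorem \ref{thm:box_cons}: for $x \in [0,1]$ and $t \in [0, t^*)$,
\[
\XXX(t,x) = \XXX(t,0) + x\bigl(\XXX(t,1) - \XXX(t,0)\bigr),
\]
with strictly positive slope $\XXX(t,1) - \XXX(t,0) = \exp\bigl(\int_0^t \mathfrak{Q}(s,\tfrac12)\,\d s + \sigma' W(t)\bigr) > 0$ by \eqref{eq:X1-X0}; this is an affine map with positive slope, hence a smooth diffeomorphism of $[0,1]$ onto its image. On $(-\infty, 0]$, $U$ vanishes by \eqref{eq:U_construction}, so $\XXX$ solves the autonomous linear SDE $\d \XXX = \sigma(\XXX) \circ \d W$; since $\sigma$ is affine (globally Lipschitz with constant derivative), standard stochastic flow theory (e.g., \cite[Ch.~4]{MR2796837}) provides a flow of smooth diffeomorphisms. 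On $[1, \infty)$, $U(t, \XXX(t,x)) = \mathfrak{u}(t)$ is a bounded adapted process on every $[0, t^* - \delta]$ by the explicit formula \eqref{eq:u_simple_lin_sigma}, so $\XXX$ satisfies $\d \XXX = \mathfrak{u}(t)\,\d t + \sigma(\XXX) \circ \d W$, again yielding a flow of diffeomorphisms.

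To glue, I will observe that $U(t, \cdot)$ is globally Lipschitz on $\R$ for each fixed $t \in [0, t^* - \delta]$, since its slope on $(\XXX(t,0), \XXX(t,1))$ equals $\mathfrak{Q}(t, \tfrac12)$, which is bounded on this strict subinterval. Continuity of the SDE solution in the initial datum, combined with the strict monotonicity already established on each piece and the evident matching $U(t, \XXX(t,0))=0$ and $U(t, \XXX(t,1)) = \mathfrak{u}(t)$ at the boundary points, joins the three pieces into a strictly increasing homeomorphism $\phi_t\colon \R \to \R$. The flow/semigroup property follows from pathwise uniqueness for \eqref{eq:chararcteristics_tilde}.

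For the final uniqueness claim, injectivity of $\phi_t$ gives uniqueness of $y$ immediately for $t < t^*$, and measurability of $y$ in $\omega$ is inherited from joint measurability of the inverse in $(x, \omega)$. For $t > t^*$, I will verify $\XXX(t,1) - \XXX(t,0) > 0$ by integrating \eqref{eq:u_simple_lin_sigma} directly to obtain the identity $\XXX(t,1) - \XXX(t,0) = e^{\sigma' W(t)} \mathfrak{u}(t)^2/V_0^2$, which is positive precisely when $\mathfrak{u}(t) \neq 0$, i.e., when $t \neq t^*$. The chief obstacle is controlling the Lipschitz constant of $U(t, \cdot)$ as $t \nearrow t^*$, where $\mathfrak{Q}(t,\tfrac12) \to -\infty$; this is purely local and handled by working on $[0, t^* - \delta]$ for arbitrary $\delta > 0$ before passing to the limit.
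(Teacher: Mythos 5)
Your proof is correct and follows essentially the same route as the paper, which obtains the lemma directly from the explicit formulas in the proof of Theorem~\ref{thm:box_cons}: the positivity of $\pd \XXX/\pd x = \XXX(t,1)-\XXX(t,0) = \exp\bigl(\int_0^t \mathfrak{Q}(s,\tfrac12)\,\d s + \sigma' W(t)\bigr)$ for $t\neq t^*$ (your identity $\XXX(t,1)-\XXX(t,0)=e^{\sigma' W(t)}\mathfrak{u}(t)^2/V_0^2$ checks out), together with the affine structure of $\XXX(t,\cdot)$ on each of the three regions and the extension \eqref{eq:fraction_constant}. Your region-by-region gluing is just a more explicit write-up of the same argument; note only that the glued map is a strictly increasing Lipschitz homeomorphism whose derivative jumps at $x=0,1$, so ``diffeomorphism'' is to be read in the same piecewise sense the paper intends.
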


\smallskip 
We now turn to the proof in the  dissipative case.
\begin{proof}[Proof of Theorem \ref{thm:box_dissp}]

First we notice that by construction and 
Lemma \ref{thm:technical}, at the wave-breaking 
time $t^*$, $U(t^*,\dott) \equiv 0$, $\mathbb{P}$-almost surely. 
Since we have unique paths up to $t^*$, the pair of equations
\begin{equation}\left\{
\begin{aligned}
\d \XXX(t,x)&= U(t,\XXX(t,x))\;\d t + \sigma(\XXX(t,x))\circ \d W(t),&&t^* > t \ge 0,\\
\d \XXX(t^*+t,x)&= \sigma(\XXX(t^*+t,x)) \circ \d W(t^* + t),&&t \ge 0,
\end{aligned}\right.
\end{equation}
gives unique global solutions $\XXX(t,x)$ 
for each $x$ that are continuous in $t$.
These equations represent stopping the characteristic 
at the time $t^*$, and then starting it again where
$U(X)$ becomes nought. Measurability is not an issue as
$W$ is strong Markov, and $t^*$ was shown to be a stopping time
in Section \ref{sec:asian_options}. Lemma \ref{thm:technical} 
in fact guarantees that $U(X)$ tends continuously
to zero at wave-breaking. 

In effect we have postulated a truncated $\tilde{U}(t,x)$ 
in place of $U$ in \eqref{eq:U_construction}, to wit:
\begin{align*}
\tilde{U}(t,x) = \begin{cases} U(t,x), & t < t^*,\\ 
0, &t \ge t^*,  \end{cases}
\end{align*}
and used the result of Lemma \ref{thm:technical}.

By defining
\begin{equation*}
Q(t,x) = \begin{cases}\pd_x U(t,x), & t < t^*,\\ 
0, & t \ge t^*. \end{cases}
\end{equation*}

It is clear that as in the previous proof, 
$Q(t,x)$ and $Q(t,\XXX(t,x))$ still satisfy
\begin{align*}
\d Q(t,\XXX(t,x))&= -\frac{1}{2} (Q(t,\XXX(t,x)))^2\;\d t - \sigma' Q(t,\XXX(t,x)) \circ \d W
\end{align*}
over $t < t^*$, and that this holds trivially thereafter, as sought.

\end{proof}

\begin{proof}[Proof of Corollary \ref{thm:one_sided_estm}]
This follows directly from \eqref{eq:q_on_char_lin_sigma}, and from \eqref{eq:box_dissp_sol} in Theorem \ref{thm:box_dissp}.

\end{proof}

\begin{remark}[Optimality of higher integrability for the case $\sigma'' = 0$]\label{rem:high_integ}
As we can extend solutions to and past wave-breaking, 
using \eqref{eq:q_on_char_lin_sigma}, 
\eqref{eq:u_simple}, and \eqref{eq:characteristics_difference} 
it is possible to compute $\|q(t)\|_{L^2}$ explicitly 
for the ``box''-type initial condition \eqref{eq:classic_box} 
{\em in the conservative case}, because $q(s)$, as in 
the deterministic case, does not vary over 
the interval $(X(s,0),X(s,1))$:
\begin{align*}
\|q(t)\|_{L^2}^2&= (X(t,0) - X(t,1)) \mathfrak{Q}^2(t,\frac12) \\
&= \mathfrak{Q}(t,\frac12)\,\mathfrak{u}(t,x)\\
&=\frac{Z(t,x)}{\frac1{q_0(x)} + \frac12\int_0^s Z(r,x)\;\d r}
	 \Big(\frac1{q_0(x)} + \frac12\int_0^s Z(r,x)\;\d r\Big)\\
&= Z(t,x) = \exp(-\sigma' W(t)).
\end{align*}

It may be hoped that if the distribution of $t^*$ is 
sufficiently dispersed, then at any deterministic time $t$, 
only a $\mathbb{P}$ measure zero set of paths 
experience wave-breaking and higher integrability 
beyond $L^{3-\ep}(\Omega \times [0,T]\times \R)$ 
proven in Prop. \ref{thm:aprioriestimates} may be achieved. 
This hope proves false, however, as we shall now show:

By the preservation of boxes under the flow of 
the equations in the case $\sigma'' \equiv 0$, 
\begin{align*}
\Ex \|q\|_{L^p([0,T]\times \R)}^p 
&=\Ex \int_0^T|\mathfrak{Q}(t,\frac12)|^p  (X(t,1) - X(t,0))  \;\d t \\
&=  \Ex \int_0^T|\mathfrak{Q}(t,\frac12)|^{p - 1}|\mathfrak{u}(t)| \;\d t .
\end{align*}

With $\mathfrak{Q}(t,x)$ again given by 
\eqref{eq:q_on_char_lin_sigma} and \eqref{eq:u_simple},
 we can simplify the integrand as follows:
\begin{align*}
|\mathfrak{Q}(t,\frac12)|^{p - 1}\mathfrak{u}(t) 
&= \frac{|Z(t)|^{p - 1}}{|\frac1{q_0(x)} 
	+\frac12\int_0^t Z(s,x)\;\d s|^{p - 1}}\Big(\frac1{q_0(x)} + \frac12\int_0^s Z(r,x)\;\d r\Big)\\
&=  \frac{|Z(t)|^{p - 1}}{|\frac1{q_0(x)} 
	+ \frac12\int_0^t Z(s,x)\;\d s|^{p - 2}}.
\end{align*}
Therefore,
\begin{align*}
 \Ex \big(|\mathfrak{Q}&(t,\frac12)|^{p - 1}|\mathfrak{u}(t)| \big)\\
&=  \int_{\xi \in [0,\infty)} \int_{r \in \R}
	\frac{|\exp(-\sigma' r) |^{p - 1}}{|\frac1{V_0} + \chi|^{p - 2}} 
		\mathbb{P}\bigg(\bigg\{ W(t) \in \d r ,
			\; \frac12\int_0^t \exp(-\sigma' W(s))\;\d s  \in \d \chi \bigg\}\bigg).
\end{align*}

This law is almost given in \cite[Theorem 4.1]{MR2203675}
 (see also \cite{MR1174378}), where using the notation 
 established in Section \ref{sec:asian_options}, 
 it was shown that for
 \begin{equation*}
A^{(\mu)}(t) = \int_0^t \exp( 2 \mu s + 2 W(s) )\;\d s,
\end{equation*}
one has
\begin{align*}
\mathbb{P}(\{A^{(\mu)}(t) \in \d \chi,\, W(t) \in \d r \}) 
	= \frac{\d \chi}{\chi} e^{\mu r - \mu^2 t/2} 
			\exp\Big(-\frac{1 + e^{2r}}{2\chi}\Big) \vartheta\big(\frac{e^r}{\chi},t\big) \;\d r,
\end{align*}
where again,
\begin{equation*}
\vartheta(y,t) = \frac{y}{\sqrt{2 \pi^3 t}}
	 e^{\pi^2/(2t)}\int_0^\infty e^{-\xi^2/(2t)}
	 	e^{-y \cosh(\xi)}\sinh(\xi) \sin\Big(\frac{\pi \xi}{t}\Big)\;\d \xi.
\end{equation*}

It is possible simply to scale time in both 
$A^{(\mu)}(t)$ and $W(t)$ simultaneously 
as in \eqref{eq:laws_scaling}:
\begin{align*}
\mathbb{P}(\{A^{(0)}(t)& \in \d \chi,\, W(t) \in \d r \}) \\
&=  \mathbb{P}\bigg(\bigg\{\frac{(\sigma')^2}{4}
	\int_0^{4t/(\sigma')^2} \exp(-\sigma'\tilde{W}(\tau))\;\d  \tau
			 \in \d \chi,\, W(t) \in \d r \bigg\}\bigg),
\end{align*}
so that
\begin{align*}
\mathbb{P}(\{A^{(0)}(t) & \in  \d \chi,\, W(t) \in \d r \}) \\
&=  \mathbb{P}\bigg(\bigg\{\frac{(\sigma')^2}{4}
	\int_0^{4t/(\sigma')^2} \exp(-\sigma'\tilde{W}(\tau))\;\d  \tau 
		\in \d \chi,\, -\frac{\sigma'}{2} \tilde{W}\big(4t/(\sigma')^2\big) 
				\in \d r \bigg\}\bigg),
\end{align*}
and 
\begin{align*}
\mathbb{P}\bigg(\bigg\{\int_0^{t} \exp(-\sigma'\tilde{W}(s))\;\d  s &
	\in \d \chi,\, \tilde{W}(t) \in \d r \bigg\}\bigg)\\ 
	&=  \frac{-\sigma'}{2} \frac{\d \chi}{\chi} 
		\exp\bigg(-\frac{2(1 + e^{-\sigma' r})}{(\sigma')^2\chi}\bigg) 
			\vartheta\bigg(\frac{4e^{-\sigma' r/2}}{(\sigma')^2\chi}, \frac{(\sigma')^2t}{4}\bigg) \;\d r\notag.
\end{align*}

Finally integrating in time we find
\begin{align*}
\int_0^T \Ex \big(|\mathfrak{Q}(t,\frac12)|^{p - 1}|\mathfrak{u}(t)| \big)\;\d t
&= \int_0^T\int_\R\int_0^\infty \frac{\exp(-(p - 1)\sigma' r)}{|1/V_0 + \chi|^{p - 2}} \\
&\quad\times\frac{-\sigma'}{2}  \exp\Big(-\frac{2(1 + e^{-\sigma' r})}{(\sigma')^2\chi}\Big)
		 \vartheta\big(\frac{4e^{-\sigma' r/2}}{(\sigma')^2\chi}, \frac{(\sigma')^2t}{4}\big)
		 		 \;\frac{\d \chi}{\chi}\,\d r\,\d t.
\end{align*}

As can be seen, there is no bound for the blow-up 
of this quantity in the small ball $\chi \in B_\ep(-1/V_0)$ 
except if $p - 2 < 1$. However, it is still conceivable 
that there is higher integrability if $\sigma'' \not= 0$). 
Under the principle that ``boxes'' are preserved 
under the flow, the spatial dimension is essentially 
lost in the triple integral (in space, time, and 
probability), but freeing up the spatial variable 
from this constraint gives us, effectively, an 
extra dimension to integrate, opening the 
possibility that the integral remains bounded 
at a higher exponent than $3 - \ep$. This can be 
understood as an effect of regularisation-by-{\em multiplicative} 
noise if indeed it holds \cite{MR2593276}.
\end{remark}

\subsection{Well-posedness for general data} \label{sec:pbt}
Using the same procedure outlined after 
\eqref{eq:ufrak_lin_sigma}, we now extend our 
analysis to general data. We work directly with 
$L^2(\R) \cap L^1(\R)$-valued random variables. 
The following does not generalise easily beyond 
the linear $\sigma$ case, again because in the $\sigma''=0$ case, 
there is no dependence of $\mathfrak{Q}(t,x)$ on 
$x$ through characteristics $X(t,x)$. In particular, 
as mentioned in Remark \ref{rem:meaning_of_q(X)}, 
$\mathfrak{Q}(t,x)$ is simply defined up to 
wave-breaking via \eqref{eq:q_on_char_lin_sigma}:
\begin{align}
\mathfrak{Q}(t,x) 
	= \frac{e^{-\sigma W(t)}}{\frac1{q_0(x)} + \frac12\int_0^t e^{-\sigma W(s)}\;\d s }.
\end{align}
In this subsection, all solutions refer exclusively to conservative 
or dissipative solutions-along-characteristics.

\begin{theorem}[Conservative Solutions: General Initial Data]\label{thm:gen_cons}
Suppose $\sigma'' = 0$ and $q_0 \in L^2(\R) \cap L^1(\R)$. 
There exists a $U \in C([0,\infty)\times \mathbb{R})$, 
absolutely continuous in $x$, $\mathbb{P}$-almost surely, 
such that for each $x \in \R$, the following SDE is 
globally well-posed:
\begin{align*}
\XXX(t,x) = x +  \int_0^t U(s,\XXX(s,x))\;\d s 
	+ \int_0^t \sigma(\XXX(s,x)) \circ \;\d W(s).
\end{align*}

For $Q(t,x) = \pd_x U(t,x)$, the process 
$Q(t,\XXX(t,x))$ agrees $\mathbb{P}$-almost surely 
with $\mathfrak{Q}(t,x)$ as given by \eqref{eq:q_on_char_lin_sigma} 
up to $t = t^*$ and can be represented globally as
\begin{equation*}
Q(t,\XXX(t,x)) 
= \frac{\exp(-\sigma' W(t))}{\frac1{q_0(x)} + \frac12\int_0^t \exp(-\sigma' W(s))\;\d s}.
\end{equation*}

In particular, $\tilde{Q}(t)= Q(t,\XXX(t,x))$ 
satisfies \eqref{eq:qX_sde2}: 
\begin{equation*}
\d \tilde{Q}(t)= - \frac{1}{2} \tilde{Q}^2(t)\;\d t 
	+ \sigma' \tilde{Q}(t)\circ \d W.
\end{equation*}

Furthermore, $Q \in  L^2(\Omega \times [0,T] \times \R)$ 
and in $ C([0,T];H^{-1}_\loc(\R))$, $\mathbb{P}$-almost surely, 
and the energy can be expressed $\mathbb{P}$-almost surely as
\begin{align}\label{eq:energy_consv}
\int_\R Q^2(t,x)\;\d x =  \int_\R Q^2(0,x) \exp(-\sigma' W(t))\;\d x.
\end{align}

\end{theorem}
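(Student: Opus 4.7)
The plan is to mirror the construction of Theorem~\ref{thm:box_cons}, but now with $\mathfrak{Q}(t,x)$ depending non-trivially on $x$ through $q_0(x)$ rather than being constant on a box. The key enabler, as in Remark~\ref{rem:meaning_of_q(X)}, is that when $\sigma''=0$ the expression \eqref{eq:q_on_char_lin_sigma} defines $\mathfrak{Q}(t,x)$ directly as a family of processes indexed by Lebesgue points $x$ of $q_0$, without reference to any pre-existing characteristic. I would simultaneously define $\mathfrak{u}(t,x):=q_0(x)\bigl(1+\tfrac{q_0(x)}{2}\int_0^t e^{-\sigma' W(s)}\,\d s\bigr)$ as in \eqref{eq:u_simple_lin_sigma} and the candidate Jacobian $\phi(t,x):=\exp\bigl(\int_0^t \mathfrak{Q}(s,x)\,\d s+\sigma' W(t)\bigr)$. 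A direct calculation yields $\phi(t,x)=\bigl(1+\tfrac{q_0(x)}{2}\int_0^t e^{-\sigma' W}\,\d s\bigr)^2 e^{\sigma' W(t)}\ge 0$, which vanishes only at $t=t^*_x$, together with the algebraic identities $\mathfrak{u}=\mathfrak{Q}\,\phi$ and $\mathfrak{Q}^2\phi=q_0^2(x)\,e^{-\sigma' W(t)}$. In the conservative case these formulas are adopted past $t^*_x$ by continuation.

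Next I would construct $\XXX$ and $U$ using these explicit processes. Since $q_0\in L^1\cap L^2$ makes both $\mathfrak{u}(t,\cdot)$ and $\phi(t,\cdot)-e^{\sigma' W(t)}$ integrable on $\R$, I would anchor at $x\to-\infty$ (consistent with the boundary condition $u(r)\to 0$ from Definition~\ref{def:weak_sol}) and put
$$\XXX(t,x)-\XXX(t,x'):=\int_{x'}^x \phi(t,y)\,\d y,\qquad U(t,\XXX(t,x)):=u_0(x)+\frac12\biggl(\int_0^t e^{-\sigma' W(s)}\,\d s\biggr)\int_{-\infty}^x q_0^2(y)\,\d y,$$
the latter being the primitive of $\mathfrak{u}$ in $x$. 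Extending $U(t,\cdot)$ by constants off the range of $\XXX(t,\cdot)$ gives an $x$-absolutely-continuous $U\in C(\R)$, $\Px$-a.s. The characteristic SDE \eqref{eq:characteristics} is then verified by differentiating $U(t,\XXX(t,x))$ in $t$ using the identities satisfied pointwise in $x$ by $\mathfrak{Q}$ and $\mathfrak{u}$ (cf.~Lemma~\ref{thm:technical}). Setting $Q:=\pd_x U$ and differentiating $U(t,\XXX(t,x))$ in $x$ forces $Q(t,\XXX(t,x))\,\phi(t,x)=\mathfrak{u}(t,x)$, hence $Q(t,\XXX(t,x))=\mathfrak{Q}(t,x)$ wherever $\phi>0$, and by the conservative extension everywhere. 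Then $\tilde Q(t):=Q(t,\XXX(t,x))$ satisfies \eqref{eq:qX_sde2} because $\mathfrak{Q}$ does.

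The regularity and energy statements then follow by change of variables $z=\XXX(t,y)$, $\d z=\phi(t,y)\,\d y$:
$$\int_\R Q^2(t,z)\,\d z=\int_\R \mathfrak{Q}^2(t,y)\,\phi(t,y)\,\d y=e^{-\sigma' W(t)}\int_\R q_0^2(y)\,\d y,$$
which is \eqref{eq:energy_consv}; the $L^2(\Omega\times[0,T]\times\R)$-bound comes by taking expectations and using $\Ex e^{-\sigma' W(t)}=e^{(\sigma')^2 t/2}$, and $H^{-1}_\loc$-continuity in $t$ comes from this $L^2$-bound together with the weak form of \eqref{eq:qX_sde2} pushed forward through $\XXX$. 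The main obstacle I anticipate is justifying the change of variables and the identification $Q\circ\XXX=\mathfrak{Q}$ across the exceptional set $\{(t,x):\phi(t,x)=0\}=\{t=t^*_x\}$, which by Proposition~\ref{thm:asianop_distb} and Fubini is a $\Px\otimes\d t\otimes\d x$-null set: one must argue that the pointwise formulas extend continuously in the relevant norms across it. Ancillary to this are joint measurability of $(t,x,\omega)\mapsto\XXX(t,x),U(t,x)$ and well-posedness of the anchor characteristic, both handled by pathwise uniform boundedness of $\mathfrak{u}$ and the closed-form expressions.
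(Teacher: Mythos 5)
Your proposal is correct and follows essentially the same route as the paper: the paper likewise defines $\Psi(t,y)=\int_{-\infty}^y\mathfrak{u}(t,x)\,\d x$ (your primitive of $\mathfrak{u}$), obtains the characteristics with explicit non-negative Jacobian $\pd\XXX/\pd x=\exp\bigl(\int_0^t\mathfrak{Q}(s,x)\,\d s+\sigma'W(t)\bigr)$, recovers $U$ by inverting $\XXX$ off the wave-breaking set, identifies $Q\circ\XXX=\mathfrak{Q}$ by the chain rule, and gets \eqref{eq:energy_consv} and the $H^{-1}_\loc$-continuity by the same change of variables and the $L^2(B)$-continuity of $U(t)$. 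The only cosmetic difference is that you define $\XXX$ through its spatial increments $\int\phi(t,y)\,\d y$ anchored at one characteristic, whereas the paper defines each $\XXX(t,x)$ directly as the solution of an SDE with the prescribed drift $\Psi(s,x)$ and then reads off the same Jacobian.
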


Similarly, for the dissipative solutions-along-characteristics, we have:

\begin{theorem}[Dissipative Solutions: General Initial Data]\label{thm:gen_dissp}
Suppose $\sigma'' = 0$ and $q_0 \in L^2(\R) \cap L^1(\R)$. 
There exists a $U \in C([0,\infty)\times \mathbb{R})$, 
absolutely continuous in $x$, $\mathbb{P}$-almost surely, 
such that for each $x \in \R$, the SDE
\begin{align*}
\XXX(t,x) = x +  \int_0^t U(s,\XXX(s,x))\;\d s
	 + \int_0^t \sigma(\XXX(s,x)) \circ \;\d W(s)
\end{align*}
 is globally well-posed.

For $Q(t,x) = \pd_x U(t,x)$, the process $Q(t,\XXX(t,x))$ 
agrees $\mathbb{P}$-almost surely with $\mathfrak{Q}(t,x)$ 
up to $t = t^*_x$ and can be represented globally as
\begin{align}
Q(t,\XXX(t,x)) = \begin{cases}\frac{\exp(-\sigma' W(t))}{\frac1{q_0(x)} 
	+\frac12 \int_0^t \exp(-\sigma' W(s))\;\d s}, & t < t^*_x,\\ 
	0,&t > t^*_x.\end{cases}
\end{align}
Here $t^*_x$ is given by \eqref{eq:t_break}.

In particular, $\tilde{Q}(t)= Q(t,\XXX(t,x))$ 
satisfies \eqref{eq:qX_sde2}:
\begin{equation*}
\d \tilde{Q}(t)= - \frac{1}{2} \tilde{Q}^2(t)\;\d t + \sigma' \tilde{Q}(t)\circ \d W.
\end{equation*}

Furthermore, $Q \in L^2(\Omega \times [0,T] \times \R)$ 
and in $C([0,T];H^{-1}_\loc(\R))$, $\mathbb{P}$-almost surely, 
and the energy can be expressed $\mathbb{P}$-almost surely as
\begin{align}\label{eq:energy_dissp}
\int_\R Q^2(t,x)\;\d x =  \int_\R Q^2(0,x)
	 \exp(-\sigma' W(t))\mathds{1}_{\{t \le t^*_x\}}\;\d x.
\end{align}
\end{theorem}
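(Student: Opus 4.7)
The plan is to parallel the proof of Theorem \ref{thm:gen_cons}, with two modifications reflecting dissipative continuation: for each Lebesgue point $x$ of $q_0$, the value $Q(t,\XXX(t,x))$ is truncated to zero at the wave-breaking time $t^*_x$ given by \eqref{eq:t_break}; past $t^*_x$ the characteristic $\XXX(t,x)$ evolves by the pure-noise equation from \eqref{eq:dissipative_characteristics}. The overall strategy is to approximate $q_0$ by step functions, invoke Theorem \ref{thm:box_dissp} on each box, paste using disjoint-support preservation (Lemma \ref{thm:qX_behaviours}(ii)), and then take limits.

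First, approximate $q_0 \in L^1(\R)\cap L^2(\R)$ by $q_0^n(x) = \sum_k V_k^n \mathds{1}_{[k/n,(k+1)/n)}(x)$ with convergence in $L^1(\R)\cap L^2(\R)$. For each $n$, Theorem \ref{thm:box_dissp} applied on each interval $[k/n,(k+1)/n)$ yields a box-level dissipative solution-along-characteristics with its own wave-breaking time $t^{*,n}_k$ satisfying $-\tfrac{1}{2} V_k^n \int_0^{t^{*,n}_k} e^{-\sigma' W(s)}\,\d s = 1$. By Lemma \ref{thm:qX_behaviours}(ii) these box-level solutions have pairwise disjoint spatial supports, so setting $Q^n = \sum_k Q^{n,k}$ and $U^n(t,x) = \int_{-\infty}^x Q^n(t,y)\,\d y$ yields a consistent pair $(U^n, Q^n)$, with characteristics $\XXX^n(t,x)$ determined by the endpoint characteristics of each box before its wave-breaking and by pure-noise evolution thereafter.

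Next I would pass to $n \to \infty$. Since the formula \eqref{eq:q_on_char_lin_sigma} depends on $x$ only through $q_0^n(x)$, the family $Q^n(t, \XXX^n(t,x))$ converges pointwise at each Lebesgue point of $q_0$ to the formula in the theorem statement, with the indicator $\mathds{1}_{\{t \le t^*_x\}}$ emerging from the truncation. On each sub-box, $\pd_x \XXX^n$ inherits the bound \eqref{eq:X1-X0} (with $V_k^n$ in the role of $V_0$), giving local uniform convergence $\XXX^n \to \XXX$ in $(t,x)$, $\mathbb{P}$-almost surely; the limit $\XXX$ satisfies the postulated SDE by stability of Stratonovich integrals, with $U(t,\XXX(t,x))$ vanishing past $t^*_x$ by Lemma \ref{thm:technical} so that the continuation by pure noise glues continuously. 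Joint measurability of $(x,\omega) \mapsto t^*_x(\omega)$ is direct from \eqref{eq:t_break}, and Proposition \ref{thm:asianop_distb} confirms that $t^*_x$ is a stopping time for each fixed $x$.

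The remaining properties then follow from the construction. The SDE \eqref{eq:qX_sde2} holds strongly along each characteristic: before $t^*_x$ by the Verhulst calculation of Lemma \ref{thm:qX_behaviours}, trivially after $t^*_x$ since $Q \equiv 0$ there. For the energy identity \eqref{eq:energy_dissp}, a change of variables $y = \XXX(t,x)$ on the set $\{t \le t^*_x\}$ combined with the generalisation of \eqref{eq:X1-X0} to arbitrary $x$ yields $Q^2(t,\XXX(t,x))\pd_x\XXX(t,x) = q_0^2(x) e^{-\sigma' W(t)}$, and integrating over $x$ gives \eqref{eq:energy_dissp}; the $C([0,T]; H^{-1}_{\loc}(\R))$-regularity then follows from the SDE for $Q$ and the uniform $L^2$-bound. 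The chief obstacle is the geometry of wave-breaking: the set $\{(t,x) : t = t^*_x\}$ is a random subset of $(0,T)\times \R$ whose structure depends intricately on $q_0$ (in particular on the negative part, since $t^*_x = \infty$ wherever $q_0 \ge 0$), and one must verify that characteristics approaching this set from different sides do not generate discontinuities in $U$. This is exactly what Lemma \ref{thm:technical} handles by showing that $\mathfrak{u}$ decays continuously to zero at each wave-breaking time, so that $U$ remains continuous across the dissipative junction and the post-wave-breaking evolution by $\sigma(\XXX)\circ \d W$ can be consistently appended.
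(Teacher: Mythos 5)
Your route is genuinely different from the paper's. The paper proves this theorem \emph{directly}: it writes down $U(t,\XXX(t,x)) = \int_{-\infty}^x q_0(y)\big(1+\tfrac{q_0(y)}{2}\int_0^t e^{-\sigma'W(s)}\,\d s\big)\mathds{1}_{\{t<t^*_y\}}\,\d y$ explicitly, observes that in the $\sigma''=0$ case this expression is a known, bounded process that does \emph{not} depend on $\XXX$ at all (no circularity), so the SDE for $\XXX$ is globally well-posed by the standard theorem, and then verifies the SDE for $Q$, the energy identity via the change of variables $Q^2(t,\XXX(t,y))\,\pd_y\XXX = q_0^2(y)e^{-\sigma'W(t)}\mathds{1}_{\{t\le t^*_y\}}$, and the $H^{-1}_\loc$-continuity via Lemma \ref{thm:technical} --- exactly the identities you arrive at in your final paragraph. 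Your box-approximation-and-limit scheme is the route the paper only sketches in the remark on discrete approximations and explicitly declines to carry out, noting that in general it demands tightness/Skorohod-type machinery and identification of martingale limits. What your approach buys is a construction that would generalise more naturally beyond $\sigma''=0$; what it costs is that the entire burden shifts to the limit passage $n\to\infty$, which is precisely the step you assert rather than prove: "local uniform convergence $\XXX^n\to\XXX$ in $(t,x)$" is not available uniformly in $x$ (since $\pd_x\XXX^n = e^{\sigma'W(t)}\big(1+\tfrac{V^n_k}{2}\int_0^t e^{-\sigma'W}\,\d s\big)^2$ is not uniformly bounded when $q_0\notin L^\infty$), and "stability of Stratonovich integrals" needs an argument. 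In the linear-$\sigma$ case your limit passage can be salvaged pathwise for each fixed $x$ --- the drift $U^n(s,\XXX^n(s,x))$ is an explicit integral of $q_0^n$ converging by dominated convergence (using the maximal function of $q_0$ as dominant), and the SDE for $\XXX$ is then linear in $\XXX$ with known forcing --- but once you notice that the limiting drift is explicit and independent of $\XXX$, the approximation scaffolding becomes unnecessary and you are back at the paper's direct proof. The substantive verifications you list (Verhulst SDE before $t^*_x$ and triviality after, the energy computation, and the use of Lemma \ref{thm:technical} to glue $U$ continuously across the wave-breaking set) are all correct and match the paper's.
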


This generalizes the main theorem in 
\cite[Thm. 4.1]{MR2796054} to the stochastic setting.

\begin{remark}
The inclusions preceding \eqref{eq:energy_consv} 
and \eqref{eq:energy_dissp} are  
implied by the respective equations. This was already shown 
in Remarks \ref{rem:energy_balance_cons} and 
\ref{rem:energy_balance_dissp}, respectively.
\end{remark}

\subsection{Conservative solutions} 

In the case $\sigma'' = 0$, $\mathfrak{Q}(t,x)$ 
in \eqref{eq:q_on_char_lin_sigma} is independent of $X(t,x)$, 
and only depends on $x$ via $q_0(x)$. It becomes possible, 
if $q_0 \in L^2(\R) \cap L^\infty(\R)$, to define 
$U(t,\XXX(t,x))$ as the spatial integral of $u$. However, 
in order to avoid cyclic dependencies when $U$ is 
used to define $\XXX$ via an SDE analogous to 
\eqref{eq:characteristics}, we define first an 
auxiliary function which should be thought of as $U(t,\XXX(t,y))$:
\begin{align}\label{eq:Psi_auxU}
\Psi(t,y) = \int_{-\infty}^y \mathfrak{u}(t,x) \;\d x.
\end{align}
Recall that $\mathfrak{u}$ is explicitly given in 
\eqref{eq:u_simple_lin_sigma} and depends on 
$x$ only via $q_0$. In the conservative construction 
we extend this definition by the same formula to 
$t > t^*_x$ as we did in the specific cases of 
``box''-type data.

Define the characteristics via the equation:
\begin{align}\label{eq:tildeX_L2}
\XXX(t,x) = x + \int_0^t\Psi(s,x)\;\d s 
		+ \int_0^t \sigma(\XXX(s,x)) \circ \d W(s),
\end{align}
which is straightforward as $\sigma$ is linear 
and $\Psi(t,y)$ is a well-defined process, being dependent
only on $\mathfrak{u}$, which in turn is defined
explicitly in \eqref{eq:u_simple_lin_sigma},
as, analogous to \eqref{eq:characteristics_difference}, 
the derivative
\begin{align}\label{eq:dX_dx}
 \frac{\pd \XXX(t,x)}{\pd x} 
 	= \exp\Big(\int_0^t \mathfrak{Q}(s,x)\;\d s + \sigma' W(t)\Big)
 \end{align}
is well-defined and non-negative, the right-hand side again being 
dependent on $x$ only through $q_0$. 

This allows us to define
\begin{equation*}
U( t,x) := \Psi(t,y), \qquad \XXX(t,y;\omega) = x,
\end{equation*}
as long as $t \not= t^*_y $ (cf.~\eqref{eq:wave_breaking_time}).
 Such a $y$ exists because $\pd \XXX/\pd x$ is 
 $\mathbb{P}$-almost surely bounded, and strictly positive. 
 The function $U$ is well-defined even though $y$ as a random variable 
 may not be unique because $U$ only depends on $y$ via 
 $\XXX(t,y)$. The variable $y$ is therefore a 
 device for shifting stochasticity back-and-forth 
 between $x$ and $X(t,y)$, and depends on the 
 Jacobian $\pd X(y)/\pd y$ being non-singular. 
 To expand on this point we record a general version 
 of Lemma \ref{thm:stoch_dafermos_box}:
 
\begin{lemma}[``Stochastic Flow of Diffeomorphism'' before Wave-breaking for General Data] \label{thm:general_stoch_flow_diffeo}
 Given $t$ and $x$ deterministic, there is a random 
 variable $y\colon \Omega \to \R$ such that $\XXX(t,y) = x$, 
 $\mathbb{P}$-almost surely. If there are two such 
 random variables $y_1$ and $y_2$ that satisfy this equation, 
 then $y_1 - y_2$ is supported on the set 
 $\{\omega: t^*_{y_1} = t\}\cap\{\omega:  t^*_{y_2} = t\}$ in the sense 
 that on the full $\mathbb{P}$-measure of the complement, 
 the difference is nought.
\end{lemma}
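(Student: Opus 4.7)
The plan is to exploit the monotonicity of $\XXX(t,\cdot;\omega)$ encoded in \eqref{eq:dX_dx}, together with its continuity and surjectivity, so that existence of $y$ reduces to a measurable selection while uniqueness reduces to a derivative-vanishing argument.

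For existence, observe that $x \mapsto \XXX(t,x;\omega)$ is non-decreasing and continuous by \eqref{eq:dX_dx}. Because $\sigma$ is linear and $\Psi(t,\cdot)$ is bounded in $x$ (since $\mathfrak{u}(t,\cdot)\in L^1(\R)$ whenever $q_0 \in L^1(\R) \cap L^2(\R)$, by \eqref{eq:u_simple_lin_sigma}), integrating the linear Stratonovich SDE \eqref{eq:tildeX_L2} pathwise yields
\begin{equation*}
\XXX(t,x;\omega) = x\,e^{\sigma' W(t;\omega)} + R(t,x;\omega),
\end{equation*}
with $R$ bounded in $x$ for each fixed $(t,\omega)$. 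Thus $\XXX(t,\cdot;\omega)\colon \R \to \R$ is surjective, and setting
\begin{equation*}
y(\omega):=\inf\{z \in \R : \XXX(t,z;\omega) \ge x\}
\end{equation*}
defines an $\mathscr{F}_t$-measurable random variable (joint measurability of $\XXX$ in $(z,\omega)$ comes from adaptedness and continuity in $z$), with $\XXX(t,y(\omega);\omega)=x$ by continuity in $z$.

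For uniqueness, let $y_1,y_2$ both satisfy $\XXX(t,y_i;\omega)=x$ and work on the event $A=\{y_1<y_2\}$, the opposite case being symmetric. By monotonicity and continuity of $\XXX(t,\cdot;\omega)$, necessarily $\XXX(t,z;\omega)=x$ for every $z \in [y_1(\omega),y_2(\omega)]$, so $\pd_z\XXX(t,z;\omega)=0$ for Lebesgue-a.e.\ such $z$. By \eqref{eq:dX_dx} combined with the explicit expression \eqref{eq:q_on_char_lin_sigma} for $\mathfrak{Q}$ and the definition \eqref{eq:t_break} of $t^*_z$, this vanishing forces $t=t^*_z(\omega)$ for almost every $z \in [y_1,y_2]$. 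Since $t^*_z$ depends on $z$ only through $q_0(z)$, the equality $t^*_z = t$ pins $q_0(z)$ to a single $\omega$-dependent value on a full-measure subset of $[y_1,y_2]$; at the Lebesgue points $y_1,y_2$ of $q_0$ the same value of $q_0$ is then realised, yielding $t^*_{y_1}(\omega)=t^*_{y_2}(\omega)=t$. Hence $A \subseteq \{t^*_{y_1}=t\}\cap\{t^*_{y_2}=t\}$ up to a $\mathbb{P}$-null set, which is the stated support condition.

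The principal subtlety will be verifying that $\pd_z\XXX(t,z;\omega)=0$ characterises $t=t^*_z$ \emph{exactly}, and not merely $t\ge t^*_z$, so that the conservative continuation of characteristics past wave-breaking is non-degenerate. This is resolved by inspecting the pathwise expression: $\frac{1}{q_0(z)}+\frac{1}{2}\int_0^t e^{-\sigma' W(s)}\,\d s$ is strictly monotone in $t$ and vanishes at a unique time, so characteristics rediverge immediately past wave-breaking in the conservative construction. The handling of non-Lebesgue points of $q_0$ amounts to a Lebesgue-null correction in the initial variable, and is absorbed into the stated $\mathbb{P}$-null exceptional set together with the measurability of $t^*_z$ already established in Section~\ref{sec:asian_options}.
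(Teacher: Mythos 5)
Your proof is correct and is essentially the argument the paper intends: the paper does not write out a proof of this lemma but relies on exactly the facts you formalize, namely that \eqref{eq:dX_dx} makes $x\mapsto \XXX(t,x)$ continuous, non-decreasing and (by the linear growth of the Stratonovich SDE) surjective, and that the derivative $\exp\bigl(\int_0^t \mathfrak{Q}(s,z)\,\d s+\sigma'W(t)\bigr)=e^{\sigma'W(t)}\bigl(1+\tfrac{q_0(z)}{2}\int_0^t e^{-\sigma'W}\bigr)^2$ vanishes precisely at $t=t^*_z$, which is the only mechanism for non-injectivity. Your measurable-selection construction of $y$ and the reduction of $t^*_z=t$ to $q_0(z)=-2/\int_0^te^{-\sigma'W}$ at Lebesgue points are sound and consistent with the paper's conventions.
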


We emphasize here the hierarchy of dependencies, 
being that $U$ depends on $\XXX$, 
which depends on $\Psi$ in the above. 
The function $\Psi$ in turn depends on $\mathfrak{u}$, 
which in the $\sigma'' = 0$ case, is given explicitly 
by formula \eqref{eq:u_simple_lin_sigma}, 
derived using the similarly explicit formula \eqref{eq:q_on_char_lin_sigma}
for the process $\mathfrak{Q}(t,x)$.

The definition of $U$ ensures that
\begin{equation*}
U(t,\XXX(t,x)) = \Psi(t,x)
\end{equation*}
and consequently
\begin{equation*}
\XXX(t,x) = x + \int_0^t U(s,\XXX(s,x))\;\d s 
		+ \int_0^t \sigma(\XXX(s,x)) \circ \d W(s).
\end{equation*}

It remains for us to check that,  $\mathbb{P}$-almost surely,
\begin{itemize}

\item[(i)]  $Q(t,\XXX(t,x)) =  (\pd_x U)(t,\XXX(t,x))$ satisfies \eqref{eq:qX_sde2}, and

\item[(ii)]  $Q \in C([0,T]; H^{-1}_\loc(\R))$.
\end{itemize}

By continuity in $H^{-1}_\loc$ we mean that for 
every pre-compact $B \in \R$,  $\|Q(t)\|_{H^{-1}(B)}$ 
is continuous. In turn, the space $H^{-1}$ is defined 
as the dual space of of compactly supported $H^1$ functions. 
It is norm-equivalent to $L^2$ of the 
anti-derivative on compact sets.

\begin{proof}[Proof of Theorem  \ref{thm:gen_cons}]
By construction, (i) is already satisfied. 
We can take the spatial derivative easily enough:
\begin{align*}
(\pd_x U)(t,x) &= \pd_x \int_0^y \mathfrak{Q}(t,z) \frac{\pd \XXX}{\pd z} \;\d z\\
&= \mathfrak{Q}(t,y)\frac{\pd \XXX}{\pd y} \frac{\pd y}{\pd x}\\
&=  \mathfrak{Q}(t,y).
\end{align*}
Putting $X(t,x)$ in the place of $x$, 
we can put $x$ in the place of $y$, giving us:
\begin{align*}
(\pd_x U)(t,\XXX(t,x)) = \mathfrak{Q}(t,x).
\end{align*}

To prove \eqref{eq:energy_consv} we again invoke 
Lemma \ref{thm:technical} (in particular, \eqref{eq:u_simple}) and \eqref{eq:dX_dx}:
\begin{align*}
\int |Q(t,x)|^2\;\d x&= \int |Q(t,\XXX(t,y))|^2 \frac{\pd \XXX(y)}{\pd y}\;\d y \\
&= \int \frac{Z(t,y)}{\frac1{Q(0,y)} +\frac12 \int_0^t Z(s,y)\;\d s} Q(t,\XXX(t,y))\\
	&\qquad\qquad \qquad \times \exp\bigg(\int_0^t Q(s,\XXX(s,y))\;\d s 
			+ \sigma' W(t)\bigg)\;\d y\\
&= \int Q(0,y) ^2 Z(t,y)\;\d y\\
&= \int Q(0,y) ^2 \exp(-\sigma' W(t))\;\d y,
\end{align*}
where again we have used the notation 
$Z(t,y) = \exp\big(\int_0^t \sigma' \;d W\big) = \exp(\sigma' W(t))$.

Finally to see (ii), we consider the almost sure 
continuity of $\|U(t)\|_{L^2(B)}^2$ over a 
pre-compact set $B \subseteq \R$:
\begin{align}\label{eq:H-1_UL2}
\int_B |U(t,x)|^2\;\d x &= \int_B \bigg|\int_{-\infty}^x \mathfrak{u}(t,y)\;\d y \bigg|^2\;\d x.
\end{align}
As was shown in \eqref{eq:u_simple_lin_sigma}, 
\eqref{eq:u_simple} in Lemma \ref{thm:technical}, 
\begin{align}\label{eq:fraku_simple2}
\mathfrak{u}(t,y) 
= q_0(y) \Big(1 + \frac{q_0(y)}{2}
		 \int_0^t \exp(-\sigma' W(s))\;\d s\Big),
\end{align}
which is path-by-path continuous in time for each 
fixed $y$ that is a Lebesgue point of $u$. 
The boundedness of the integral on the right in 
\eqref{eq:H-1_UL2} is then a result of the 
assumption $q_0 \in L^2(\R)\cap L^1(\R)$. 
Therefore, 
\begin{align}
\|U(t)& -U(s)\|_{L^2(B)}^2  \notag  \\
&= \int_B \bigg|\int_{-\infty}^x \mathfrak{u}(t,y)\;\d y
 - \int_{-\infty}^x \mathfrak{u}(s,y)\;\d y \bigg|^2\;\d x\notag\\
&= \int_B \bigg| \int_{-\infty}^x \frac{q^2_0(y)}{2} \bigg|^2 \;\d x \times \bigg( \int_s^t \exp(-\sigma'W(\tau))\;\d \tau\bigg)^2.
		\label{eq:QH-1_continuity}
\end{align}
The same boundedness of integral of $\mathfrak{u}$, 
and continuity of $\mathfrak{u}(\dott, y)$ in time 
means that the limit as $s \to t$ is almost surely $0$. 
This shows the continuity of $\|Q(t)\|_{H^{-1}_\loc}$ in time. 

\end{proof}

\begin{remark}[Temporal continuity of $\|Q(t)\|^2_{{H}^{-1}(B)}$]
\label{rem:temp_cont_QH-1}
From second factor in the integral with respect to $x$ of the 
foregoing calculation, \eqref{eq:QH-1_continuity}, 
upon comparison with \eqref{eq:fraku_simple2}, 
it can be seen that in fact $\|Q(t)\|^2_{{H}^{-1}(B)}$ 
is $\mathbb{P}$-almost surely in $C^{1/2 - 0}$, 
and not simply continuous. Even though on taking the square root 
 $\|Q(t)\|_{H^{-1}(B)}$ possesses strictly higher regularity-in-time
than simply $\mathbb{P}$-almost sure inclusion in $C(\R)$,
this still contrasts with the local Lipschitz continuity of $\|{q}(t)\|_{H^{-1}_\loc}$ 
that deterministic solutions ${q}$ possess (cf.~\eqref{eq:dtm_H-1_continuity}).
\end{remark}

\subsection{Dissipative solutions} We proceed directly to 
the proof of Theorem \ref{thm:gen_dissp}.
\begin{proof}[Proof of Theorem  \ref{thm:gen_dissp}]
By dissipative we mean solutions for which
\begin{equation*}
Q(t,\XXX(t,x)) = \begin{cases} \exp(-\sigma' W(t))\big(\frac1{q_0(x)} + \frac12\int_0^t \exp(-\sigma'W(s))\;\d s\big)^{-1}, & t < t^*_x,\\ 
0, & t > t^*_x.\end{cases} 
\end{equation*}
Again, as $\sigma'' = 0$, the right-hand side 
only depends on $x$ via $q_0$.

Defining $U(t,\XXX(t,x))$ as before, we can write
\begin{align}
U(t,\XXX(t,x)) &= \int_{-\infty}^x Q(t,\XXX(t,y))\frac{\pd \XXX}{\pd x}\;\d y \notag\\
&= \int_{-\infty}^x q_0(y) \Big(1 +\frac{q_0(y)}{2}
	\int_0^t \exp(-\sigma' W(s))\;\d s\Big)\mathds{1}_{\{t < t^*_y\}}\;\d y.
\end{align}

Therefore, again, there is no dependence of $U(t,\XXX(t,x))$ 
on $\XXX(t,x)$, and $U(t,\XXX(t,x))$ is explicitly known. 
From this and the boundedness of $U$ it is clear
 that we can find a global solution to 
\begin{equation*}
\XXX(t,x) = x + \int_0^t U(s,\XXX(s,x))\;\d s + \int_0^t \sigma (\XXX(s,x))\circ \d W.
\end{equation*}

It follows as in the conservative solution that 
\begin{itemize}

\item[(i)]  ${\XXX}(t,x)$ also satisfies
\begin{equation*}
\d {\XXX}(t,x) = U(t,\XXX(t,x)) \;\d t + \sigma(\XXX(t,x))\circ \d W
\end{equation*}
up to $t = t^*_x$, and remains well-defined beyond this time, and

\item[(ii)]  $Q(t,\XXX(t,x)) =  (\pd_x U)(t,\XXX(t,x))$ satisfies \eqref{eq:qX_sde2}.
\end{itemize}

To prove \eqref{eq:energy_dissp} we invoke 
Lemma \ref{thm:technical} (in particular, \eqref{eq:u_simple}) 
and \eqref{eq:dX_dx} exactly as in the proof 
immediately foregoing:
\begin{align*}
\int |Q(t,x)|^2\;\d x&= \int |Q(t,\XXX(t,y))|^2 \frac{\pd \XXX(y)}{\pd y}\;\d y \\
&= \int \frac{\exp(-\sigma' W(t))}{\frac{1}{Q(0,y)} + \frac12\int_0^t \exp(-\sigma' W(t))\;\d s}\\
&\qquad \times \mathds{1}_{\{t \le t^*_y\}} Q(t,\XXX(t,y))
	 \exp\Big(\int_0^t Q(s,\XXX(s,y))\;\d s + \sigma' W(t)\Big)\;\d y\\
&= \int Q(0,y) ^2 \exp(-\sigma' W(t))\mathds{1}_{\{t \le t^*_y\}}\;\d y.
\end{align*}

We also show $Q \in C([0,T];H^{-1}_\loc(\R))$ by 
showing that $\|U(t)\|_{L^2(B)}$ is continuous in time. 
As before we have
\begin{align*}
\int_B |U(t,x)|^2\;\d x &= \int_B \bigg| \int_{-\infty}^x q_0(y) 
	\Big(1 +\frac{q_0(y)}{2}\int_0^t \exp(-\sigma' W(s))\;\d s\Big)
		\mathds{1}_{\{t < t^*_y\}}\;\d y \bigg|^2\;\d x.
\end{align*}
Continuity follows as in part (ii) of the proof of Theorem \ref{thm:gen_cons}. 
The only difference is continuity at wave-breaking. 
This in turn follows from Lemma \ref{thm:technical}, 
where this time we invoke its main conclusion that at $t^*_y$, 
the integrand of the inner integral, $\mathfrak{u}(t,y)$, 
tends $\mathbb{P}$-almost surely to nought. 
In dissipative solutions, we continue $U$ past wave-breaking 
by simply setting $\pd_x U(t,x)$ to be nought 
after $t = t^*_y$ for a $y$ where $\XXX(t,y) = x$.

\end{proof}

Finally, as in the case of ``box''-type initial data, 
we retain the Oleinik-type one-sided estimate:
\begin{corollary}\label{thm:one_sided_estm_gen}
Suppose $\sigma'' = 0$ and $q_0 \in L^1(\R) \cap L^2(\R)$. 
Then the dissipative solution $Q$ with initial condition 
$Q(0) = q_0$ in $L^1(\R) \cap L^2(\R)$ satisfy 
$\mathbb{P}$-almost surely the following one-sided bound:
\begin{align*}
Q(t,\XXX(t,y)) \le \frac{\exp(-\sigma' W(t))}{\frac1{\max(q_0(y),0^+)} 
		+ \frac12\int_0^t \exp(-\sigma' W(s))\;\d s}.
\end{align*}

\end{corollary}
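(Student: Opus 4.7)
The plan is to reduce the inequality to the explicit representation of $Q(t,\XXX(t,y))$ supplied by Theorem \ref{thm:gen_dissp}, and then to compare it with the right-hand side by splitting on the sign of $q_0(y)$. The notation $\max(q_0(y),0^+)$ is to be read as $q_0(y)$ when $q_0(y) > 0$, and as the formal limit from above $0^+$ when $q_0(y) \le 0$, under the convention $1/0^+ = +\infty$, so that the right-hand side equals $0$ in the latter case.

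First, fix a Lebesgue point $y$ of $q_0$ with $q_0(y) > 0$. Inspect the defining equation \eqref{eq:t_break} for $t^*_y$, namely $-q_0(y)\int_0^{t^*_y}\exp(-\sigma' W(s))\,\d s = 2$. Since both factors on the left have the ``wrong'' sign (positive), this equation has no finite solution, so $t^*_y = +\infty$ almost surely. By Theorem \ref{thm:gen_dissp}, $Q(t,\XXX(t,y))$ is then given globally in $t$ by the closed-form expression in the statement, which coincides exactly with the right-hand side of the claimed bound (since $\max(q_0(y),0^+) = q_0(y)$). The inequality holds as equality.

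Second, fix $y$ with $q_0(y) \le 0$. It suffices to show $Q(t,\XXX(t,y)) \le 0$ for all $t \ge 0$, since the right-hand side is $0$. If $q_0(y) = 0$, the explicit formula gives $Q(t,\XXX(t,y)) \equiv 0$. If $q_0(y) < 0$, then for $t \ge t^*_y$ the dissipative prescription of Theorem \ref{thm:gen_dissp} sets $Q(t,\XXX(t,y)) = 0$, while for $0 \le t < t^*_y$ the numerator $\exp(-\sigma' W(t))$ is strictly positive, and the denominator
\[
\frac{1}{q_0(y)} + \tfrac12\int_0^t \exp(-\sigma' W(s))\,\d s
\]
starts negative at $t = 0$, grows monotonically in $t$, and by the definition \eqref{eq:t_break} of $t^*_y$ crosses $0$ exactly at $t = t^*_y$. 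Hence the denominator is strictly negative on $[0,t^*_y)$, and $Q(t,\XXX(t,y)) < 0$ there, so in all subcases $Q(t,\XXX(t,y)) \le 0$ as required.

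There is no serious obstacle: the whole argument is an exegesis of the explicit formula already proved in Theorem \ref{thm:gen_dissp}, combined with the sign analysis of the denominator in that formula relative to the wave-breaking time \eqref{eq:t_break}. The only subtlety worth noting is the interpretation of $\max(q_0(y),0^+)$ in the degenerate case $q_0(y) \le 0$, where the bound is strict on $[0,t^*_y)$ (when $q_0(y) < 0$) and realized with equality on $[t^*_y,\infty)$.
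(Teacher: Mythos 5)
Your proof is correct and follows the same route the paper intends: the corollary is an immediate consequence of the explicit global representation of $Q(t,\XXX(t,y))$ in Theorem \ref{thm:gen_dissp} (exactly as the box-data version, Corollary \ref{thm:one_sided_estm}, is deduced from Theorem \ref{thm:box_dissp}), and your sign analysis of the denominator relative to the wave-breaking time \eqref{eq:t_break} simply makes that deduction explicit. No gaps.
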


\begin{remark}[Discrete approximations]
From Lemma \ref{thm:qX_behaviours} (ii),
 it may be possible first to consider well-posedness 
 in the space of step functions, and thereafter to 
 extend this by a limiting procedure to more 
 general compactly supported $L^2(\R)$ functions. 
 As in the deterministic setting, see e.g., \cite{MR2182833}, 
 it is enough to add the boxes together:

Let $P = (x_0, \ldots, x_n)$ be a partition of 
$[x_0,x_n] \subset \R$, and $q_0$ be the function
\begin{align*}
q_0(x) = \sum_{i = 1}^n V_0^i 
		\mathds{1}_{(x_{i - 1}, x_{i + 1})}(x), \qquad V_0^i \in \R.
\end{align*}

For $i = 1,\ldots, n$, let $t^*_i$ be the 
wave-breaking time for the $i$th box. 
These are obviously not dependent on one another. 
Where $V^i_0 \ge 0$, we put $t^*_i = \infty$, 
$\mathbb{P}$-everywhere.

As neighbouring intervals are almost disjoint on $\R$ 
the analysis on any one box can be extended to show 
that where $U_i(t,x)$ is counterpart of \eqref{eq:U_construction},
\begin{align*}
U_i(t,x) = \begin{cases}0, & x \le \XXX(t,x_{i - 1}),\\
\frac{x - \XXX(t,x_{i - 1})}{\XXX(t,x_i) - \XXX(t,x_{i - 1})} \mathfrak{u}_i(t), &x \in (\XXX(t,x_{i -1}),\XXX(t,x_i)),\\
\mathfrak{u}_i(t), & x \ge \XXX(t,x_i),\end{cases}
\end{align*}
with (Recall that the left-hand side does not actually 
depend on some $X(s,\frac12(x_{i - 1} + x_i))$, but 
only on the value $q_0(\frac12(x_{i - 1} + x_i))$.)
\begin{align*}
\mathfrak{u}_i(t) 
:= \mathfrak{Q}(t,\frac12(x_{i - 1} + x_i)) 
	\exp\Big(\int_0^t \mathfrak{Q}(s,\frac12(x_{i - 1} + x_i))\;\d s + \sigma' W(t)\Big),
\end{align*} 
we can write the solution $u(t,x)$ as the sum
\begin{align*}
u(t,x) = \sum_{i = 1}^{n} U_i(t,x).
\end{align*}

This can be extended to an $L^2(\R)$ initial condition $q_0$ by setting
\begin{equation*}
V_0^i  = \fint_{x_{i - 1}}^{x_i} q_0(x)\;\d x,
\end{equation*}
so that the approximation with the partition $P$ is 
\begin{equation*}
q^P_0(x) =  \sum_{i = 1}^n V_0^i \mathds{1}_{(x_{i - 1}, x_{i + 1})}(x).
\end{equation*}
Next, suppose one can find spaces on which the set 
$\{u^P,q^P\}_{\|P\| > 0}$ is weakly compact, 
and on which the associated collection of laws 
$\{\mu^P\}_{\|P\| > 0}$ is correspondingly tight 
(see Ondrej\'at \cite{MR2659757} for conditions giving 
compact embeddings into spaces of functions weakly 
continuous in time, and $W^{k,p}_\loc(\R)$ in space). 
Invoking the Jakubowski--Skorohod theorem \cite{MR1453342} 
in taking the limit of a subsequence as $\|P \| \to 0$, 
one obtains a limit process whose law on a new stochastic 
basis is the same as that of the weak-star limit $\mu$ 
of the tight sequence $\{\mu^P\}$ on the original 
stochastic basis,  that is, the same conclusions 
as for the conventional Skorohod theorem, 
but applied to function spaces without the requisite separability.

It then only behooves one to conclude the 
argument by showing that the stochastic integrals 
against $\d \tilde{W}$, where $\tilde{W}$ is the 
representation of the original Brownian motion in 
the new stochastic basis, remain martingales, 
in the manner of \cite{MR3549199, MR3502597}.
\end{remark}

\section{Reconciling Different Notions of Solutions}\label{sec:characteristics_sols}

Finally we complement the results concerning 
conservative and dissipative solutions-along-characteristics 
by reconciling them with conservative and dissipative 
weak solutions, respectively, which are more traditional 
to the subject of partial differential equations. 
These notions of solutions are all defined in Section \ref{sec:pde_sols}. 

\begin{proposition}[Existence of Conservative Weak Solutions]\label{thm:consv_sol}
Suppose $q_0\in L^1(\R) \cap L^2(\R)$ and $\sigma'' = 0$. For processes given by
\begin{align*}
U(t,X(t,x))&= \int_{-\infty}^x q_0(y) \bigg(1 +\frac{q_0(y)}{2}\int_0^t \exp(-\sigma' W(s))\;\d s\bigg)\;\d y,\\
X(t,x) &= x + \int_0^t U(s,X(s,x))\;\d s + \int_0^t \sigma (X(s,x))\circ \d W,\\
Q(t,X(t,x)) &= \exp(-\sigma' W(t))\bigg[\frac1{q_0(x)}+ \frac12\int_0^t \exp(-\sigma'W(s))\;\d s\bigg]^{-1},
\end{align*}
the function defined by
\begin{equation*}
q(t,x) = Q(t,X(t,y)),
\end{equation*}
where $y \in \R$ satisfies $x = X(t,y)$, is a conservative weak solution.
\end{proposition}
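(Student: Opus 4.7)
The plan is to verify the four requirements of Definition \ref{def:weak_sol} in turn: (a) $q$ is well-defined and $\{\mathscr{F}_t\}$-adapted with the stated integrability, (b) $u$ is absolutely continuous in $x$ with $u\to 0$ as $x\to -\infty$ and $q=\pd_x u$ a.e., (c) the weak equation \eqref{eq:weak_form} holds for every $\varphi\in C^\infty_0(\R)$, and (d) deduce the conservative identity \eqref{eq:sol_cons}. Items (a)--(b) are essentially bookkeeping: Theorem~\ref{thm:gen_cons} already supplies an $\{\mathscr{F}_t\}$-adapted process $Q$ with $Q\in L^2(\Omega\times[0,T]\times\R)$ and $Q\in C([0,T];H^{-1}_\loc(\R))\cap L^\infty([0,T];L^2(\R))$ $\mathbb{P}$-a.s.; the absolute continuity of $u(t,\dott)$ and the limit at $-\infty$ follow from the explicit formula and $q_0\in L^1(\R)$, while the well-posedness of the prescription $q(t,x):=Q(t,X(t,y))$ with $X(t,y)=x$ is exactly Lemma~\ref{thm:general_stoch_flow_diffeo}: when the preimage is not unique, $Q$ vanishes on the ambiguous points, so the value is independent of the choice.

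The heart of the argument is step (c), which I would carry out by a Stratonovich product--rule computation in Lagrangian coordinates followed by a change of variables back to Eulerian coordinates. For $\varphi\in C^\infty_0(\R)$, write $\phi(t,y):=\varphi(X(t,y))$, $\mathcal{Q}(t,y):=Q(t,X(t,y))$ and $J(t,y):=\pd_y X(t,y)$. Theorems~\ref{thm:gen_cons} and \eqref{eq:dX_dx} give the three SDEs
\begin{align*}
\d\phi &= \varphi'(X)\,U(t,X)\,\d t + \varphi'(X)\,\sigma(X)\circ\d W,\\
\d\mathcal{Q} &= -\tfrac12 \mathcal{Q}^2\,\d t - \sigma'(X)\,\mathcal{Q}\circ\d W,\\
\d J &= \mathcal{Q}\,J\,\d t + \sigma'(X)\,J\circ\d W.
\end{align*}
Applying the Stratonovich product rule to $\phi\mathcal{Q}J$ (the three stochastic terms telescope thanks to the opposite signs in $\d\mathcal{Q}$ and $\d J$), one finds
\[
\d(\phi\mathcal{Q}J) = \big[\varphi'(X)\,U\,\mathcal{Q}J + \tfrac12\varphi(X)\,\mathcal{Q}^2 J\big]\,\d t + \varphi'(X)\,\sigma(X)\,\mathcal{Q}J\circ\d W.
\]
Integrating in $y$ and changing variables $x=X(t,y)$, whose Jacobian is exactly $J$ on the region where characteristics have not yet met, converts each integrand into the Eulerian form $\pd_x\varphi\, uq$, $\tfrac12\varphi q^2$, and $\pd_x\varphi\,\sigma q$, respectively. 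This yields \eqref{eq:weak_form}. I would isolate as a lemma the Fubini/measurable-selection step that justifies integrating the SDEs simultaneously in $y$ and then swapping the $\d y$ integral with the Stratonovich integral; this is standard once $Q$ and $X$ are jointly progressively measurable and one has the $L^2_{\omega,t,x}$ bound from \eqref{eq:energy_consv} together with the pathwise boundedness of $J$ on compact sets in $y$.

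Step (d) then follows from the same machinery applied to $\mathcal{Q}^2 J$, using $\d\mathcal{Q}^2 = -\mathcal{Q}^3\,\d t - 2\sigma'(X)\mathcal{Q}^2\circ\d W$. Alternatively, since the noise is linear ($\sigma''\equiv 0$), one can simply convert the Stratonovich noise to It\^o form as in Remark~\ref{rem:strat_to_ito} and compare with the renormalised identity established in Proposition~\ref{thm:aprioriestimates}; the global energy balance \eqref{eq:max_energ_cons1}, already derived en route in Theorem~\ref{thm:gen_cons}, supplies the equality (as opposed to inequality) version and hence \eqref{eq:sol_cons}.

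The main obstacle I anticipate is the change of variables $x=X(t,y)$: the map $X(t,\cdot)$ is only a $\mathbb{P}$-a.s.\ homeomorphism before wave-breaking, and at and after the random time $t^*_y$ the Jacobian $J$ may vanish on positive-measure sets of $y$'s. Handling this requires showing that the set of $(t,y)$ with $J(t,y)=0$ contributes nothing to the Eulerian integrals (because $\mathcal{Q}J$ and hence $\varphi(X)\mathcal{Q}J$ vanish there by \eqref{eq:u_simple}) and that the Eulerian $q$ defined through the preimage coincides Lebesgue-a.e.\ with the pushforward of $\mathcal{Q}$ under $X(t,\dott)$. Once this is settled --- essentially via Lemma~\ref{thm:technical} and the monotonicity of $J$ in $t$ on $\{q_0<0\}$ --- the calculation pushes through verbatim and the proof is complete.
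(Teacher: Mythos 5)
Your proposal is correct and follows essentially the same route as the paper: the key step in both is the Stratonovich product rule applied in Lagrangian coordinates to $\varphi(X(t,y))\,Q^2(t,X(t,y))\,\pd_y X(t,y)$, with the opposite signs in the SDEs for $Q(t,X)$ and for the Jacobian $\pd_y X$ producing the cancellations, followed by the change of variables $x=X(t,y)$ (valid since $\pd_y X\ge 0$ and the wave-breaking set is negligible in $t$). Your step (c) --- running the identical computation on $\varphi(X)\,\mathcal{Q}\,J$ to verify the weak form \eqref{eq:weak_form} itself --- is a step the paper leaves implicit (its proof goes directly to the energy balance \eqref{eq:sol_cons}), and your telescoping calculation there is correct, so if anything your write-up is slightly more complete than the paper's.
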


\begin{proposition}[Existence of Dissipative Weak Solutions]\label{thm:dissp_sol}
Suppose $q_0\in L^1(\R) \cap L^2(\R)$ and $\sigma'' = 0$. 
For a collection $\{t^*_x\}$ of random variables defined by
\begin{equation*}
-q_0(x) \int_0^{t^*_x} \exp\big(-\sigma' W( s)\big)\;\d s  = 2,
\end{equation*}
indexed by the Lebesgue points $x$ of $q_0(x)$, 
and processes given by
\begin{align*}
U(t,X(t,x))&= \int_{-\infty}^x q_0(y) \Big(1 +\frac{q_0(y)}{2}\int_0^t \exp(-\sigma' W(s))\;\d s\Big)\mathds{1}_{\{t < t^*_y\}}\;\d y,\\
X(t,x) &= x + \int_0^t U(s,X(s,x))\;\d s + \int_0^t \sigma (X(s,x))\circ \d W,\\
Q(t,X(t,x)) &= \begin{cases} \exp(-\sigma' W(t))\bigg[\frac1{q_0(x)} + \frac12\int_0^t \exp(-\sigma'W(s))\;\d s\bigg]^{-1}, 
& t < t^*_x,\\ 
0, & t > t^*_x,\end{cases} 
\end{align*}
the function defined by
\begin{equation*}
q(t,x) = Q(t,X(t,y)),
\end{equation*}
where $y \in \R$ satisfies $x = X(t,y)$, 
is a dissipative weak solution.

\end{proposition}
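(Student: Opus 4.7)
The plan is to parallel the proof of Proposition \ref{thm:consv_sol} while accounting for the truncation at the wave-breaking times $t^*_y$. First I would verify that $q(t,x)=Q(t,X(t,y))$ with $X(t,y)=x$ is unambiguously defined even after wave-breaking. By Theorem \ref{thm:gen_dissp} and Lemma \ref{thm:general_stoch_flow_diffeo}, for deterministic $(t,x)$ a pre-image $y$ exists $\mathbb{P}$-a.s.; any two pre-images $y_1,y_2$ agree except on $\{t^*_{y_1}=t^*_{y_2}=t\}$, on which however $Q(t,X(t,y_i))=0$ by the truncation, so $q(t,x)$ is well-defined. The regularity assertions $Q\in L^2(\Omega\times[0,T]\times\R)\cap C([0,T];H^{-1}_{\mathrm{loc}}(\R))\cap L^\infty([0,T];L^2(\R))$ a.s.\ and $U\in L^2(\Omega\times[0,T];\dot H^1)\cap C([0,T]\times\R)$ a.s.\ with $U\to 0$ as $x\to-\infty$ are inherited directly from Theorem \ref{thm:gen_dissp}, and the dissipative upper bound required by Definition \ref{def:weak_diss_sols} follows from the Oleinik-type one-sided estimate in Corollary \ref{thm:one_sided_estm_gen}, since $t^*_y>0$ only where $q_0(y)<0$ and on any compact $E\subset(0,\infty)\times\R$ the right-hand side of Corollary \ref{thm:one_sided_estm_gen} is $\mathbb{P}$-a.s.\ bounded.

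Next I would derive the weak formulation \eqref{eq:weak_form}. The idea is to start from the SDE \eqref{eq:QSDE_Lagrange} satisfied by $Q(t,X(t,x))$, multiply by a test function $\varphi(X(t,x))$, and then integrate $\d x$ over $\R$. For $t<t^*_x$ this is the same computation as in the conservative case: the It\^o--Wentzell/Stratonovich chain rule converts $\d Q(t,X(t,x))$ together with $\d X(t,x)=U\,\d t+\sigma\circ\d W$ into $(\pd_t+U\pd_x+\sigma\pd_x\circ\dot W)Q+\tfrac12 Q^2+\sigma' Q\circ\dot W=0$ along characteristics. Pushing the equation forward under the map $y\mapsto X(t,y)$ (using the Jacobian $\pd X/\pd y = \exp(\int_0^t\mathfrak{Q}(s,y)\,\d s+\sigma'W(t))$ from \eqref{eq:dX_dx}) recovers the Eulerian weak form \eqref{eq:weak_form}. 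The boundary contribution at $t^*_y$ from the truncation is handled by Lemma \ref{thm:technical}: the quantity $\mathfrak{u}(t,y)=\mathfrak{Q}(t,y)\,\pd X/\pd y$ vanishes continuously as $t\nearrow t^*_y$, so that setting $Q=0$ for $t>t^*_y$ introduces no singular contribution to the weak form.

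Concretely, I would test with $\varphi\in C^\infty_0(\R)$ and write, for each $t$,
\begin{align*}
\int\varphi(x)q(t,x)\,\d x
&= \int \varphi(X(t,y))\,Q(t,X(t,y))\,\tfrac{\pd X(t,y)}{\pd y}\,\d y,
\end{align*}
then apply the Stratonovich chain rule to each of the three processes $\varphi(X(t,y))$, $Q(t,X(t,y))$, and $\pd X/\pd y$ separately. The $\d t$ terms will reproduce $\int(\pd_x\varphi\, uq+\tfrac12\varphi q^2)\,\d x$ on the dissipative support $\{t<t^*_y\}$ and vanish on its complement, while the $\circ\,\d W$ terms will combine into $\int\pd_x\varphi\,\sigma q\,\d x\circ\d W$; the indicator $\mathds{1}_{\{t<t^*_y\}}$ does not generate an additional surface term because $Q\,\pd X/\pd y\to 0$ at $t^*_y$ by Lemma \ref{thm:technical}. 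The condition $\lim_{r\to-\infty}u(r)=0$ follows from $q_0\in L^1(\R)$ and the explicit expression for $U(t,X(t,x))$.

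The step I expect to require the most care is controlling the change of variables $y\mapsto X(t,y)$ across the (random, $y$-dependent) wave-breaking set. Pre-wave-breaking the Jacobian is smooth and positive (Lemma \ref{thm:general_stoch_flow_diffeo}); past $t^*_y$, the mass $Q^2\,\pd X/\pd y$ that would have been transported is annihilated, and I must verify that this annihilation is compatible with \eqref{eq:weak_form} rather than introducing a spurious distributional source. The cleanest route is to show that the full-measure set $\{(t,y):t<t^*_y\}$ is open in $[0,T]\times\R$ (by continuity of $t^*_y$ in $y$, which follows from monotonicity of $t\mapsto\int_0^t e^{-\sigma'W(s)}\,\d s$ and continuity of $q_0$ at Lebesgue points), localise the push-forward to this set, and then pass to the limit using Lemma \ref{thm:technical} and dominated convergence justified by the a priori bounds of Proposition \ref{thm:aprioriestimates}.
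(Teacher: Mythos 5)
Your proposal follows essentially the same route as the paper: push the Lagrangian SDE forward under $y\mapsto X(t,y)$ with the explicit Jacobian \eqref{eq:dX_dx}, use Lemma \ref{thm:technical} to see that the truncation at $t^*_y$ contributes no singular term to the (linear-in-$Q$) weak form while the quadratic energy identity acquires a nonnegative defect measure, and invoke Corollary \ref{thm:one_sided_estm_gen} for the upper bound required by Definition \ref{def:weak_diss_sols}. One small caution: the openness of $\{(t,y):t<t^*_y\}$ via continuity of $t^*_y$ in $y$ that you suggest at the end is not available for general $q_0\in L^1(\R)\cap L^2(\R)$, where $t^*_y$ is only measurable in $y$, but it is also not needed --- measurability in $y$ together with the pointwise-in-$y$ temporal continuity from Lemma \ref{thm:technical} suffices for the dominated-convergence step, which is how the paper proceeds.
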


\begin{proof}[Proof of Proposition \ref{thm:consv_sol}]

Since the process $\tilde{Q}(t) = Q(t,X(t,y))$ satisfies \eqref{eq:qX_sde2},
\begin{align*}
\d \tilde{Q}(t) = -\frac{1}{2}\tilde{Q}^2(t) \;\d t - \sigma' \tilde{Q}(t) \circ \d W,
\end{align*}
up to $t < t^*_y$, pointwise for $y$ in the 
set of Lebesgue points of $q_0$, 
by the It\^{o} formula it manifestly holds 
that up to the same stopping time, 
\begin{align}\label{eq:tildeQ_sq}
\d \tilde{Q}^2(t) =  - \tilde{Q}^3(t) \;\d t - 2 \sigma' \tilde{Q}^2(t) \circ \d W.
\end{align}

On $\mathbb{P}$-almost every path, except at the 
time $t = t^*_y$, we have shown that these equations remain valid. 
This is possible because we are only concerned 
with the Lebesgue points of $q_0$, which is a deterministic, 
time independent object. Let $\varphi \in C^\infty_0(\R)$. 
First we observe that since $\pd X(t,y)/\pd y > 0$ 
for almost every $(t,y) \in [0,T]\times \R$, 
$\mathbb{P}$-almost surely, it holds that 
for almost every $t$, $\mathbb{P}$-almost surely,
\begin{align}\label{eq:almost_homeomorphism}
\int_\R Q^2(t,X(t,y)) \varphi(X(t,y))\;X(\d y) 
	= \int_\R Q^2(t,x) \varphi(x) \;\d x,
\end{align}
where we have used $X(\d y)$ instead of 
$\d X(y)$ to denote the {\it deterministic} 
differential to emphasise integration in the 
{\em spatial}, and not the temporal variable. 
We can disregard the measure zero set 
in $t$ (wave-breaking only 
occurs once along each characteristic) as 
we shall be integrating over $t$.

 By \eqref{eq:dX_dx}, in the sense of It\^o, 
 we have the $\mathbb{P}$-almost sure equality
\begin{align}
\d\bigg(\int_\R Q^2(t,X(t,y))& \varphi(X(t,y))\;X(\d y)\bigg) \notag\\
&= \d \bigg(\int_\R Q^2(t,X(t,y)) \varphi(X(t,y)) \frac{\pd X(t,y)}{\pd y}\;\d y\bigg)\notag\\
& =  \int_\R \d Q^2(t,X(t,y)) \circ \Big( \varphi(X(t,y)) \frac{\pd X(t,y)}{\pd y}\Big)\;\d y \notag\\
& \quad  + \int_\R Q^2(t,X(t,y)) \circ \d \Big( \varphi(X(t,y)) \frac{\pd X(t,y)}{\pd y}\Big) \;\d y.\label{eq:QX_energy}
\end{align}

We already know how to expand $\d Q^2(t,X(t,y))$ 
from \eqref{eq:tildeQ_sq}. Therefore we inspect the 
second summand in the final line of the foregoing 
calculation. Since $\varphi$ is a smooth, deterministic 
function, by the regular chain rule,
\begin{align*}
\d \Big( \varphi(X(t,y)) &\frac{\pd X(t,y)}{\pd y}\Big)  \\
& = \d \varphi(X(t,y)) \circ \frac{\pd X(t,y)}{\pd y} 
	+ \varphi(X(t,y))\circ \d\frac{\pd X(t,y)}{\pd y}\\
 & = \frac{\pd X(t,y)}{\pd y}\circ \Big(\pd_x \varphi(X(t,y)) \circ \d X(t,y)\Big) 
 	+  \varphi(X(t,y))\circ \d\frac{\pd X(t,y)}{\pd y}.
\end{align*}
Recalling Remark \ref{rem:non_assoc_stratonovich_prod}, 
and using \eqref{eq:dX_dx} and the equation for 
the characteristics in the theorem statement,
\begin{align*}
\d \Big( \varphi(X(t,y))& \frac{\pd X(t,y)}{\pd y}\Big) \\
 & = \frac{\pd X(t,y)}{\pd y}\pd_x \varphi(X(t,y)) \circ \d X(t,y) 
 	+  \varphi(X(t,y))\circ \d\frac{\pd X(t,y)}{\pd y}\\
 &=  \pd_x\varphi(X(t,y))U(t,X(t,y))\frac{\pd X(t,y)}{\pd y} \;\d t\\
 &\quad + \pd_x\varphi(X(t,y))\sigma(X(t,y))\frac{\pd X(t,y)}{\pd y}\circ \;\d W\\
&\quad + \varphi(X(t,y)) \circ \Big( Q(t,X(t,y)) \frac{\pd X(t,y)}{\pd y}\; \d t 
	+ \sigma' \frac{\pd X(t,y)}{\pd y}\circ \d W \Big). 
\end{align*}
Inserting this into \eqref{eq:QX_energy} and 
using \eqref{eq:tildeQ_sq} and \eqref{eq:almost_homeomorphism}, 
we recover the weak energy balance \eqref{eq:sol_cons}, 
where $\pd_{xx}^2\sigma =0$ in the linear $\sigma$ case.
\end{proof}

For dissipative solutions, we shall be multiplying 
by an extra factor of $\mathds{1}_{\{t < t^*_y\}}$ 
in the proof below. 
The selection of $y$ for times $t > t^*_y$ has in 
fact been dealt with in Section \ref{sec:well_posedness1}, 
where we have shown how to extend characteristics 
globally through a wave-breaking point.

\begin{remark}
If it can be shown that any conservative weak solution 
$(u,q)$ can be used to construct characteristics
\begin{equation*}
\d X(t,y) = u(t,X(t,y))\;\d t + \sigma(X(t,y))\circ \d W
\end{equation*}
that are for almost every $t \in [0,T]$ and 
$\mathbb{P}$-almost surely a $C^1$ surjection of $\R$
for which $\pd X/\pd x \ge 0$, then 
the calculations of the foregoing proof 
can be done in reverse to attain the reverse implication 
that conservative weak solutions are necessarily 
conservative solutions-along-characteristics. 
This would imply uniqueness of solutions. 
We relegate this proof to an upcoming work.
\end{remark}

\begin{proof}[Proof of Prop. \ref{thm:dissp_sol}]
The proof here essentially follows the one for 
Proposition \ref{thm:consv_sol} with the exception that 
there is a defect measure arising from the temporal 
derivative, and we employ \eqref{eq:tildeQ_sq} 
in evaluating the quantity:
\begin{align*}
\d (Q^2(t,X(t,y)) \mathds{1}_{\{t \le t^*_y\}}) 
	= \mathds{1}_{\{t \le t^*_y\}}\,\d Q^2(t,X(t,y)) 
			- Q^2(t,X(t,y))\delta(t - t^*_y)\,\d t,
\end{align*}
understood in the weak sense. (See 
Appendix \ref{sec:appendixB} for
 the deterministic analogue, 
along with a discussion of 
this ``defect measure''.)

Since $Q^2 \delta \ge 0$, the inequality replaces 
the equal sign when this measure is suppressed. 
This is the weak energy inequality \eqref{eq:sol_dissp}. 
See also \eqref{eq:defect_meas} for the deterministic analogue.

Almost sure boundedness from above is given by 
Lemma \ref{thm:one_sided_estm_gen}. 
Except on the set $\{\omega: t^*_x \ge t\}$, 
for $\mathbb{P}$-almost every $\omega$ there 
exists a unique $y$ such that $X(t,y) = x$. 
On that set we know that $Q(t,x)$ can be bounded by $0$. 
Since every $x = X(t,y)$ can be reached from 
some $y$ at $t =0$ on a characteristic, 
the one sided estimate holds for $Q(t,x)$ in the general case.
\end{proof}

\begin{remark}[Maximal dissipation of of energy]
With regards to comments following Definition \ref{def:weak_diss_sols}, 
we intend to show in an upcoming work that 
maximal energy dissipation is given by
 \ref{eq:max_energ_dissp}, as well as the 
 uniqueness of dissipative weak solutions.
\end{remark}

\appendix
\section{Lagrangian and Hamiltonian Approaches to the Hunter--Saxton equation}\label{sec:variation_hamiltonian}
Here we motivate the stochastic Hunter--Saxton equation \eqref{eq:sHS1} that we study
in this paper.

From Hunter--Zheng \cite{MR1361013} we know that the 
evolution part of the Hunter--Saxton equation is given by
\begin{equation}\label{eq:hamil}
\pd_t u=D^{-1}\frac{\delta H(u)}{\delta u},
\end{equation}
where the Hamiltonian reads
\begin{equation*}
H(u)=\frac12\int u (\pd_x u)^2 \, \d x,
\end{equation*}
and $D^{-1}=\int^x$.   We find that
\begin{equation*}
\frac{\delta H(u)}{\delta u}= \frac12 (\pd_x u)^2-\pd_x(u\pd_x u),
\end{equation*}
which yields
\begin{equation*}
\pd_ x(u_t+u\pd_x u)=\frac12 (\pd_x u)^2.
\end{equation*}
Note that we can write \eqref{eq:hamil} as
\begin{equation*}
\pd_t q=\frac{\delta H(u)}{\delta u}.
\end{equation*}

If we perturb the Hamiltonian as in, e.g., \cite{MR3488697}, by
\begin{equation*}
\tilde{H}(u)=H(u)+\frac12\int \sigma (\pd_x u)^2\circ \dot{W} \, \d x,
\end{equation*}
we find
\begin{equation*}
\frac{\delta \tilde{H}(u)}{\delta u}= \frac12 (\pd_x u)^2-\pd_x(u\pd_x u)-\pd_x(\sigma \pd_x u)\circ \dot{W},
\end{equation*}
which yields
\begin{equation}
\pd_x(\pd_t u+u\pd_x u)=\frac12 (\pd_x u)^2-\pd_ x (\sigma \pd_x u)\circ \dot{W},
\end{equation}
and this is the stochastic Hunter--Saxton equation.

An alternative approach is based on a Lagrangian 
formulation. Let $L=L(u,\pd_t u,\pd_x u) $ denote the Lagrangian.
If we take the first variation
\begin{equation*}
\delta \iint L(u,\pd_t u,\pd_x u) \;\d x\,\d t,
\end{equation*}
we find that the Euler--Lagrange equation reads
\begin{equation*}
\frac{\partial}{\partial x}  \frac{\partial L}{\partial (\pd_x u)} + \frac{\partial}{\partial t}  \frac{\partial L}{\partial (\pd_t u)} - \frac{\partial L}{\partial u}=0.
\end{equation*}

Introduce \cite{MR1361013}
\begin{equation*}
L(u,\pd_t u,\pd_x u)=\pd_x u\pd_t u+u (\pd_x u)^2+\sigma (\pd_x u)^2 \circ \dot{W}.
\end{equation*}
Then we find again that
\begin{equation}
\pd_x (\pd_t u+u\pd_x u)=\frac12 (\pd_x u)^2-\pd_x (\sigma \pd_x u)\circ \dot{W}.
\end{equation}

\section{A-priori Bounds}\label{sec:aprioribounds}
In this appendix we shall establish the a-priori estimates of Proposition \ref{thm:aprioriestimates}, which we re-state below:
\begin{proposition}[A-priori bounds]
Let $q$ be a conservative or dissipative weak solution to the stochastic Hunter--Saxton equation \eqref{eq:sHS1}, with $\sigma \in (C^2 \cap \dot{W}^{1,\infty}\cap \dot{W}^{2,\infty})(\R)$, and initial condition $q(0) = q_0 \in L^1(\R) \cap L^2(\R)$. The following bounds hold:
\begin{align}
\esssup_{t \in [0,T]}\Ex\| q(t)\|_{L^2(\R)}^2 & \le C_T \|q_0\|_{L^2(\R)}^2,\\
\Ex \|q\|_{L^{2 + \alpha} ([0,T]\times \R)}^{2 + \alpha} & \le  C_{T,\alpha} \|q_0\|_{L^2(\R)}^2,
\end{align}
for any $\alpha \in [0,1)$.
\end{proposition}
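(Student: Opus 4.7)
My plan is to prove both bounds simultaneously via the renormalization approach flagged in \eqref{eq:energy_consv_ap}--\eqref{eq:2plusalpha_prep}: apply the Itô formula to $\int S(q_\varepsilon)\,\d x$ for a mollified solution $q_\varepsilon = q * \rho_\varepsilon$ and a carefully chosen $S \in W^{2,\infty}(\R)$, pass $\varepsilon \to 0$ via a DiPerna--Lions style commutator argument, and finally truncate away the Lipschitz cutoff in $S$. The Itô form \eqref{eq:ito_formulation} is more convenient than the Stratonovich form since the $\frac12 \partial_x(\sigma \partial_x(\sigma q))$ drift and the Itô correction $\frac12 S''(q)(\partial_x(\sigma q))^2$ are exactly the terms we need to combine.

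\textbf{The $L^2$ estimate.} Taking first $S_\ell(q) = q^2 \wedge (2\ell|q|-\ell^2) \in W^{2,\infty}$ and applying the Itô formula to $\int S_\ell(q_\varepsilon)\,\d x$, integration by parts produces two cubic contributions, namely $\int S_\ell'(q)\partial_x(uq)\,\d x$ (which, after moving one derivative and using $\partial_x u = q$, contributes $-\tfrac12 \int q^3\,\d x$ in the limit $\ell \to \infty$) and $\int S_\ell'(q)\tfrac12 q^2\,\d x = \tfrac12 \int q^3\,\d x$; these cancel exactly. The remaining $\sigma$-dependent terms — the drift $\tfrac12 \partial_x(\sigma \partial_x(\sigma q))$ and the quadratic variation $\tfrac12 (\partial_x(\sigma q))^2$ — reassemble after integration by parts into the coefficient $(\partial_x\sigma)^2 - \tfrac14 \partial_{xx}^2\sigma^2 \in L^\infty$. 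Passing $\varepsilon \to 0$ and then $\ell \to \infty$ (the truncation is precisely what makes the commutator estimate legal and the convergence of the quadratic term uniform) yields \eqref{eq:energy_consv_ap} for conservative solutions, and the analogous inequality for dissipative solutions by discarding the non-negative defect measure at wave-breaking. Grönwall's inequality applied to $t \mapsto \Ex \|q(t)\|_{L^2}^2$ then produces \eqref{eq:q_L2bound}.

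\textbf{The $L^{2+\alpha}$ estimate.} Now choose $S(q) = q|q|^\alpha$ (again with a $W^{2,\infty}$ truncation), so $S'(q) = (1+\alpha)|q|^\alpha$ and $S''(q) = \alpha(1+\alpha)|q|^{\alpha-1}\mathrm{sgn}(q)$. The same transport-plus-source computation now produces $\int S'(q)\partial_x(uq)\,\d x = -\int|q|^{2+\alpha}\,\d x$ (by integration by parts and $u_x = q$) and $\int S'(q)\tfrac12q^2\,\d x = \tfrac{1+\alpha}{2}\int|q|^{2+\alpha}\,\d x$, whose sum $-\tfrac{1-\alpha}{2}\int|q|^{2+\alpha}\,\d x$ has a \emph{signed} coefficient that survives precisely because $\alpha \in [0,1)$. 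Rearranging and using boundedness of $\sigma', \sigma''$ produces \eqref{eq:2plusalpha_prep}. To close the estimate I bound the right-hand side: since $|q|^{1+\alpha} \le |q| + |q|^2$ pointwise for $\alpha \in [0,1)$, it suffices to control $\Ex\|q(t)\|_{L^1}$, which follows from a parallel renormalization with $S(q)=|q|$ (approximated by $S_\ell$ as above) yielding $\tfrac{d}{dt}\Ex\|q(t)\|_{L^1} \le \tfrac12 \Ex\|q(t)\|_{L^2}^2$, and the $L^2$ bound just established. Combining gives \eqref{eq:q_Lalphabound}.

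\textbf{Main obstacle.} The principal technical nuisance is not any single computation above but rather justifying the whole renormalization on a low-regularity weak solution: the Itô formula applied to $\int S(q_\varepsilon)\,\d x$ requires controlling the mollification commutator $[\rho_\varepsilon *, \partial_x(u\,\cdot\,)]q$ and, crucially, its analogue $[\rho_\varepsilon *, \partial_x(\sigma\, \cdot\,)]q$ inside the Itô correction $\tfrac12 S''(q_\varepsilon)(\partial_x(\sigma q_\varepsilon))^2$, where the quadratic dependence on the noise coefficient competes with the singularity of $S''$ for $\alpha < 1$. This is precisely why the Lipschitz cutoff $S_\ell$ is introduced first, limits $\varepsilon \to 0$ are taken, and only then is $\ell \to \infty$ performed — the order of the limits is essential. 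The detailed execution of this procedure is the content of Lemma~\ref{thm:mollifying_errors} in Appendix~\ref{sec:aprioribounds}.
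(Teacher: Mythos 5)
Your overall architecture --- mollify, renormalise with a $W^{2,\infty}$ truncation $S_\ell$, control the DiPerna--Lions commutators including the one inside the It\^o correction, send $\ep\to0$ \emph{before} $\ell\to\infty$, then Gr\"onwall --- is exactly the paper's (Appendix \ref{sec:aprioribounds}), and your emphasis on the order of limits is right. But there is a genuine gap in the $L^2$ step: the cubic terms cancel exactly only for $S(v)=v^2$. For the truncated entropy $S_\ell(v)=v^2\wedge(2\ell|v|-\ell^2)$ the combination $qS_\ell(q)-\tfrac12 q^2S_\ell'(q)$ equals $\ell\,q(|q|-\ell)\mathds{1}_{\{|q|>\ell\}}$, and since $\ell$ must stay finite until after $\ep\to0$, this remainder has to be controlled for a function that is a priori only in $L^2_x$. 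It is not signed: the part where $q<-\ell$ can be discarded, but the part where $q>\ell$ is as large as the quantity being estimated. This is precisely where the hypothesis ``conservative or dissipative'' enters, and your sketch never invokes it. For conservative solutions the paper does not run the renormalisation for the $L^2$ bound at all --- the energy balance \eqref{eq:sol_cons} is \emph{part of} Definition~\ref{def:cons_sols}, and Gr\"onwall is applied to it directly. For dissipative solutions, the one-sided bound $q<M_{\omega,E}$ of Definition~\ref{def:weak_diss_sols} makes $q^{+}(|q|-\ell)\mathds{1}_{\{|q|>\ell\}}$ vanish identically once $\ell$ is large. Your phrase ``discarding the non-negative defect measure'' presupposes the defect has a sign, which is exactly what the Oleinik-type bound is needed for. (The analogous truncation remainder in the $L^{2+\alpha}$ step is harmless, since $vS_\ell(v)-\tfrac12v^2S_\ell'(v)\ge \tfrac{1-\alpha}{2}|v|^{2+\alpha}\mathds{1}_{\{\ell^{-1}<|v|\le\ell\}}\ge 0$ and only a lower bound via Fatou is required there.)

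Two further remarks. First, your closing of the $L^{2+\alpha}$ estimate via $|q|^{1+\alpha}\le|q|+|q|^2$ and a separate renormalisation with $S(v)=|v|$ (smoothed, so that $G_S$ and $vG_S'$ vanish in the limit and one gets $\frac{\d}{\d t}\Ex\|q(t)\|_{L^1}\le\tfrac12\Ex\|q(t)\|_{L^2}^2$) actually supplies the $L^1_x$ control that the paper's one-line appeal to ``interpolation between $L^1$ and $L^2$'' tacitly assumes; this is a useful addition, though it makes the constant in \eqref{eq:q_Lalphabound} depend on $\|q_0\|_{L^1}$ as well. Second, a sign slip: for $S(v)=v|v|^\alpha$ one has $\int S'(q)\pd_x(uq)\,\d x=\alpha\int|q|^{2+\alpha}\,\d x$, not $-\int|q|^{2+\alpha}\,\d x$, and the coercive term appears as $+\tfrac{1-\alpha}{2}\int|q|^{2+\alpha}$ in $\frac{\d}{\d t}\int S(q)\,\d x$; the resulting inequality \eqref{eq:2plusalpha_prep} is nonetheless the one you state.
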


Consider a standard mollifier  defined by 
$J_\ep(x)=\frac{1}{\ep}J\left(\frac{x}{\ep}\right)$ where
\begin{equation*} 
\quad 0\leq J \in C^\infty_c(\R), 
\quad \supp\, (J)\subseteq [-1,1], 
\quad J(-x)=J(x), 
\quad \int_{\R} J(x)\, \d x =1.
\end{equation*}
We write
\begin{equation*}
	h_\ep := J_\ep \star h	
\end{equation*}
for the (spatial) convolution of a function $h$.  
We prove the following technical lemma on mollifiers.
\begin{lemma}[Regularisation Lemma]\label{thm:molly}
Let $q$ be a weak solution to the stochastic 
Hunter--Saxton equation \eqref{eq:sHS1} with 
$\sigma \in (C^2 \cap \dot{W}^{1,\infty}\cap \dot{W}^{2,\infty})(\R)$. 
The mollified equation holds pointwise in $\R$ over $t < T$,
in the sense of It\^{o} that:
\begin{align}\label{eq:mollif_sHS}
 \d q_\ep = -  J_\ep \star  \pd_x (u q) \;\d t 
 	+ \frac{1}{2} J_\ep \star q^2\;\d t 
 		-  J_\ep \star\pd_x(\sigma q) \;\d W 
 			+ \frac{1}{2} J_\ep \star\pd_x (\sigma\pd_x(\sigma q) )\;\d t .
 \end{align}

In particular, for fixed $\ep$, there is a 
representative of $q_\ep$ (also called $q_\ep$) 
such that for each $\omega \in \Omega$,
\begin{equation*}
q_\ep(\omega) \in C([0,T]; C^\infty(\R)).
\end{equation*}
\end{lemma}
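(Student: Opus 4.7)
The plan is to start from the It\^o version of the weak formulation that is already assembled in Remark \ref{rem:strat_to_ito} via identity \eqref{eq:S_to_I_final}: for any fixed $\varphi \in C^\infty_0(\R)$ and $t \in [0,T]$, $\mathbb{P}$-almost surely,
\begin{equation*}
\int \varphi q \, \d x \bigg|_0^t = \int_0^t\!\!\int \bigl( \pd_x \varphi\, u q + \tfrac12 \varphi q^2\bigr) \d x\, \d s + \int_0^t\!\!\int \pd_x \varphi\, \sigma q\, \d x\, \d W + \tfrac12 \int_0^t\!\!\int \sigma q\, \pd_x(\sigma \pd_x \varphi)\, \d x\, \d s.
\end{equation*}
Into this identity I substitute the parametrised test function $\varphi_y(x) := J_\ep(y - x)$ for fixed $y \in \R$. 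Since $J_\ep$ is even, $\int \varphi_y q \, \d x = q_\ep(y)$, and $\pd_x \varphi_y(x) = -J_\ep'(y-x)$. One integration by parts on the two $\pd_x \varphi_y$ terms turns them into $-(J_\ep \star \pd_x(uq))(y)$ and $-(J_\ep \star \pd_x(\sigma q))(y)$, while two integrations by parts on the It\^o--Stratonovich correction term give $(J_\ep \star \pd_x(\sigma \pd_x(\sigma q)))(y)$. Rearranging recovers precisely the identity \eqref{eq:mollif_sHS} at each deterministic $y \in \R$.

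For the second assertion, I must upgrade a family of \emph{for each $y$} identities to a single pathwise statement of regularity in $C([0,T];C^\infty(\R))$. The spatial regularity is the easy half: every term on the right of \eqref{eq:mollif_sHS} is the convolution of a driving process $f \in \{uq,\, q^2,\, \sigma q,\, \sigma \pd_x(\sigma q)\}$ with a derivative of $J_\ep$, and since the $\pd_y^k J_\ep$ are bounded with compact support, the a-priori bounds of Prop.~\ref{thm:aprioriestimates} together with $\sigma \in W^{2,\infty}$ give each convolution as a smooth-in-$y$ process with every $y$-derivative continuous and uniform on compact sets of $y$. Temporal continuity of the three Lebesgue integrals then follows by dominated convergence, while the It\^o integral admits a version jointly continuous in $(t,y)$, indeed $C^\infty$ in $y$, by Kolmogorov's continuity criterion applied to $y$-difference quotients whose moments are controlled by the Burkholder--Davis--Gundy inequality.

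The main technical point to get right is null-set management: the weak identity holds $\mathbb{P}$-a.s.\ separately for each $\varphi_y$, but the conclusion requires a single full-measure event on which \eqref{eq:mollif_sHS} is true for every $y \in \R$ simultaneously. The standard remedy will be first to choose the exceptional set as the countable union, over $y \in \mathbb{Q}$, of the test-function null sets, yielding \eqref{eq:mollif_sHS} at every rational $y$ outside one $\mathbb{P}$-null set, and then to extend to all $y \in \R$ by the $y$-continuity of both sides established in the preceding paragraph. On this common good event I pick the smooth-in-$y$, continuous-in-$t$ version; redefining $q_\ep$ to be identically zero on the exceptional null set produces the pathwise inclusion $q_\ep(\omega) \in C([0,T];C^\infty(\R))$ for \emph{every} $\omega \in \Omega$, as required.
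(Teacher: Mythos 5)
Your proposal follows essentially the same route as the paper: derive \eqref{eq:mollif_sHS} by testing the It\^o form of the weak formulation against the translated mollifier $J_\ep(y-\cdot)$, obtain spatial smoothness from the convolution structure, and obtain temporal continuity of the stochastic integral from the Burkholder--Davis--Gundy inequality plus Kolmogorov's continuity criterion (the paper works in $H^\beta_\loc$ and then invokes Sobolev embedding, but this is the same mechanism). Your explicit handling of the null sets --- first a common full-measure event for all rational $y$, then extension to all $y$ by continuity --- is a point the paper leaves implicit, and is a welcome addition.

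One thing to repair: you invoke ``the a-priori bounds of Prop.~\ref{thm:aprioriestimates}'' to control the convolutions, but that proposition is itself \emph{proved using} Lemma~\ref{thm:molly} (via the renormalisation argument in Appendix~\ref{sec:aprioribounds}), so citing it here is circular. The integrability you actually need --- $q \in L^2(\Omega\times[0,T]\times\R)$, $q\in L^\infty([0,T];L^2(\R))$ and $u\in L^\infty([0,T];\dot H^1(\R))$ $\mathbb{P}$-almost surely --- is already built into Definition~\ref{def:weak_sol}, and this is what the paper relies on; replacing the citation fixes the argument without any other change.
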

\begin{proof}
The main point is to check that there $q_\ep$ is 
$\mathbb{P}$-almost surely pointwise continuous in time, 
so that there are no dissipative effects when 
an entropy is applied to it, and so that It\^o's 
formula can be applied pointwise in $x$.

By the Burkh\"older--Davis--Gundy inequality, 
for $\beta, \theta > 0 $ and deterministic times $s,t \in [0,T]$,  
\begin{align*}
\Ex \left\|\int_s^t J_\ep\underset{}{\star}\pd_x (\sigma q)\;\d W
	\right\|^\theta_{H^{\beta}_\loc} 
&\le  C \Ex\left( \int_s^t \left\|J_\ep\underset{}{\star}\pd_x (\sigma q) 
	\right\|_{H^{\beta}_{\loc}}^2 \;\d r\right)^{\theta/2}\\
&= C \Ex\left( \int_s^t \left\|(\pd_x^{\beta + 1} J_\ep)\underset{}{\star}(\sigma q) 
	\right\|_{L^2_{\loc}}^2 \;\d r\right)^{\theta/2}\\
 &\le  C \Ex\left( \int_s^t\left\|(\pd_x^{\beta + 1} J_\ep)\underset{}{\star}(\sigma q) 
 	\right\|_{L^\infty_{\loc}}^2 \;\d r\right)^{\theta/2}\\
& \le C_{\beta,\ep,\theta,T, \sigma} |t - s|^{\theta/2},
\end{align*}

By Kolmogorov's continuity theorem, for fixed $\ep > 0$,
we have a $C^{1/2-0}([0,T];H^\beta_\loc(\R))$ 
  representative of the martingale
\begin{equation*}
J_\ep \star\int_0^t \pd_x(\sigma q) \;\d W =
\int_0^t J_\ep \star \pd_x(\sigma q) \;\d W.
\end{equation*}
By the Sobolev embedding theorem on $\R$, 
for $\beta \ge 1$, we have spatial continuity 
to arbitrary order.

 All the other temporal integrals are integrals of 
finite variation, and hence continuous in $t$, 
with integrands that are convolutions against a fixed, smooth function, 
and hence smooth in $x$. This means that 
\begin{equation*}
q_\ep(t) - q_\ep(0) = - \int_0^t J_\ep \star \pd_x (u q) \;\d s 
	- \frac{1}{2} J_\ep \star q^2\;\d s 
		- J_\ep \star\int_0^t \pd_x(\sigma q) \circ \d W
\end{equation*}
is also pointwise continuous. 
This means there is no dissipation arising 
from the mollified equation for fixed $\ep > 0$.

Moreover, since 
\begin{equation*}
J_\ep \star\int_0^t \pd_x(\sigma q) \;\d W 
\end{equation*}
has a  $C^{1/2-0}([0,T];H^\beta_\loc(\R))$ 
continuous representative, we can write its cross-variation with $W$ as
\begin{equation*}
\bigg\LL J_\ep \star\int_0^{(\dott)} \pd_x(\sigma q) \;\d W , W\bigg\RR_t
	 =  J_\ep \star\int_0^t \pd_x(\sigma q) \;\d s.
\end{equation*}
Therefore the normal It\^o formula is sufficient to establish
 equivalence of the Stratonovich and It\^o formulations.

\end{proof}

\begin{lemma}[Mollification error bounds]\label{thm:mollifying_errors}
On the same assumptions as Lemma \ref{thm:molly}, 
the mollified equation \eqref{eq:mollif_sHS} can be re-written as
\begin{align}\label{eq:q_ep}
d q_\eps + \pd_x (u_\ep q_\eps)  \, \d t
	+  \px (\sigma q_\eps) \d W 
		- \frac12 \pd_x (\sigma \pd_x (\sigma q_\ep))
=  \frac12 q_\eps^2\, \d t +(r_\eps + \rho_\ep)\, \d t + \tilde{r}_\ep \,\d W, 
\end{align}
where the mollification error
\begin{align}
r_\eps &:=\Bigl( \px(u_\ep q_\eps)
-\pd_x\bigl(u q\bigr)\star J_\eps\Bigr)
+\frac12 \left( q^2 \star J_\eps 
-q_\eps^2\right)
\end{align}
tends to zero in $L^1( [0,T] ;L^1(\R))$ as $\eps\to 0$, $\mathbb{P}$-almost surely,
\begin{align}
 \tilde{r}_\ep &:=\Bigl(\pd_x (\sigma q_\ep)
-\pd_x\bigl(\sigma q)\star J_\eps\Bigr)\label{eq:r_ep}
\end{align}
tends to zero in $L^2( [0,T] ;L^2(\R))$ as 
$\eps\to 0$, $\mathbb{P}$-almost surely, and, 
for any $S \in C^{1,1}(\R)$ with $\sup_{r \in \R}(|S'(r)| + |S''(r)|) <\infty$, 
\begin{equation*}
\int_0^T \int_{\R} \Big(-S'(q_\ep) \rho_\ep 
	+\frac{1}{2} S''(q_\ep) \Big( (\pd_x(\sigma q_\ep))^2 
			-\big( \pd_x(\sigma q)\star J_\ep\big)^2 \Big)\Big) \;\d x\,\d t
\end{equation*}
tends to zero as $\ep \to 0$, $\mathbb{P}$-almost surely,
where
\begin{align}
\rho_\ep &:=  \frac{1}{2}\Bigl(\pd_x\bigl(\sigma \pd_x (\sigma q))\star J_\eps 
	- \pd_x (\sigma \pd_x (\sigma q_\ep)) \Bigr).
\end{align}
\end{lemma}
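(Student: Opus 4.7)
The plan is to verify \eqref{eq:q_ep} by algebraic rearrangement of \eqref{eq:mollif_sHS} from Lemma \ref{thm:molly} and then treat the three error claims separately by DiPerna--Lions-type commutator estimates, working pointwise in $\omega$ and exploiting $q(\omega)\in L^\infty_t L^2_x$, $u(\omega)\in L^\infty_{t,x}$ with $\px u = q$, all guaranteed by Definition \ref{def:weak_sol} ($u$ bounded because $q\in L^1\cap L^2$). To obtain \eqref{eq:q_ep}, add and subtract $\px(u_\ep q_\ep)$, $\tfrac12 q_\ep^2$, $\px(\sigma q_\ep)$, and $\tfrac12\px(\sigma\px(\sigma q_\ep))$ inside \eqref{eq:mollif_sHS}; the leftovers match the definitions of $r_\ep$, $\tilde r_\ep$, and $\rho_\ep$ exactly.

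For $r_\ep$, using $\px(uq) = u\px q + q^2$ (since $\px u = q$) and $\px u_\ep = q_\ep$, we split
\begin{equation*}
r_\ep = \bigl[u_\ep \px q_\ep - J_\ep\star(u\px q)\bigr] + \tfrac12\bigl[q_\ep^2 - J_\ep\star q^2\bigr].
\end{equation*}
The first bracket is the classical DiPerna--Lions transport commutator, converging to $0$ in $L^1_\loc((0,T)\times \R)$ since $u$ is bounded and $\px u = q\in L^2$; global $L^1_x$ control follows from the uniform bound $\|u_\ep\px q_\ep - J_\ep\star(u\px q)\|_{L^1}\lesssim \|\px u\|_{L^2}\|q\|_{L^2}$ together with $uq\in L^1$ giving tail decay. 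The second bracket tends to $0$ in $L^1_{t,x}$ via $\|q_\ep^2 - q^2\|_{L^1}\le \|q_\ep - q\|_{L^2}\|q_\ep+q\|_{L^2}\to 0$ and $\|(q^2)_\ep - q^2\|_{L^1}\to 0$, with dominated convergence in $t$.

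For $\tilde r_\ep = \px[\sigma q_\ep - (\sigma q)_\ep]$, the key representation
\begin{equation*}
\sigma q_\ep(x) - (\sigma q)_\ep(x) = \int J_\ep(x-y)\bigl(\sigma(x) - \sigma(y)\bigr) q(y)\,\d y
\end{equation*}
yields the pointwise bound $|\sigma q_\ep - (\sigma q)_\ep|\le \ep\|\sigma'\|_\infty\, J_\ep\star|q|$, hence this quantity is $O(\ep)$ in $L^2_x$. Decomposing $\tilde r_\ep = [\sigma' q_\ep - (\sigma'q)_\ep] + [\sigma\px q_\ep - (\sigma\px q)_\ep]$, the first term converges in $L^2_{t,x}$ by standard mollification (since $\sigma' q\in L^2$), and the second by the Lions commutator lemma ($\sigma\in W^{1,\infty}$ paired with $\px q$ distributional, $q\in L^2$); both are $O(\ep)$ in $L^2_\loc$ and globally controlled by the pointwise estimate above.

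The third assertion is the principal obstacle: it is the second-order renormalization commutator underlying the passage to the limit in the It\^o form of $S(q_\ep)$. The plan is to introduce the auxiliary integrals
\begin{align*}
\mathcal A_\ep &:= \tfrac12\int\bigl[S'(q_\ep)\px(\sigma\px(\sigma q_\ep)) + S''(q_\ep)(\px(\sigma q_\ep))^2\bigr]\,\d x,\\
\mathcal B_\ep &:= \tfrac12\int\bigl[S'(q_\ep)\,J_\ep\star\px(\sigma\px(\sigma q)) + S''(q_\ep)(J_\ep\star\px(\sigma q))^2\bigr]\,\d x,
\end{align*}
whose difference, integrated in $t$, is precisely the quantity in the claim (by definition of $\rho_\ep$). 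For $\mathcal A_\ep$, integration by parts in the first summand followed by the expansion $(\px(\sigma q_\ep))^2 = (\sigma')^2 q_\ep^2 + 2\sigma\sigma' q_\ep\px q_\ep + \sigma^2(\px q_\ep)^2$ produces a perfect cancellation of the $(\px q_\ep)^2$-terms, leaving
\begin{equation*}
\mathcal A_\ep = \tfrac12\int S''(q_\ep)(\sigma')^2 q_\ep^2\,\d x - \tfrac12\int \px(\sigma\sigma')\,G(q_\ep)\,\d x,
\end{equation*}
with $G(q):=\int_0^q rS''(r)\,\d r$; this converges to the same expression in $q$ by $q_\ep\to q$ in $L^2_{t,x}$ and dominated convergence (using $|G(q)|\le\tfrac12\|S''\|_\infty q^2$). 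For $\mathcal B_\ep$, evenness of $J_\ep$ transfers one convolution onto $S'(q_\ep)$, and a further integration by parts distributes $\px$ off $\sigma\px(\sigma q)$: the distributional $\px q$ is then paired against the smooth factor $J_\ep\star(\sigma^2 S''(q_\ep)\px q_\ep)$, producing a term of the form $-\int\px(\text{smooth})\cdot\sigma^2 q\,\d x$, which yields the same quadratic form as $\mathcal A_\ep$ modulo $O(\ep)$ remainders that are exactly the first-order commutators controlled in the $\tilde r_\ep$ step. The hard part is that $S\in C^{1,1}$ forbids use of $S'''$; this is circumvented by first regularizing $S\leftarrow S\star K_\delta$, executing the argument for smooth entropies, and then sending $\delta\to 0$ using the uniform bound $\|(S\star K_\delta)''\|_{L^\infty}\le \|S''\|_{L^\infty}$ together with dominated convergence to remove the regularization.
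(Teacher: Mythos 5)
Your derivation of \eqref{eq:q_ep} and your treatment of $r_\ep$ and $\tilde{r}_\ep$ are essentially sound and follow the same route as the paper: both reduce to the DiPerna--Lions commutator lemma (the paper cites \cite[Lemma 2.3]{MR1422251} and \cite[Lemma II.1]{MR1022305}) plus standard mollification estimates and dominated convergence in $t$. Two small cautions there: the global (rather than local) $L^1_x$ statement requires the dominated-convergence argument the paper spells out, not merely the local commutator bound; and your claim that $u$ is bounded ``because $q\in L^1\cap L^2$'' only applies to the initial datum --- Definition \ref{def:weak_sol} gives $u(t)\in \dot H^1$ and continuity, which is what the paper actually uses (via $\|u-u_\ep\|_{L^\infty(B_R)}$).

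The third assertion is where your proposal has a genuine gap. Your computation of $\mathcal{A}_\ep$ is correct --- the $(\px q_\ep)^2$ terms cancel exactly after one integration by parts. But the claim that $\mathcal{B}_\ep$ reduces to the same quadratic form ``modulo $O(\ep)$ remainders that are exactly the first-order commutators controlled in the $\tilde r_\ep$ step'' is not right. Writing $\KK f=\px(\sigma f)$ and $\JJ f=f\star J_\ep$, one has $(\KK\JJ q)^2-(\JJ\KK q)^2=-\big([\KK,\JJ]q\big)^2+2(\KK\JJ q)\,[\KK,\JJ]q$, and the cross term contains $S''(q_\ep)\,\sigma\px q_\ep\,\tilde r_\ep$: a product of a quantity vanishing in $L^2$ with one that blows up like $\ep^{-1}$ in $L^2$, so the first-order estimate alone gives you nothing. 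The paper's resolution is to integrate this piece by parts once more (writing $\sigma\px S'(q_\ep)\,\tilde r_\ep$ and moving the derivative onto $\sigma\tilde r_\ep$), whereupon its combination with $-S'(q_\ep)\rho_\ep$ produces precisely the double commutator $\big[[\KK,\JJ],\KK\big](q)$, cf.~\eqref{eq:integrand}. Showing that this double commutator tends to zero in $L^2([0,T];L^2(\R))$ is a separate, second-order commutator lemma (the analogue of \cite[Prop.~3.4]{MR3803774}, with extra terms because $\px\sigma\neq 0$), requiring the explicit kernel decomposition \eqref{eq:commute1}--\eqref{eq:commute7} and the identification of the nontrivial limits of $\mathfrak{I}_1$, $\mathfrak{I}_2$ and \eqref{eq:commute5}, which cancel only in the sum. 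This is the core of the lemma and is absent from your plan; without it the argument does not close. (Your regularization $S\mapsto S\star K_\delta$ is, incidentally, unnecessary: only $S'$ and $S''$ ever appear, and the Lipschitz chain rule suffices for $S\in C^{1,1}$.)
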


\begin{proof}
By Definition \ref{def:weak_sol}, for a weak solution 
$(u,q)$ to the stochastic Hunter--Saxton equation 
\eqref{eq:sHS1}, $q \in L^\infty( [0,T]; L^2(\R))$, 
and $u \in L^\infty([0,T];\dot{H}^1(\R))$ with 
unit $\mathbb{P}$-probability. This fact will be 
implicitly invoked along with the dominated 
convergence theorem, among other instances, 
in the following tripartite calculations.

\smallskip
{\it 1. Estimate of ${r}_\ep$.}

We can decompose $ \px(u_\ep q_\eps) -\pd_x\bigl(u q\bigr)\star J_\eps$ as follows:
\begin{align*}
 \px(u_\ep q_\eps) -&\pd_x\bigl(u q\bigr)\star J_\eps\\
& = \big(\pd_x(u_\ep q_\ep) - \pd_x( u q_\ep) \big)
	+ \big(\pd_x (u q_\ep) -\pd_x\bigl(u q\bigr)\star J_\eps\big).
\end{align*}

We estimate the above  in $L^1(\R)$ term-by-term.

Treating the $L^1_\loc(\R)$ integral as an 
$\dot{H}^1(\R)$ -- $H^{-1}(\R)$ pairing 
between $|u - u_\ep|$ and $|\pd_x q_\ep|$, we have
\begin{align*}
\|\pd_x\big( ( u - u_\ep)q_\ep\big)\|_{L^1(\R)} 
\le C \big(\|u - u_\ep\|_{L^\infty(B_R)} 
	+ \|q - q_\ep\|_{L^2(\R)}\big)\|q_\ep\|_{L^2(B_R)}
\end{align*}
as $\ep \to 0$,  by standard results on convolutions, 
$\mathbb{P}$-almost surely.

The second term tends to nought in $L^1(\R)$ for 
almost every $t \in [0,T]$, $\mathbb{P}$-almost surely 
by  \cite[Lemma 2.3]{MR1422251} (also see \cite[Lemma II.1]{MR1022305}).

The final part of $r_\ep$ is
\begin{align*}
q^2 \star J_\eps - q_\eps^2 
	= q^2 \star J_\eps - q^2 
		+ q( q -q_\ep) + (q- q_\ep) q_\ep.
\end{align*}
It tends to nought in $L^1(\R)$ for almost 
every $t \in [0,T]$, $\mathbb{P}$-almost surely by 
standard theorems on convolutions. 
By the $\mathbb{P}$-almost sure inclusion 
$q \in L^\infty([0,T] ; L^2(\R))$ for weak solutions, 
$\mathbb{P}$-almost surely the $L^1(\R)$ norm of 
the expression above can be uniformly bounded by 
$C\sup_{t \in [0,T]}\|q(t)\|^2_{L^2(\R)}$. 
This expression is of course integrable over $[0,T]$. 
Therefore, by the dominated convergence theorem, 
$\mathbb{P}$-almost surely, the $L^1([0,T];L^1(\R))$ 
convergence follows from the pointwise-in-$t$ 
convergence to zero of
\begin{equation*}
\|q^2(t) \underset{}{\star} J_\eps - q^2(t)\|_{L^1(\R)} 
	+  \|q( q -q_\ep)\|_{L^1(\R)} + \|(q- q_\ep) q_\ep\|_{L^1(\R)},
\end{equation*}
as $\ep \to 0$.

 By the estimate 
\begin{equation*}
\|\pd_x( uq_\ep)(t) - \pd_x (uq)(t) \star J_\ep \|_{L^1_\loc(\R)} 
	\le C \|q(t)\|_{L^2(\R)}^2,
\end{equation*}
also established in \cite[Lemma 2.3]{MR1422251}, 
 it again follows from the $\mathbb{P}$-almost sure
  inclusion $\|q(t)\|_{L^2(\R)} \in L^\infty([0,T])$ 
  via the dominated convergence theorem that
\begin{align}\label{eq:r_ep_estimate}
\int_0^T\|r_\ep(t)\|_{L^1(\R)} \;\d t\le  C_{T,\ep} \int_0^T \|q(t)\|_{L^2(\R)}^2 \;\d t,
\end{align}
where $C_{T,\ep}$ is a quantity independent of $t$, 
vanishing as $\ep \to 0$, and therefore $r_\ep \to 0$ in 
$L^1( [0,T]; L^1_\loc(\R))$, $\mathbb{P}$-almost surely.

\smallskip
{\it 2. Estimate of $\tilde{r}_\ep$.}

This is treated similarly to the second term above, with $\sigma$ in place of $u$.

Since $\sigma \in \dot{W}^{1,\infty}$ and 
$q \in L^\infty([0,T]; L^2(\R))$, it holds that 
$\pd_x (\sigma q) \in L^\infty( [0,T]; L^2_\loc(\R))$ 
with unit $\mathbb{P}$ probability. Therefore this 
time we have slightly higher spatial integrability, 
allowing us to conclude via \cite[Lemma 2.3]{MR1422251} that
\begin{equation*}
\tilde{r}_\ep(t) = \pd_x(\sigma q_\ep)(t) -
	 \pd_x (\sigma q)(t)\star J_\ep \to 0
\end{equation*}
in $L^2(\R)$ for almost every $t \in [0,T]$, 
$\mathbb{P}$-almost surely, as $\ep\to 0$.

Next, by an application of  the dominated convergence theorem 
in a manner previously demonstrated, 
we can conclude that
\begin{align}\label{eq:tilder_ep_estimate}
\int_0^T \|\tilde{r}_\ep(t)\|_{L^2(\R)}^2\;\d t 
	\le C_{T,\ep} \int_0^T \|q(t)\|_{L^2(\R)}^2\;\d t,
\end{align}
where $C_{T,\ep}$ depends on the continuity properties 
of $\sigma$ and its derivatives, in additional 
to $\ep$, for which we have the limit 
$C_{T,\ep} \to 0$ as $\ep \to 0$, $\mathbb{P}$-almost surely. 
Hence $\tilde{r}_\ep \to 0$ in $L^2([0,T]; L^2(\R))$, 
$\mathbb{P}$-almost surely.

\smallskip
{\it 3. Estimate of $\rho_\ep$.}

The estimate of $\rho_\ep$ takes inspiration 
from the proof of \cite[Prop.~3.4]{MR3803774}. 
However, whereas they considered the 
commutator between the operators 
$\tilde{\KK} f := \sigma \pd_x f$ and 
${\JJ} f := f \star J_\ep$, we shall 
have to consider the analogous question 
for $\KK f := \pd_x( \sigma f)$ and $\JJ$. 

Recall that here, we seek not to show 
that $\rho_\ep$ vanishes but that the 
following quantity does:
\begin{align*}
\int_0^T \int_{\R} \Big(-S'(q_\ep) \rho_\ep 
	+\frac{1}{2} S''(q_\ep) \Big( (\pd_x(\sigma q_\ep))^2 
			-\big( \pd_x(\sigma q)\star J_\ep\big)^2 \Big)\Big) \;\d x\,\d t.
\end{align*}

We can write $\rho_\ep$ as
\begin{align}
\rho_\ep &:=\frac{1}{2}\Bigl(\pd_x\bigl(\sigma \pd_x (\sigma q))\star J_\eps 
	- \pd_x (\sigma \pd_x (\sigma q_\ep)) \Bigr)\notag\\
	& \;=\frac{1}{2} \big(\JJ \KK \KK q - \KK \KK \JJ q\big) \notag\\
	&\; = \frac{1}{2} \big(\JJ \KK \KK q - \KK \JJ \KK q + \KK \JJ \KK q - \KK \KK \JJ q\big)\notag\\
	&\; = \frac{1}{2} \big(\big[\JJ,\KK\big] (\KK q)+ \KK \big[\JJ,\KK\big](q)\big)	\label{eq:rho_ep_1},
\end{align}
where
$$
\big[\JJ,\KK\big](q) = \JJ \KK q - \KK \JJ q.
$$

Similarly, we can write the 
remaining part of the integrand as 
\begin{equation}\label{eq:rho_ep_extra}
 (\pd_x(\sigma q_\ep))^2 
			-\big( \pd_x(\sigma q)\star J_\ep\big)^2  
			 = \big( (\KK \JJ q )^2 - (\JJ \KK q)^2\big).
\end{equation}

Therefore, following the calculations in \cite[p.~655]{MR3803774}, we find that
\begin{align*}
 \frac{1}{2} S''(q_\ep) \cdot\eqref{eq:rho_ep_extra}
&= \frac{1}{2} S''(q_\ep)   \big( \KK \JJ q  - \JJ \KK q\big)
	\big( \KK \JJ q  + \JJ \KK q\big)\\
&= -\frac{1}{2} S''(q_\ep) \big(\big[\KK,\JJ\big](q)\big)^2 
	+  S''(q_\ep) (\KK \JJ q) \big[\KK,\JJ\big](q) \\
&= -\frac{1}{2} S''(q_\ep) \big(\big[\KK,\JJ\big](q)\big)^2 
	+  S''(q_\ep) \pd_x \sigma q_\ep \big[\KK,\JJ\big](q)\\  
		&\qquad+ \sigma \pd_x (S'(q_\ep)) \big[\KK,\JJ\big](q)\\
&= -\frac{1}{2} S''(q_\ep) \big(\big[\KK,\JJ\big](q)\big)^2 
	+  S''(q_\ep) \pd_x \sigma q_\ep \big[\KK,\JJ\big](q) \\
		&\quad + \pd_x \big(\sigma S'(q_\ep) \big[\KK,\JJ\big](q)\big) 
			- S'(q_\ep) \pd_x \big(\sigma  \big[\KK,\JJ\big](q)\big)\\
&= -\frac{1}{2} S''(q_\ep) \big(\big[\KK,\JJ\big](q)\big)^2 
	+  S''(q_\ep) \pd_x \sigma q_\ep \big[\KK,\JJ\big](q) \\
		&\quad + \pd_x \big(\sigma S'(q_\ep) \big[\KK,\JJ\big](q)\big) 
			- S'(q_\ep) \KK \big[\KK,\JJ\big](q),
\end{align*}
by invoking the definition of $\KK$.

Adding this to \eqref{eq:rho_ep_1}, we find that
\begin{align}
-S'(&q_\ep) \cdot \eqref{eq:rho_ep_1} 
	+ \frac{1}{2} S''(q_\ep) \cdot \eqref{eq:rho_ep_extra} \notag\\
&= -\frac{1}{2} S''(q_\ep) \big([\KK, \JJ](q)\big)^2 
	+  S''(q_\ep) \pd_x \sigma q_\ep \big[\KK,\JJ\big](q)  
		+ \pd_x \big(\sigma S'(q_\ep) \big[\KK,\JJ\big](q)\big) \notag\\
& \quad - S'(q_\ep) \KK \big[\KK,\JJ\big](q) 
	- \frac{1}{2} S'(q_\ep)\big([\JJ, \KK] (\KK q)
		+ \KK \big[\JJ,\KK\big] (q)\big)	\notag\\
& = -\frac{1}{2} S''(q_\ep) \big([\KK, \JJ](q)\big)^2 
	+  S''(q_\ep) \pd_x \sigma q_\ep \big[\KK,\JJ\big](q) \notag\\
		&\quad + \pd_x \big(\sigma S'(q_\ep) \big[\KK,\JJ\big](q)\big) 
			+ \frac{1}{2}  S'(q_\ep)\big([\KK, \JJ] (\KK q) - \KK [\KK ,\JJ] (q)\big)\notag\\
& = -\frac{1}{2} S''(q_\ep) \big([\KK, \JJ](q)\big)^2 
	+  S''(q_\ep) \pd_x \sigma q_\ep \big[\KK,\JJ\big](q) \notag\\
		&\quad + \pd_x \big(\sigma S'(q_\ep) \big[\KK,\JJ\big](q)\big)  
			+ \frac{1}{2}  S'(q_\ep)\Big[\big[\KK,\JJ\big],\KK\Big](q). \label{eq:integrand}
\end{align}
We have already established that 
$\big[\KK , \JJ](q) = \tilde{r}_\ep\to 0$ in
$L^2([0,T]; L^2(\R))$ as $\ep\to0$. 

Therefore, we focus on the double commutator, which, for clarity, is
\begin{align}
\Big[\big[\KK,\JJ\big],\KK\Big](q) &=  
	\big[\KK,\JJ\big](\KK q) - \KK\big[\KK,\JJ\big](q)\notag\\
& = 2 \KK \JJ \KK q - \JJ \KK \KK q - \KK \KK \JJ q.\label{eq:commutators}
\end{align}

Term-by-term in this commutator we have
\begin{align}
2 \KK \JJ \KK q (x)&= 
	2  \int_\R \pd_{xx}^2 J_\ep(x - y) \sigma(x)\sigma(y) q(y) \;\d y \label{eq:commute1}\\
& \quad + 2 \int_\R \pd_x J_\ep(x - y) \pd_x \sigma(x) \sigma(y) q(y) \;\d y,\label{eq:commute2}\\[2mm]
\JJ \KK \KK q (x)&= 
	\int_\R \pd_{xx}^2 J_\ep(x - y) \sigma^2(y) q(y) \;\d y\label{eq:commute3}\\
& \quad - \int_\R \pd_x J_\ep(x - y) \sigma(y)\pd_y \sigma(y) q(y)\;\d y ,\label{eq:commute4}\\
\intertext{and}
 \KK \KK \JJ q (x)&=
 	 \int_\R  J_\ep(x - y) \pd_x \big(\sigma(x) \pd_x \sigma(x)\big) q(y) \;\d y\label{eq:commute5}\\
& \quad +  3\int_\R \pd_x J_\ep(x - y) \sigma(x)\pd_x \sigma(x) q(y)\;\d y \label{eq:commute6}\\
& \quad + \int_\R \pd_{xx}^2 J_\ep(x - y) \sigma^2(x)q(y)\;\d y \label{eq:commute7}.
\end{align}

There are more terms here than in 
\cite{MR3803774} because we do not 
necessarily have the divergence-free 
condition $\pd_x \sigma = 0$.

Now we can estimate \eqref{eq:commute1} to
 \eqref{eq:commute7} above by 
considering the sums
\begin{align*}
\mathfrak{I}_1 & := \eqref{eq:commute2} - \eqref{eq:commute4} -\eqref{eq:commute6},\\
\mathfrak{I}_2 & := \eqref{eq:commute1} - \eqref{eq:commute3} - \eqref{eq:commute7},
\end{align*}
and finally the stand-alone
 integral \eqref{eq:commute5},
where from \eqref{eq:commutators}, 
we see that
\begin{align}\label{eq:commutator_decomposition}
\Big[\big[\KK,\JJ\big],\KK\Big](q) 
	= \mathfrak{I}_1 + \mathfrak{I}_2 - \eqref{eq:commute5}.
\end{align}

We shall use \cite[Lemma II.1]{MR1022305}
 to establish that this sum above tends to
 nought in an appropriate topology. 
Estimating these integrals separately, we have
\begin{align*}
\|\mathfrak{I}_1\|_{L^2(\R)}
	&= \bigg\|\int_\R \pd_x J_\ep(\dott - y) \Big(2 \sigma(y) \pd_x \sigma(\dott) 
		+ \sigma(y) \pd_y \sigma(y) 
			- 3 \sigma(\dott) \pd_x \sigma(\dott)\Big) q(y) \;\d y\bigg\|_{L^2(\R)}\\
 & =\bigg\| \int_\R \pd_x J_\ep(\dott - y) \\
 &\quad\times
 	\Big(2 \big(\sigma(y) - \sigma(\dott)\big) \pd_x \sigma(\dott) 
 		+ \big(\sigma(y) \pd_y \sigma(y) 
 			- \sigma(\dott) \pd_x \sigma(\dott)\big)\Big) q(y) \;\d y\bigg\|_{L^2(\R)}\\
& \le 	\bigg\| \int_\R |\pd_x J_\ep(\dott - y)| \\
 &\quad\times
 	\Big(2 |\sigma(y) - \sigma(\dott)|\,  |\pd_x \sigma(\dott)| 
 		+ |\sigma(y) \pd_y \sigma(y) 
 			- \sigma(\dott) \pd_x \sigma(\dott)|\Big) |q(y)| \;\d y\bigg\|_{L^2(\R)}\\					
 & \le C \bigg\| \int_\R  \frac1\ep |\dott - y| J_\ep(\dott - y) \\
 &\quad\times
 	\bigg(2\bigg|\frac{\sigma(y) - \sigma(\dott)}{y - \dott } \pd_x \sigma(\dott)\bigg| 
 		+ \bigg|\frac{\sigma(y) \pd_y \sigma(y) - \sigma(\dott) \pd_x \sigma(\dott)}{y - \dott }
 			\bigg|\bigg) |q(y)| \;\d y\bigg\|_{L^2(\R)}\\
 & \le C\Big(\|\pd_x\sigma\|_{L^\infty(\R)}^2 
 + \|\sigma \pd_x \sigma\|_{L^\infty(\R)} \Big)
 \bigg\| \int_\R  \frac1\ep |\dott - y| J_\ep(\dott - y)|q(y)| \;\d y\bigg\|_{L^2(\R)} \\	
 & \le C\Big(\|\pd_x\sigma\|_{L^\infty(\R)}^2 
 + \|\sigma \pd_x \sigma\|_{L^\infty(\R)} \Big) \big\| \frac1\ep |\dott| J_\ep(\dott) \big\|_{L^1(\R)}
 \big\| q\big\|_{L^2(\R)} 		\\	 
 & \le C \big(\|\pd_x\sigma\|_{L^\infty(\R)}^2  + 
 	\|\sigma \pd_x \sigma\|_{L^\infty(\R)}\big)\|q\|_{L^2(\R)}.
\end{align*}
Here we used that   $|\pd_x J_\ep| \lesssim \ep^{-1}J_\ep$ and Young's inequality for convolutions. 
Similarly  we find that
\begin{align*}
\|\mathfrak{I}_2\|_{L^2(\R)} 
	&= \bigg\|\int_\R \pd_{xx}^2 J_\ep(\dott - y)
		 \big(2 \sigma(\dott) \sigma(y) - \sigma^2(\dott)
		 	 - \sigma^2(y)\big) q(y) \;\d y\|_{L^2(\R)}\\
 & = \bigg\|\int_\R \pd_{xx}^2 J_\ep(\dott - y) 
 	\big(\sigma(\dott) - \sigma(y)\big)^2 q(y) \;\d y\bigg\|_{L^2(\R)}\\
 & \le \bigg\|\int_\R |\pd_{xx}^2 J_\ep(\dott - y)|
		 \big|2 \sigma(\dott) \sigma(y) - \sigma^2(\dott)
		 	 - \sigma^2(y)\big|\, |q(y)| \;\d y\|_{L^2(\R)}\\	
 & \le C \bigg\|\int_\R \frac1{\ep^2} (\dott - y)^2J_\ep(\dott - y)
 	 \bigg|\frac{\sigma(\dott) - \sigma(y)}{y - \dott}
 	 	\bigg|^2 |q(y)| \;\d y\bigg\|_{L^2(\R)}\\	
 & \le C	\|\pd_x \sigma\|_{L^\infty(\R)}^2		
 \bigg\|\int_\R \frac1{\ep^2} (\dott - y)^2J_\ep(\dott - y) |q(y)| \;\d y\bigg\|_{L^2(\R)}\\	
  & \le C\|\pd_x \sigma\|_{L^\infty(\R)}^2	
  \big\| \frac1{\ep^2} (\dott)^2J_\ep(\dott)\big\|_{L^1(\R)} \big\| q\big\|_{L^2(\R)} \\
  & \le C \|\pd_x \sigma\|_{L^\infty(\R)}^2\|q\|_{L^2(\R)}.
\end{align*}
We also have
$$
\|\eqref{eq:commute5}\|_{L^2(\R)}
	 \le C \|J_\ep\|_{L^1(\R)}
	 	\|\pd_x(\sigma\pd_x \sigma)\|_{L^\infty(\R)}\|q(t)\|_{L^2(\R)}.
$$

Now for smooth functions $q$,
\begin{align*}
\mathfrak{I}_2 & =\int_\R \pd_{xx}^2 J_\ep(x - y)
					 \big(2 \sigma(x) \sigma(y) - \sigma^2(x)
		 			 - \sigma^2(y)\big) q(y) \;\d y\\
		 	 & =- 2\int_\R \pd_{xx}^2 J_\ep(x - y) \frac{(x - y)^2}{2}
					 \left(\frac{\sigma(y) - \sigma(x)}{(y - x)} \right)^2q(y) \;\d y\\
 			 & = - 2 (\pd_x \sigma)^2 q(x) \int_\R \frac{z^2}{2}\pd_{zz}^2 J_\ep(z) \; \d z + o(\ep).
\end{align*}
A similar calculation can be done for $\mathfrak{I}_1$, 
where there is only one derivative on the mollifier, 
and which can be found directly in the proof of 
\cite[Lemma II.1]{MR1022305}.

The limit of \eqref{eq:commute5} as $\ep \to 0$ for smooth $q$ is standard.

Reasoning then as in the proof of 
\cite[Lemma II.1]{MR1022305}, we find that
$$
\mathfrak{I}_1 \to  \pd_x (\sigma \pd_x \sigma)q+ 2 (\pd_x \sigma)^2 q, 
	\qquad \mathfrak{I}_2  \to - 2 (\pd_x \sigma)^2 q, 
		\qquad -\eqref{eq:commute5}\to - \pd_x(\sigma \pd_x \sigma) q,
$$
in $L^2(\R)$ almost everywhere in time,
 $\mathbb{P}$-almost surely as $\ep \to 0$.  Adding these together, with 
reference to \eqref{eq:commutator_decomposition},
 we can conclude that   $\Big[\big[\KK,\JJ\big],\KK\Big](q) 
\to 0$ in 
$L^2([0,T]; L^2(\R))$
 $\mathbb{P}$-almost surely as $\ep \to 0$. 

Recall \eqref{eq:integrand}. We have the $\mathbb{P}$-almost sure bound,
\begin{align}
&\bigg|\int_0^T \int_{\R} \Big[-S'(q_\ep) \rho_\ep 
	+\frac{1}{2} S''(q_\ep) \Big( (\pd_x(\sigma q_\ep))^2 
		-\big( \pd_x(\sigma q)\star J_\ep\big)^2 \Big)\Big] \;\d x\,\d t\bigg| \notag\\
&\qquad\qquad= 		\bigg|\int_0^T \int_{\R} \Big(-S'(q_\ep) \cdot \eqref{eq:rho_ep_1} 
	+ \frac{1}{2} S''(q_\ep) \cdot \eqref{eq:rho_ep_extra} \Big) \;\d x\,\d t\bigg|  \notag\\
&\qquad\qquad			 \le C_{T, \sigma, \ep} \|q\|_{L^2([0,T]\times\R)}
\label{eq:rho_ep_estimate},
\end{align}
where $C_{T,\sigma, \ep} \to 0$ as $\ep \to 0$.

\end{proof}

Next we prove Proposition \ref{thm:aprioriestimates}:

\begin{proof}[Proof of  Prop. \ref{thm:aprioriestimates}.]

We carry out this proof in three steps:
\begin{itemize}
\item[(1)] We first renormalise the mollified equation, 
	finding an equation for $S(q_\ep)$ with $S\in C^{1,1}$.
\item[(2)] Using the renormalisation in (1) 
	prove the explicit $L^2$-bound \eqref{eq:q_L2bound}.
\item[(3)] Exploiting the explicit $L^2$-bound, 
	we demonstrate the $L^{2 + \alpha}$-bound \eqref{eq:q_Lalphabound}.
\end{itemize}

\smallskip
{\it 1. Renormalisation.}

Since convolution commutes with differentiation in $x$, 
\begin{equation*}
\pd_x u_\ep = q_\ep.
\end{equation*}

For any non-negative $S \in C^2(\R)$, 
we can use It\^{o}'s formula to write 
\begin{align*}
0 &= \d q_\ep + \big(\pd_x(u_\ep q_\ep) 
	- \frac{1}{2} q_\ep^2\big) \;\d t 
		+ \pd_x(\sigma q_\ep)\;\d W\\
 & \quad - \frac{1}{2} \pd_x\big(\sigma \pd_x (\sigma q_\ep)\big)\;\d t
 	- (r_\ep + \rho_\ep)\;\d t - \tilde{r}_\ep \;\d W.
\end{align*}
Furthermore, 
\begin{align*}
0 &= \d S(q_\ep) + S'(q_\ep) \bigg(  \pd_x(u_\ep q_\ep) 
	- \frac{1}{2} q_\ep^2 -
		 \frac{1}{2} \pd_x(\sigma \pd_x (\sigma q_\ep))\bigg)\;\d t 
		 	+  S'(q_\ep) \pd_x(\sigma q_\ep)\;\d W\\
&\quad - \frac{1}{2} S''(q_\ep) \big( \pd_x(\sigma q_\ep)
	 - \tilde{r}_\ep \big)^2\;\d t 
	 	- S'(q_\ep) ( r_\ep \;\d t+ \rho_\ep  \;\d t + \tilde{r}_\ep \;\d W )\\
&= \mathfrak{L}- \frac{1}{2} S''(q_\ep) \big( \pd_x(\sigma q_\ep)
	 - \tilde{r}_\ep \big)^2\;\d t 
	 	- S'(q_\ep) ( r_\ep \;\d t+ \rho_\ep  \;\d t + \tilde{r}_\ep \;\d W ).
\end{align*}
For the first term $\mathfrak{L}$ we find
\begin{align*}
\mathfrak{L}&=\d S(q_\ep)  + S'(q_\ep)\big(  \px (u_\ep q_\ep) 
	 - \frac{1}{2}q_\ep^2\big) \,\d t 
	 	+ S'(q_\ep) \px (\sigma q_\eps) \,\d W 
		  -  \frac12 S'(q_\ep) \pd_x (\sigma \pd_x (\sigma q_\ep))\,\d t\\
&=  \d S(q_\ep)  + 
	  \big(\pd_x (u_\ep S(q_\ep)) - q_\ep S(q_\ep) 
	  	+ \frac{1}{2}S'(q_\ep)q_\ep^2\big) \,\d t
		-  \frac12 S'(q_\ep) \pd_x (\sigma \pd_x (\sigma q_\ep))\,\d t  \\
&\quad	 + \pd_x(\sigma S(q_\ep))\,\d W  + \pd_x \sigma\big (q_\ep S'(q_\ep) - S(q_\ep)\big)\,\d W,
\end{align*}
and the last term on the first  line can be further expanded in order 
to maximise the number of terms in divergence form:
\begin{align*}
\mathfrak{L}&=  \d S(q_\ep)  +   \big(\pd_x (u_\ep S(q_\ep) )
	- q_\ep S(q_\ep) + \frac{1}{2}S'(q_\ep)q_\ep^2\big) \,\d t\\
	&\quad+ \pd_x(\sigma S(q_\ep))\,\d W 
		+ \pd_x \sigma\big (q_\ep S'(q_\ep) - S(q_\ep)\big)\,\d W \\
		& \quad  -  \frac12  \pd^2_{xx} (S(q_\ep) \sigma^2)\,\d t 
		+ \frac{1}{4} \pd_x (S(q_\ep) \pd_x \sigma^2) \,\d t
			+ \frac{1}{4} \pd_x \big( (S(q_\ep)  - S'(q_\ep) q_\ep)\pd_x \sigma^2\big)\,\d t\\
			&\quad+ \frac{1}{2} S''(q_\ep) ( \sigma \pd_x q_\ep)^2 \,\d t
			+   \frac{1}{4} S''(q_\ep) \pd_x\sigma^2 q_\ep \pd_x q_\ep\,\d t.
\end{align*}
Re-arranging the terms, one arrives at:
\begin{align*}
\mathfrak{L}&= \d S(q_\ep) +  \pd_x \Big(u_\ep S(q_\ep) 
	+ \frac{1}{4}\pd_x \sigma^2 S(q_\ep)\Big) \,\d t 
		+ \pd_x(\sigma S(q_\ep))\,\d W \\
			& \quad+ \pd_x \sigma \big(q_\ep S'(q_\ep) - S(q_\ep)\big)\,\d W 
			- \frac{1}{2} \pd_{xx}^2\big(\sigma^2 S(q_\ep)\big) \,\d t
				- \Big( q_\ep S(q_\ep) - \frac{1}{2}S'(q_\ep)q_\ep^2\Big) \,\d t \\
					&\quad- \frac{1}{4} \pd_{xx}^2 \sigma^2 \big(q_\ep S'(q_\ep) - S(q_\ep)\big) \,\d t
					- \frac{1}{2} S''(q_\ep) \big((\pd_x(\sigma q_\ep))^2\\
						&\quad - (\sigma \pd_x q_\ep)^2\big) \,\d t 
						 	+  \frac{1}{2} S''(q_\ep) (\pd_x(\sigma q_\ep))^2 \;\d t.
\end{align*}
Introducing $G_S(v) = vS'(v) - S(v)$, 
we can simplify the above as:
\begin{align*}
\mathfrak{L}&=\d S(q_\ep) + \pd_x\bigg(u_\ep S(q_\ep) +
	 \frac{1}{4}\pd_x \sigma^2 S(q_\ep) 
	- \frac{1}{2} \pd_x \sigma^2 G_S(q_\ep)\bigg) \,\d t 
		- \big(q_\ep S(q_\ep) - \frac{1}{2}q^2_\ep S'(q_\ep)\big)\,\d t\\
		&\quad - \bigg[\frac{1}{2}\pd_{xx}^2 \big(\sigma^2 S(q_\ep)\big) 
			+ \frac{1}{2}q_\ep G_S'(q_\ep) (\pd_x \sigma)^2 
				- \frac{1}{4} \pd_{xx}^2 \sigma^2 G_S(q_\ep)\bigg]\,\d t\\
				&\quad  + \pd_x (\sigma S(q_\ep))\,\d W
					+ \pd_x \sigma G_S(q_\ep) \,\d W 
						+  \frac{1}{2} S''(q_\ep) (\pd_x(\sigma q_\ep))^2 \;\d t.
\end{align*}
There is no pathwise energy estimate in the 
stochastic setting because of the term 
$\pd_x \sigma G_S(q_\ep) \,\d W$, which is not 
an exact spatial derivative.

Putting back in $r_\ep$, $\rho_\ep$, and $\tilde{r}_\ep$,
 we arrive at
\begin{equation}\label{eq:entropy}
\begin{aligned}
0&= \d S(q_\ep) + \pd_x\bigg(u_\ep S(q_\ep) 
	+ \frac{1}{4}\pd_x \sigma^2 S(q_\ep) 
		- \frac{1}{2} \pd_x \sigma^2 G_S(q_\ep)\bigg) \,\d t 
			- \big(q_\ep S(q_\ep) - \frac{1}{2}q^2_\ep S'(q_\ep)\big)\,\d t\\
			& \quad- \bigg[\frac{1}{2}\pd_{xx}^2 \big(\sigma^2 S(q_\ep)\big) 
				+ \frac{1}{2}q_\ep G_S'(q_\ep) (\pd_x \sigma)^2 
					- \frac{1}{4} \pd_{xx}^2 \sigma^2 G_S(q_\ep)\bigg]\,\d t\\
					& \quad + \pd_x (\sigma S(q_\ep))\,\d W
						+ \pd_x \sigma G_S(q_\ep) \,\d W \\
					& \quad  + \frac{1}{2} S''(q_\ep) \Big( (\pd_x(\sigma q_\ep))^2 
						-\big( \pd_x(\sigma q)\star J_\ep\big)^2 \Big)\; \d t
							- S'(q_\ep) ( r_\ep \;\d t+ \rho_\ep  \;\d t + \tilde{r}_\ep \;\d W ),
\end{aligned}
\end{equation}
where we have used
\begin{equation*}
 \pd_x(\sigma q)\star J_\ep 
 	= \pd_x (\sigma q_\ep) - \tilde{r}_\ep.
\end{equation*}

This puts most terms of the equation in divergence 
form and also sets up the mollification term ready 
for  an application of Lemma \ref{thm:mollifying_errors}.

\smallskip
{\it 2. The $L^{2}$-bound.} 

The $L^2$-bound follows directly from the requirement 
\eqref{eq:sol_cons} of Definition \ref{def:cons_sols} 
for conservative weak solutions.

We show that the weak energy balance 
\eqref{eq:sol_dissp} holds for weak dissipative solutions, 
from which shall follow the $L^2$-bound \eqref{eq:q_L2bound}.

 We can estimate $\|q(t)\|_{L^{2}_x}^{2}$ using the entropies:
\begin{equation*}
S(v) = S_\ell(v) := \begin{cases} v^2, & |v| \le \ell,  \\
2\ell |v| -  \ell^2,  & |v| > \ell. 
\end{cases}
\end{equation*}
This ensures that $S_\ell$ has bounded first and 
second derivatives for $\ell < \infty$, and allows 
us to exploit the convergences in $\ep \to 0$ of 
$r_\ep$, $\rho_\ep$, and $\tilde{r}_\ep$ proven 
in Lemma \ref{thm:mollifying_errors}. In particular,
\begin{equation*}
S_\ell'(v) = \begin{cases}2v, & |v|\le \ell, \\ 
2 \ell \,\sgn(v), & |v| > \ell, 
\end{cases}
\qquad \quad S''(v) = 2\mathds{1}_{\{|v| \le \ell\}}.
\end{equation*}
Furthermore, we have
\begin{equation*}
G_S(v) = vS_\ell'(v) - S_\ell(v) = v^2 \wedge \ell^2, 
\qquad G_S'(v) = vS_\ell''(v) = 2v \mathds{1}_{\{|v| \le \ell\}}.
\end{equation*}
and
\begin{equation*}
q_\ep S_\ell(q_\ep)- \frac{1}{2}q^2_\ep S'(q_\ep) 
	= \ell \,q_\ep(|q_\ep| - \ell) \mathds{1}_{\{|q_\ep| > \ell\}}.
\end{equation*}

Inserting these into \eqref{eq:entropy} and 
integrating in $x$ and $s$, we are left with
\begin{equation}\label{eq:pre_L2bound}
\begin{aligned}
0 &= \int_\R S(q_\ep)\;\d x\bigg|_0^t   
	- \int_0^t \int_\R  \ell \,q_\ep(|q_\ep| - \ell) \mathds{1}_{\{|q_\ep| > \ell\}} \;\d x\,\d s 
		+ \int_0^t \int_\R \pd_x \sigma G_S(q_\ep) \;\d x\,\d W\\
		& \quad- \int_0^t \int_\R q_\ep^2 \big((\pd_x \sigma)^2 
			- \frac{1}{4} \pd_{xx}^2 \sigma^2\big)\mathds{1}_{\{|q_\ep| \le \ell\}}\;\d x\,\d s 
				+  \frac{1}{4}\int_0^t \int_\R\ell^2\pd_{xx}^2 \sigma^2 \mathds{1}_{\{|q_\ep| > \ell\}} \;\d x\,\d s \\
				& \quad + \int_0^t \int _\R \mathds{1}_{\{|q_\ep| \le \ell\}}\Big( (\pd_x(\sigma q_\ep))^2 
						-\big( \pd_x(\sigma q)\star J_\ep\big)^2 \Big)\;\d x \,\d s 
							-\int_0^ t \int_\R S_\ell'(q_\ep)  \rho_\ep  \;\d x \,\d s \\
							& \quad - \int_\R \int_0^ tS_\ell'(q_\ep)
								 ( r_\ep  \;\d t+ \tilde{r}_\ep \;\d W )\;\d x.
\end{aligned}
\end{equation}

We provide further bounds for the terms
\begin{align}\label{eq:special1}
\int_0^t \int_\R  \ell \,q_\ep(|q_\ep| - \ell) \mathds{1}_{\{|q_\ep| > \ell\}} \;\d x\,\d s,
 \end{align}
and
\begin{align}\label{eq:special2}
  \frac{1}{4}\int_0^t \int_\R\ell^2\pd_{xx}^2 \sigma^2 \mathds{1}_{\{|q_\ep| > \ell\}} \;\d x\,\d s,
\end{align}
which cannot  immediately be dealt with by Gronwall's inequality.

By splitting $q_\ep$ into positive and negative 
parts of essentially disjoint support, i.e., 
$q_\ep = q^+_\ep + q^-_\ep$ so that 
$q^-_\ep \le 0 \le q^+_\ep$, we see that
\begin{align*}
   \eqref{eq:special1} 
   	&=  \int_0^t \int_\R \ell (q^+_\ep + q^-_\ep) (|q_\ep| - \ell)
   			\mathds{1}_{\{|q_\ep| > \ell\}}\;\d x\,\d s 
   		\le \int_0^t \int_\R \ell q^+_\ep (|q_\ep| - \ell)
   			 \mathds{1}_{\{|q_\ep| > \ell\}}\;\d x\,\d s.
\end{align*}

We shall be taking the limits in the order 
$\ep \to 0$ first and then $\ell \to \infty$ later. 
Using the upper-boundedness of weak dissipative 
equations mandated in Definition \ref{def:weak_diss_sols}, 
we can can take $\ep \to 0$ and conclude that there 
is always a sufficiently large $\ell$ beyond which the 
term $(|q_\ep| - \ell) \mathds{1}_{\{|q_\ep| > \ell\}}$ 
simply vanishes.

Secondly, by Markov's inequality,
\begin{equation*}
 |\eqref{eq:special2}| \le 
 	\frac{\|\pd_{xx}^2 \sigma ^2\|_{L^\infty}}{4}
 		 \int_0^t  \int_\R |q_\ep|^2\mathds{1}_{\{|q_\ep| > \ell\}}\;\d x\,\d s.
\end{equation*}
Finally, by Lemma \ref{thm:mollifying_errors}, 
equations \eqref{eq:r_ep_estimate}, 
\eqref{eq:tilder_ep_estimate}, and \eqref{eq:rho_ep_estimate}, 
the last two lines of \eqref{eq:pre_L2bound} are 
bounded by $C_{\ep, T}$, where $C_{\ep,T} \to 0$ as $\ep \to 0$.
This means all terms can either be handled by 
Gronwall's inequality or are bounded. 
First integrating against $\d \mathbb{P}$, 
we then take the limits $\ep \to 0$ and 
$\ell \to \infty$ and use Fatou's lemma in order 
to get the limit energy inequality
\begin{align}
\Ex \int_\R |q|^2 \;\d x\bigg|_0^t  
	&\le  \Ex \int_0^t  \int_\R q^2 \Big( (\pd_x \sigma)^2
		- \frac{1}{4} \pd_{xx}^2\sigma^2 \Big) \;\d x\,\d s .
\end{align}
for almost every $t \in [0,T]$.

\smallskip
{\it 3. The $L^{2+\alpha}$-bound.}

For the $L^{2 + \alpha}$-bound, with $\alpha\in[0,1)$, we use the 
entropies $S_\ell$ defined by
\begin{equation*}
S(v) = S_\ell(v) :=  \begin{cases}  
\frac12\alpha\ell^{2 - \alpha} v^3 +\frac12(2 - \alpha)\ell^{-\alpha} v, 
& |v| \le \ell^{-1}, \\  
v|v|^\alpha, &    \ell^{-1} < |v| \le \ell,\\ 
(1 + \alpha) v\ell^\alpha - \alpha \ell^{1 + \alpha}\sgn(v), & |v| > \ell. 
\end{cases}
\end{equation*}

In this way,
\begin{align*}
S_\ell'(v) &= \begin{cases} \frac32\alpha \ell^{2 - \alpha}v^2 + \frac12(2 - \alpha) \ell^{-\alpha}, 
& |v| \le \ell^{-1},\\  
(1 + \alpha) |v|^{\alpha}, &  \ell^{-1} < |v| \le \ell,\\ 
(1 + \alpha) \ell^\alpha, & |v| > \ell,  \end{cases} \\
\intertext{and} 
S''_\ell(v) &=  \begin{cases} 3\alpha \ell^{2 - \alpha} v, & |v| \le \ell^{-1},\\ 
\alpha (1 + \alpha) |v|^{\alpha - 1} \sgn(v), &\ell^{-1} < |v| \le \ell,\\ 
0,  &|v| > \ell.\end{cases}
\end{align*}
The values for $S_\ell(v)$ in the interval 
$[-\ell^{-1},\ell^{-1}]$ are the Hermite interpolation 
polynomial, matching the values and first derivatives 
of $v|v|^\alpha$ at the end-points 
$v = \pm \ell^{-1}$, so that $S_\ell'$ and $S_\ell''$ 
stay bounded for fixed $\ell$, as we require them to do.

Using these to compute $G_S(v):= vS_\ell'(v) - S_\ell(v)$
 and its derivatives, we find
\begin{equation*}
G_S(v) =  \begin{cases} \alpha \ell^{2 - \alpha} v^3, & |v| \le \ell^{-1},\\
 \alpha v |v|^{\alpha }, & \ell^{-1} < |v| \le \ell, \\ 
 \alpha \ell^{1 + \alpha}\sgn(v), & |v| > \ell, \end{cases}
\end{equation*}
and
\begin{align}\label{eq:dGS_estim}
 G'_S(v) := vS''_\ell(v) &= \begin{cases} 3\alpha \ell^{2 - \alpha} v^2, & |v| \le \ell^{-1},\\
  \alpha (1 + \alpha) |v|^{\alpha}, & \ell^{-1} < |v| \le \ell,\\ 
  0, & |v| > \ell, \end{cases} \\
  & \le 3\alpha |v|^\alpha. \notag
\end{align}

Moreover,
\begin{equation*}
v S_\ell(v) - \frac{1}{2}v^2 S_\ell'(v)
	=  \begin{cases}
	 -\frac14\alpha \ell^{2 - \alpha} {v}^4 + \frac14(2 - \alpha) \ell^{-\alpha} v^2, & |v| \le \ell^{-1},\\  
	\frac12(1 - \alpha) |v|^{\alpha+2}, & \ell^{-1}< |v| \le  \ell,\\ 
	\frac12(1 + \alpha) v^2 \ell^\alpha - \alpha \ell^{1 + \alpha}|v|, & |v| > \ell. \end{cases}
\end{equation*}
Clearly, $S_\ell(v)\to v|v|^\alpha$ and $G_S(v)\to  \alpha v |v|^\alpha$ as $\ell\to \infty$.

We can re-arrange \eqref{eq:entropy}, 
and integrate in $x$ and $s$ to get
\begin{align*}
 &\Ex \int_0^t \int_\R \big(q_\ep S(q_\ep) 
 	- \frac{1}{2}q^2_\ep S'(q_\ep)\big)\;\d x\,\d s\\
&\qquad \le \Ex \int_\R S(q_\ep)\;\d x\bigg|_0^t  
	- \Ex \int_0^t \int_\R\bigg[ \frac{1}{2}q_\ep G_S'(q_\ep) (\pd_x \sigma)^2 
		- \frac{1}{4} \pd_{xx}^2 \sigma^2 G_S(q_\ep)\bigg]\,\d x\,\d s\\
		 & \qquad \quad + \frac{1}{2} \int_0^t \int_\R S''(q_\ep)\Big( (\pd_x(\sigma q_\ep))^2 
 			-\big( \pd_x(\sigma q)\star J_\ep\big)^2 \Big)\;\d x \,\d s 
 				- \int_0^t\int_\R  S'(q_\ep)   \rho_\ep  \;\d x\,\d s \\
 				& \qquad \quad -  \int_\R \int_0^t S'(q_\ep) ( r_\ep \;\d s + \tilde{r}_\ep \;\d W )\;\d x,
\end{align*}
and insert the definitions of $S_\ell$ 
and $G_S$, and their derivatives.

By inspection, $S_\ell'$ and $S_\ell''$ are 
uniformly bounded on $\R$ for fixed $\ell$, 
so again by Lemma~\ref{thm:mollifying_errors}, 
and Eqs. \eqref{eq:r_ep_estimate}, \eqref{eq:tilder_ep_estimate}, 
and \eqref{eq:rho_ep_estimate}, the last two lines 
of \eqref{eq:pre_L2bound} are bounded by $C_{\ep, T}$, 
where $C_{\ep,T} \to 0$ as $\ep \to 0$.
We then take the limits $\ep \to 0$ and 
$\ell \to \infty$ and use Fatou's lemma in order 
to get the limit energy inequality
\begin{align}
\frac{1}{2}(1 - \alpha) \Ex \int_0^t \int_\R |q|^{2 + \alpha}\;\d x\,\d s
	&\le \Ex\int q|q|^\alpha \;\d x\bigg|_0^t- 
		\frac{1}{2}\alpha (\alpha + 1)\, \Ex \int_0^t \int_\R q|q|^\alpha (\pd_x \sigma )^2\;\d x \,\d s\notag\\
		&\quad + \frac{\alpha}{4}\, \Ex \int_0^t \int_\R \pd_{xx}^2 \sigma^2 q|q|^\alpha\;\d x\,\d s.
\end{align}
for almost every $t \in [0,T]$.
From the second part of this proof, we have the inclusions
$q\in L^2(\Omega\times [0,T] \times \R)$ and
$q\in L^\infty([0,T]; L^2(\Omega\times \R))$. 
This allows us to interpolate between 
$L^2_{t,x}$ and $L^1_{t,x}$ or 
between $L^2_x$ and $L^1_x$ to bound
the integrals on the right, thereby 
allowing us to control 
$\Ex\|q\|_{L^{2 + \alpha}_{x,t}}^{2 + \alpha}$ 
as well.

This concludes the proof.
\end{proof}

\section{The Defect Measure in the Deterministic Setting}\label{sec:appendixB}

Here we construct explicit and easily verifiable 
solutions in the manner of \cite{MR2182833} to a 
problem with step functions as the initial distribution, 
and show explicitly how blow-up and a defect 
measure recording that blow-up, arise.
This is to complement the discussion on
the defect measure in Section \ref{sec:deterministic_background}.

Let $[a,b]$ be evenly partitioned into $n$ intervals, with endpoints $x_i =a +  i(b - a)/n$ for $i = 0,\ldots, n$.

First we approximate $q_0$ by defining
\begin{align}
V^n_{0,i} = \fint_{x_{i - 1}}^{x_{i}} q_0(x) \;\d x,\label{eq:defin_V0}
\end{align}
and setting
\begin{equation*}
q_0^n(x) = \sum_{i = 1}^{n} V^n_{0,i} \mathds{1}_{(x_{i - 1},x_i)}, \qquad q_0^n(b) = V^n_{0,n}.
\end{equation*}

Next we postulate the following characteristics:
\begin{align}\label{eq:Xn_defin}
X^n_i(t) = a + \frac{(b - a)}{4n}\sum_{j = 1}^i 
(2 + V^n_{0,j}t)^2\mathds{1}_{\{t \ge 0  :  2 + V^n_{0,j} t  >  0\}}.
\end{align}

Notice that two characteristics $X_{i - 1}^n$ and 
$X^n_i$ coincide and remain coincident after 
$t = -2/V^n_{0,i}$ if $V^n_{0,i} < 0$.

Setting
\begin{align}
q^n(t,x) &= \sum_{i = 1}^n\frac{2 V^n_{0,i}}{2 + V^n_{0,i}t}\mathds{1}_{\{X^n_{i - 1}(t) < x < X^n_i(t),\, 2 + V^n_{0,i} t > 0\}},\label{eq:qn_defin}\\
u^n(t,x) &= \int_{-\infty}^x q^n(t,y)\;\d y  =\int_{X^n_0(t)}^x q^n(t,y)\;\d y \label{eq:un_defin},
\end{align}
we have by direct substitution of \eqref{eq:un_defin} in \eqref{eq:Xn_defin},
\begin{align*}
\d X^n_i(t) &= u^n(t,X^n_i(t))\;\d t + \sigma \d W.
\end{align*}

For simplicity we set $q^n(t,X^n_i(t)) = 0$ on 
the (finitely many) characteristics $X^n_i$, 
thereby defining $q^n(t)$ pointwise, and so that 
from the definition, if and when two characteristics 
eventually meet, there is no mass concentrated 
along their coincident path. This is the defining 
feature of a dissipative solution --- that $L^2_x$-mass 
is completely and eternally annihilated at wave-breaking
 --- on which we shall expound further below.

From the definition of $q^n$ in \eqref{eq:qn_defin}, we have 
\begin{align}\label{eq:approx_eq}
\pd_tq^n + u^n \pd_x(q^n) = -\frac{1}{2}(q^n)^2
\end{align}
in the sense of distributions --- to wit, from  \eqref{eq:qn_defin}:
\begin{align}
\pd_t(q^n)(t,x) 
	&= \sum_{i = 1}^{n}\bigg[\frac{-2(V^n_{0,i})^2}{(2 + V^n_{0,i}t)^2}
		\mathds{1}_{\{X^n_{i - 1}< x < X^n_i,\, 2 + V^n_{0,i}t > 0\}} \notag \\
		&\quad
			- \frac{2 V^n_{0,i}}{2 + V^n_{0,i}t}\mathds{1}_{\{X^n_{i - 1}(t) < x < X^n_i(t)\}} 
				\delta(2 + V^n_{0,i}t)\label{eq:q_det_box_sq_0} \\
&\quad - \frac{2 V^n_{0,i}}{2 + V^n_{0,i}t}\mathds{1}_{\{2 + V^n_{0,i}t > 0\}} 
	\big(\delta(x - X^n_{i - 1})u^n(t,X^n_{i - 1})  
		- \delta(x - X^n_{i}) u^n(t,X^n_{i})\big)\bigg],\notag\\[2mm]
\pd_x(q^n) (t,x)&= \sum_{i = 1}^{n}\frac{2V^n_{0,i}}{2 + V^n_{0,i}t} 
	\big(\delta(x - X^n_{i - 1}) - \delta(x - X^n_{i })\big)
		\mathds{1}_{\{2 + V^n_{0,i}t > 0\}},\label{eq:q_det_box_sq_1}\\[2mm]
(q^n)^2(t,x)&=  \sum_{i = 1}^{n}\bigg|\frac{2V^n_{0,i}}{2 + V^n_{0,i}t }\bigg|^2
	\mathds{1}_{\{X^n_{i - 1}< x < X^n_i,\,2 + V^n_{0,i}t > 0\}}.
		\label{eq:q_det_box_sq}
\end{align}
The quantity
\begin{equation*}
\frac{2 V^n_{0,i}}{2 + V^n_{0,i}t}
	\mathds{1}_{\{X^n_{i - 1}(t) < x < X^n_i(t)\}} \delta(2 + V^n_{0,i}t) 
\end{equation*}
in the equation for $\pd_t (q^n)(t,x)$ means
\begin{align}\label{eq:notrig_marole}
\lim_{\ep \to 0}\frac{2 V^n_{0,i}}{2 + V^n_{0,i}t}
	\mathds{1}_{\{X^n_{i - 1}(t) < x < X^n_i(t)\}}
		\frac{1}{\ep}\eta\big(\frac1\ep(t + 2/V^n_{0,i} +  \ep)\big),
\end{align}
where $\eta$ is a symmetric smooth bump of unit $L^1$-mass, supported 
on $[0,1]$. The limit is taken in the topology of 
distributions on $[0,T]\times \mathbb{R}$. We can 
interpret the expression thus, as differentiation is 
continuous in the topology of distributions. 
The limit evaluates to nought in the sense of 
distributions because $X^n_i(t)-X^n_{i - 1}(t)$ 
is proportional to $(2 + V^n_{0,i}t)^2$.  
Nevertheless a similar term is enormously important 
in the equation for $\pd_t (q^n)^2$ because  
dissipation arises from this term, 
which characterises dissipative solutions. 

From the expression for the difference 
$X^n_{i}(t) - X^n_{i - 1}(t)$ in \eqref{eq:Xn_defin}, 
and as mentioned there, we see that the difference 
is zero for $2 + V^n_{0,i} t \le 0$. 
Therefore by the expression for $(q^n)^2$, 
\eqref{eq:q_det_box_sq}, we can compute 
that, $\mathbb{P}$-almost surely,
\begin{align}\label{eq:qn_energy}
\int_{\mathbb{R}}(q^n)^2(t,x)\;\d x 
	= \frac{b - a}{n}\sum_{i = 1}^n \mathds{1}_{\{t \ge 0  :  2 + V^n_{0,i}t > 0\}} (V^n_{0,i})^2
		  \le \frac{b - a}{n}\sum_{i = 1}^n(V^n_{0,i})^2 
		  	= \int_{\mathbb{R}} (q^n_0(x))^2\;\d x,
\end{align}
a constant.

We can record the dissipation of 
$\|q^n(t)\|_{L^2(\R)}^2$ as a defect measure:
\begin{align}\label{eq:defect_meas}
\mathrm{m}^n(\d t, \d x) 
&= \sum_{i = 0}^n \frac{b - a}{n}(V^n_{0,i})^2 
	\delta(x - X^n_{i}(t)) \delta(t + V^n_{0,i}/2)\;\d x\,\d t.
\end{align}

From this measure we see that dissipation gives 
rise to the admissibility condition in \cite[Definition 2.2]{MR1361013},
\begin{equation*}
\pd_t (q^n)^2 + \pd_x (u^n (q^n)^2)  =- \frac{\mathrm{m}^n(\d t, \d x)}{\d t\,\d x} \le 0.
\end{equation*}

We carry out this computation explicitly below:
\begin{align*}
\pd_t ( q^n)^2
	&= \sum_{i = 1}^n\Big[ \,\big|\frac{2V^n_{0,i}}{2 + V^n_{0,i}t }\big|^2
		 \big(\delta(x - X^n_{i})  u(X^n_i)  -
		 	 \delta(x - X^n_{i - 1})  u(X^n_{i - 1}) \big) 
		 	 	\mathds{1}_{\{2 + V^n_{0,i}t > 0\}}\\
				&\qquad\qquad  - \bigg(\frac{2V^n_{0,i}}{2 + V^n_{0,i}t }\bigg)^3
	 			\mathds{1}_{\{X^n_{i - 1} < x < X^n_i,\, 2 + V^n_{0,i}t > 0\}}\\
	 			& \qquad\qquad - \frac{(2 V^n_{0,i})^2}{(2 + V^n_{0,i}t)^2}
	 				\mathds{1}_{\{X^n_{i - 1}(t) < x < X^n_i(t)\}} \delta(2 + V^n_{0,i}t)\Big], \\[2mm]
\pd_x(u^n (q^n)^2)
&= u^n \pd_x (q^n)^2 + (q^n)^3\\
&= u^n(x) \sum_{i = 1}^n \Big[\,\big|\frac{2V^n_{0,i}}{2 + V^n_{0,i}t }\big|^2 
	\big(\delta(x - X^n_{i - 1}) - \delta(x - X^n_{i})\big) \\
	&\qquad\qquad + \bigg(\frac{2V^n_{0,i}}{2 + V^n_{0,i}t }\bigg)^3 
		\mathds{1}_{\{X^n_{i - 1} < x < X^n_i,\,2 + V^n_{0,i}t > 0\}}\Big].
\end{align*}
Therefore again with due consideration for the 
difference $X_{i}^n(t) - X^n_{i - 1}(t) = (2 + V^n_{i,0}t)^2  (b - a)/4n$,
\begin{align*}
\pd_t (q^n)^2 + \pd_x (u^n (q^n)^2) 
= - \sum_{i =1}^n \frac{b - a}{n}(V^n_{0,i})^2
	\delta(x - X^n_i(t))\delta(2 + V^n_{0,i}t),
\end{align*}
where we understand the expression
\begin{equation*}
\frac{(2 V^n_{0,i})^2}{(2 + V^n_{0,i}t)^2}
	\mathds{1}_{\{X^n_{i - 1}(t) < x < X^n_i(t)\}} 
		\delta(2 + V^n_{0,i}t) 
\end{equation*}
as in \eqref{eq:notrig_marole} above.

The times at which ($L^2_x$-)mass is released 
from this defect measure and returned to the solution, 
with a necessary corresponding determination of how 
characteristics $X^n_i(t)$ are to be continued past 
$\{t :  2 + V^n_{0,i}t > 0\}$, determines the 
types of solution one seeks. When it is never returned 
(when the indicator function in \eqref{eq:defect_meas} 
attains unity for all $t$ sufficiently great), the 
solutions are ``dissipative''; when the measure 
only retains mass instantaneously, as in 
\cite{MR2653980} in for the similar Camassa--Holm equation, 
solutions are ``conservative''. There are 
uncountably many possibilities between these extremes. 

\bigskip
\bigskip

\bibliographystyle{plain}

\end{document}